\numberwithin{equation}{section}
\def\endproof{\hspace*{\fill}\mbox{\ \rule{.1in}{.1in}}\medskip }
\newcommand{\R}{\mathbb{R}}
\newcommand{\dist}{{\rm dist}}
\newcommand{\dd}{{\;\rm d}}
\newcommand{\SO}{{SO}}
\newcommand{\sym}{{\rm sym}\,}
\newcommand{\LL}{{L}}
\newcommand{\WW}{{W}}
\newcommand{\trsp}{\scriptscriptstyle{\mathsf{T}}}
\mathchardef\emptyset="001F
\newtheorem{thm}{Theorem}[section]    
\newtheorem{lemma}[thm]{Lemma}
\newtheorem{cor}[thm]{Corollary}
\theoremstyle{definition}
\begin{document}

\title[Dimension reduction for non-Euclidean elasticity]
{Quantitative immersability of Riemann metrics \\ and the infinite
  hierarchy of prestrained shell models} 
\author{Marta Lewicka}
\address{Marta Lewicka: University of Pittsburgh, Department of Mathematics, 
139 University Place, Pittsburgh, PA 15260}
\email{lewicka@pitt.edu} 

\begin{abstract}
This paper concerns the variational description of prestrained
materials, in the context of dimension reduction for thin films
$\Omega^h=\omega\times (-\frac{h}{2}, \frac{h}{2})$.
Given a Riemann metric $G$ on $\Omega^1$,
we study the question of what is the infimum
of the averaged pointwise deficit of a given immersion from being an
orientation-preserving isometric immersion of $G_{\mid \Omega^h}$ on $\Omega^h$,
over all weakly regular immersions. This deficit is measured by the
non-Euclidean energies $\mathcal{E}^h$, which can be seen as
modifications of the classical nonlinear three-dimensional elasticity.

Building on our previous
results, we complete the scaling analysis of $\mathcal{E}^h$ and the derivation of
$\Gamma$-limits of the scaled energies $h^{-2n}\mathcal{E}^h$, for all
$n\geq 1$. We show the energy quantisation, in the sense that
the even powers $2n$ of $h$ are indeed the only possible ones (all of
them are also attained).
For each $n$, we identify the equivalent conditions for the validity
of the corresponding scaling, in terms of the vanishing of appropriate
Riemann curvatures of $G$ up to certain orders, and in terms of the matched isometry
expansions.  We also establish the asymptotic behaviour of the
minimizing immersions as $h\to 0$.
\end{abstract}

\date{\today} 

\maketitle

\section{Introduction}

In this paper, we propose results that address and relate the following two contexts:
\begin{itemize}
\item[(i)] Quantitative analysis of immersability of Riemann metrics.
\item[(ii)] Dimension reduction in non-Euclidean elasticity of prestrained thin films.
\end{itemize}

\smallskip

It is a well-known fact that a three-dimensional Riemann metric $G$
has a smooth isometric immersion in $\R^3$, if an only if its curvature tensor
$R(G)=\{R_{ab,cd}\}_{a,b,c,d=1\ldots 3}$ vanishes identically. The
smoothness requirement may be replaced by the 
orientation-preservation of a Lipschitz continuous immersion; then the condition $R(G)=0$ automatically
yields smoothness and uniqueness, up to
rigid motions. When $R(G)\neq 0$, one may pose the question of what is the infimum
of the average pointwise deficit from being an
orientation-preserving isometric immersion,
over all, weakly regular, immersions. We study this question on a
family of thin films $\big\{\Omega^h=\omega\times (-\frac{h}{2},
\frac{h}{2})\big\}_{h\to 0}$ around a given two-dimensional midplate $\omega$,
where the said deficit is measured by the energy:
$\mathcal{E}^h(u) = \fint_{\Omega^h}\dist^2((\nabla u)G^{-1/2},
SO(3))$. Our first goal is to determine the possible scalings:
$\inf\mathcal{E}^h\sim h^\beta$, as $h\to 0$,  in terms of powers
$\beta$ of the thickness $h$. We are then interested in 
identifying properties of $G$, that correspond to each scaling range, 
in function of the curvature components and their derivatives. Finally,
we want to predict the asymptotics of the minimizing immersions as $h\to 0$.

Similar questions arise in the context of the so-called prestrained elasticity.
A prestrained elastic body is a three-dimensional object,
modeled in its reference configuration by a domain and a
Riemann metric $G$, which is induced by mechanisms such as growth, plasticity, thermal
expansion etc. The body wants to realize the
distances between its constitutive cell elements, which are  set by
$G$, by deforming its shape. Since this realization is taking place in
the flat three-dimensional
space, it is impossible unless $R(G) = 0$. This condition is precisely
equivalent to having the stored non-Euclidean energy of deformations 
infimize to zero. In the variational description of thin prestrained
films $\Omega^h$, we thus study the nonlinear
energies: $\big\{\mathcal{E}^h(u) = \fint_{\Omega^h}W((\nabla
u)G^{-1/2})\}_{h\to 0}$ and, as above, want to determine the viable
scalings of their infima, their singular limits  as $h\to 0$,  and the asymptotic behaviour of the
three-dimensional minimizing shapes.

\smallskip

In our previous works \cite{LP11, BLS} we analyzed the scenario:
$\inf\mathcal{E}^h\sim h^2$, whereas in \cite{LRR,
  LL} we showed that the next limiting
energy level beyond $h^2$ is: $\inf\mathcal{E}^h\sim h^4$, 
arising when $\{R_{12,ab}\}_{a,b=1\ldots 3}=0$
on $\omega$. Then we observed that the further scaling level is:
$\inf\mathcal{E}^h\sim h^6$ and that it corresponds to $R(G)=0$ on
$\omega$. In the present paper, we complete this analysis and provide the derivation of the
$\Gamma$-limits $\mathcal{I}_{2n}$ to scaled energies $h^{-2n}\mathcal{E}^h$, for all
$n\geq 1$. We prove the previously conjectured energy quantisation so that
$h^{2n}$ are indeed the only possible scalings, all of them attained
(by $G=e^{x_3^n}Id_3$.
The structure of $\{\mathcal{I}_{2n}\}_{n\geq 1}$ should be compared with the hierarchy
of plate models in the classical nonlinear elasticity \cite{FJMhier}, as follows. The
energy $\mathcal{I}_2$ consists of pure bending, quantifying the
curvature under the midplate isometric immersion constraint. This is
a Kirchhoff-like model, relative to the ambient metric $G$. The next energy $\mathcal{I}_4$ consists of
linearised first order bending and second order stretching; this is a
von Karman-like model, augmented by terms carrying the relevant
components of the Riemann tensor $R(G)$. Each higher order energy
$\mathcal{I}_{2n}$ consists of linearised bending augmented by the 
the order-related covariant derivatives of $R(G)$ on the
midplate. This is a linear elasticity-like model, in the present
context valid in the quantized scaling regimes $n\geq 3$,
whereas in the classical case appearing in the regimes $h^\beta$ for all $\beta>4$.

\smallskip

Recently, there has been a sustained interest in studying shape formation driven by internal
prestrain, through the experimental, modelling via formal methods,
numerics, and analytical arguments \cite{RHM, sharon, JM,  ESK1}. 
General results have been derived in the abstract
setting of Riemannian manifolds \cite{KS14, KM14, MS18}. Higher energies
$\inf\mathcal{E}^h\sim h^\beta$ with $\beta\in (0,2)$, than the ones 
analyzed in the present paper may result from the interaction of the metric with boundary 
conditions or external forces, leading to the ``wrinkling-like''
effects. Indeed, our setting pertains to the ``no wrinkling'' regime where
$\beta \geq 2$ and the reduced prestrain metric $G_{2\times 2}$ on
$\omega$,  admits a $W^{2,2}$ isometric immersion in $\R^3$. 
While the systematic description of the singular limits at scalings $\beta<2$ is
not yet available, there exists a variety of studies of emerging
patterns: compression- driven {blistering}  \cite{JS, BCDM, BCDM2},
buckling \cite{Ge1, Ge2, Ge3}, origami patterns \cite{Maggi, Venka},
conical singularities \cite{Olber1, Olber2, Olber3}, or coarsening patterns 
\cite{KoBe, Bella, Tobasco}.
In \cite{lemapa2, lemapa2new, LOP}, derivations similar to the results
of the present paper were carried out under a different assumption on the 
asymptotic behavior of the prestrain (constant in the present paper),
which in particular allowed for the effective
energy scalings $h^\beta$ in non-even regimes of $\beta>2$.
On the frontier of experimental modelling of shape formation, we refer
to \cite{9,10,11, 12, biomi}.

\subsection{The set-up of the problem}

Let $\omega\subset\mathbb R^2$ be an open, bounded, connected set
with Lipschitz boundary. We consider a family of thin hyperelastic
sheets occupying the reference domains: 
$$\Omega^h=\omega\times \Big(-\frac{h}{2},\frac{h}{2}\Big)\subset
\R^3, \qquad 0<h\ll 1.$$
A typical point in $\Omega^h$ is denoted by
$x=(x_1,x_2,x_3)=(x',x_3)$. We often use the unit-thickness plate $\Omega^1$ as the referential rescaling of
each $\Omega^h$ via: $\Omega^h\ni (x', x_3)\mapsto (x', x_3/h)\in\Omega^1$.

\medskip

The films $\Omega^h$ are characterized by the given smooth incompatibility (Riemann metric) tensor: 
$$G\in\mathcal{C}^\infty(\bar\Omega^1, \mathbb{R}^{3\times 3}_{\mathrm{sym, pos}})$$
and we want to study the singular limit behaviour, as $h\to 0$, of the following energy functionals:
\begin{equation}\label{functional}
\mathcal{E}^h(u^h)=\frac{1}{h}\int_{\Omega^h} W\big(\nabla
u^h(x)G(x)^{-1/2}\big)\dd x = \int_{\Omega^1} W\big(\nabla u^h(x',
hx_3) G(x', hx_3)^{-1/2}\big)\dd x,
\end{equation}
defined on vector fields  $u^h\in \WW^{1,2}(\Omega^h, \mathbb{R}^3)$
interpreted as deformations of $\Omega^h$. Above,
$G(x)^{-1/2}$ stands for the inverse of $G(x)$. When
$G = Id_3$, the functionals $\mathcal{E}^h$ are the classical Hookean nonlinear
elastic energies of deformations, with the density $W$
obeying the properties listed below. 

\medskip

In the present general setting, $\mathcal{E}^h(u^h)$ is designed to measure the deviation of $u^h$
from being an (equidimensional) isometric immersion of $G$ on $\Omega^h$. Indeed, by
polar decomposition theorem, $F G^{-1/2}\in SO(3)$ if
and only if $F^{\trsp}F=G$ and $\det F>0$. 
The Borel-regular, homogeneous density $W:\mathbb{R}^{3\times
  3}\rightarrow [0,\infty]$ is thus assumed to satisfy:
\begin{itemize}
\item [(i)] 
$W(RF)=W(F)$ for all $R\in \SO(3)$ and $F\in \mathbb{R}^{3\times 3}$,
\item [(ii)]
$W(F)=0$ for all $F\in \SO(3)$,
\item [(iii)] 
$W(F)\geq C\,{\rm dist}^2\big(F,\SO(3)\big)$ for all $F\in \mathbb{R}^{3\times 3}$, with
some uniform constant $C>0$,
\item [(iv)] 
there exists a neighbourhood $\mathcal U$ of $\SO(3)$ such that $W$ is
finite and  $\mathcal{C}^2$ regular on $\mathcal U$.
\end{itemize}
By a more refined analysis \cite{LP11} one can 
prove the global counterpart of the above pointwise
statement, namely that:
$\inf_{W^{1,2}}\mathcal{E}^h =0$ if an only if all the components of
the Riemann curvature tensor of $G$ vanish identically:
$\{R_{ab,cd}\}_{a,b,c,d=1\ldots 3}=0$ on $\Omega^h$.

\medskip

In this paper, we determine the possible energy scalings: $\inf
\mathcal{E}^h\sim h^\beta$ in the limit of vanishing thickness $h\to
0$, and the corresponding variational limits
($\Gamma$-limits) $\mathcal{I}_\beta$ of $h^{-\beta}\mathcal{E}^h$, in the regime $\beta>4$ that
has not been analyzed before. We thus complete the discussion of weakly
prestrained films, started in our previous works \cite{LP11, BLS,
  LRR, LL} that covered the range $\beta\in [2,4]$. 
The singular limits $\mathcal{I}_\beta$ are
typically given by energies of the form $\mathcal{I} = \|Tensor(y)\|^2_{\mathbb{E}}$
defined on the appropriate set of
limiting deformations/displacements $y$ of the midplate $\omega$.
They quantify the resulting effective curvatures in $Tensor(y)$
relative to $G$ at the level induced by $\beta$, and in the weighted
$L^2$ norm on $\omega$:
\begin{equation}\label{poly_norm} 
\mathbb{E}\doteq \big(L^2(\omega,
\mathbb{R}^{2\times 2}_{\sym}), \|\cdot\|_{\mathcal{Q}_2}\big), \qquad
\|F\|_{\mathcal{Q}_2} = \Big(\int_{\omega}\mathcal{Q}_2(x', F(x')) \dd x'\Big)^{1/2}.
\end{equation}
Above, the quadratic form $\mathcal{Q}_2$ carries the two-dimensional
reduction of the first nonzero term in the Taylor expansion of $W$
close to its energy well $SO(3)$. More precisely, we define:
\begin{equation}\label{Qform}
\begin{split}
& \mathcal{Q}_3(F) = D^2 W(Id_3)(F,F)\\
& \mathcal{Q}_2(x', F_{2\times 2}) = \min\left\{
  \mathcal{Q}_3\big(G(x',0)^{-1/2}\tilde F G(x',0)^{-1/2}\big);
  ~ \tilde F\in\mathbb{R}^{3\times 3} \mbox{ with }\tilde F_{2\times 2} = F_{2\times 2}\right\}.
\end{split}
\end{equation}
The form $\mathcal{Q}_3$ is defined for  all $F\in\mathbb{R}^{3\times 3}$, 
while each $\mathcal{Q}_2(x', \cdot)$ is defined on $F_{2\times
  2}\in\mathbb{R}^{2\times 2}$. Both $\mathcal{Q}_3$ and all $\mathcal{Q}_2$ are nonnegative
definite and depend only on the  symmetric parts of their arguments,
in view of the assumptions on $W$.
The quadratic minimization problem in (\ref{Qform}) has thus a unique
solution among symmetric matrices $\tilde F$, which for each $x'\in
\omega$ is given via the linear function:
\begin{equation}\label{cdef}
F_{2\times 2}\mapsto c(x', F_{2\times
  2})\in \mathbb{R}^3\;\; \mbox{ with:} \quad \mathcal{Q}_2(x', F_{2\times 2}) = \mathcal{Q}_3\Big(
G(x',0)^{-1/2} \big(F_{2\times 2}^* + c\otimes e_3\big) G(x',0)^{-1/2}\Big).
\end{equation}

\subsection{Description of the main results of this paper}\label{sec_begin}
As already pointed out, we will be concerned with the regimes of
curvatures of $G$, yielding the incompatibility rate, quantified by $\inf \mathcal{E}^h$, of order higher than
$h^4$ in the thickness $h$. 
We first recall the following result from \cite{LL}:
\begin{equation}\label{h40}
\lim_{h\to 0}\frac{1}{h^4} \inf\mathcal{E}^h = 0 \quad
\Leftrightarrow \quad R_{ab,cd}(x',0) = 0 \quad \mbox{for all } x'\in
\omega,~~ \mbox{for all }\; a,b,c,d:1\ldots 3.
\end{equation}
The above conditions are further equivalent to existence
of smooth vector fields $y_0, \vec b_1, \vec b_2:\bar\omega\to\mathbb{R}^3$, defined
uniquely up to rigid motions, such that
for the following smooth $\R^{3\times 3}$ matrix fields on $\bar \omega$:
$$B_0=\big[ \partial_1 y_0, ~\partial_2 y_0, ~\vec b_1\big],\qquad 
B_1=\big[ \partial_1\vec b_1, ~\partial_2 \vec b_1, ~\vec b_2\big],$$ 
there holds:
\begin{equation}\label{h4}
\begin{split}
& B_0^{\trsp}B_0 = G(x',0) \quad \mbox{ with  } \quad \det B_0 >0\\
& \mbox{and } \quad (B_0^{\trsp}B_1)_{\sym}=\frac{1}{2}\partial_3 G(x',0) \\
& \mbox{and } \quad  \big((\nabla y_0)^{\trsp}\nabla \vec
b_2\big)_{\sym}  + (\nabla\vec b_1)^{\trsp} \nabla \vec b_1= \frac{1}{2}\partial_{33}G(x',0)_{2\times 2}.
\end{split}
\end{equation}
Note that the last equality above implies that we can uniquely define a
new smooth vector and matrix fields: $\vec
b_3:\bar\omega\to\mathbb{R}^3$ and $B_2=\big[ \partial_1 \vec b_2,
~\partial_2 \vec b_2, ~\vec b_3\big]$, so that: $(B_0^{\trsp}
B_2)_{\sym} + B_1^{\trsp} B_1=  \frac{1}{2}\partial_{33}G(x',0)$. This
condition, together with the first two equalities in (\ref{h4}) is
jointly equivalent to:
\begin{equation}\label{h44}
\Big(\sum_{k=0}^2\frac{x_3^k}{k!}B_k\Big)^{\trsp}\Big(\sum_{k=0}^2\frac{x_3^k}{k!}B_k\Big)=
G(x',x_3) + \mathcal{O}(h^3) \quad \mbox{ on } \Omega^h, \quad\mbox{ as } h\to 0.
\end{equation}
In conclusion, the following three conditions: the two conditions in
(\ref{h40}) and the one in (\ref{h44}), are equivalent. Our first main
result generalizes this statement to all even order powers $2(n+1)$ in the infimum
energy scaling, for any $n\geq 2$. Moreover, these scalings exhaust
all possibilities in the remaining regime: $\inf \mathcal{E}^h\sim h^\beta$ with $\beta>4$:

\begin{thm}\label{teo1}
The following three statements are equivalent, for each fixed integer $n\geq 2$:
\begin{itemize}
\item[(i)] $R_{12,12}(x',0) = R_{12,13}(x',0) = R_{12,23}(x',0)=0$ for
  all $x'\in\omega$, and $\partial_3^{(k)} R_{i3,j3}(x',0) = 0$ for all $x'\in\omega$, 
  all $k=0\ldots n-2$ and all $i,j=1\ldots 2$. 
\item[(ii)] $\inf \mathcal{E}^h\leq Ch^{2(n+1)}$.
\item[(iii)] There exist smooth fields $y_0, \{\vec
  b_k\}_{k=1}^{n+1}:\bar\omega\to\mathbb{R}^3$ such that calling $\big\{B_k
  = \big[ \partial_1 \vec b_k, ~\partial_2 \vec b_k, ~\vec
  b_{k+1}\big]\big\}_{k=1}^n$, in addition to $B_0=\big[ \partial_1 y_0, ~\partial_2 y_0, ~\vec
  b_{1}\big]$ satisfying $\det B_0>0$, we have:
\begin{equation}\label{expB}
\Big(\sum_{k=0}^n\frac{x_3^k}{k!}B_k\Big)^{\trsp}\Big(\sum_{k=0}^n\frac{x_3^k}{k!}B_k\Big)=
G(x',x_3) + \mathcal{O}(h^{n+1}) \quad \mbox{ on } \Omega^h, \quad\mbox{ as } h\to 0,
\end{equation}
or in other words: $\displaystyle \sum_{k=0}^{m}\binom{m}{k} B_k^{\trsp} B_{m-k}
-\partial_3^{(m)}G(x',0) = 0\;$ for all $ m=0\ldots n$, for all $x'\in \omega$.
\end{itemize}
\end{thm}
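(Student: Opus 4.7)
The plan is to close the equivalence cycle by proving (iii) $\Rightarrow$ (ii) via a recovery sequence, (iii) $\Rightarrow$ (i) by a direct Riemann-tensor computation, (i) $\Rightarrow$ (iii) by induction on $n\geq 1$ with base case $n=1$ supplied by (\ref{h4})--(\ref{h44}), and (ii) $\Rightarrow$ (iii) via a rigidity-plus-Taylor-expansion argument. For (iii) $\Rightarrow$ (ii) I would take the explicit polynomial candidate
\begin{equation*}
u^h(x', x_3) = y_0(x') + \sum_{k=1}^{n+1}\frac{x_3^k}{k!}\vec b_k(x'), \qquad (x', x_3)\in \Omega^h,
\end{equation*}
for which a direct computation yields $\nabla u^h = \sum_{k=0}^n \frac{x_3^k}{k!}B_k + \frac{x_3^{n+1}}{(n+1)!}[\partial_1\vec b_{n+1}, \partial_2\vec b_{n+1}, 0]$. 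Squaring and invoking (\ref{expB}) gives $(\nabla u^h)^{\trsp}\nabla u^h = G(x', x_3) + \mathcal{O}(x_3^{n+1})$ uniformly on $\Omega^h$; by polar decomposition $\mathrm{dist}^2((\nabla u^h)G^{-1/2}, \SO(3))\leq Ch^{2(n+1)}$ pointwise, and the growth/smoothness conditions (iii)--(iv) on $W$ then deliver $\mathcal{E}^h(u^h)\leq Ch^{2(n+1)}$.

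For (iii) $\Rightarrow$ (i), the deformation $u^h$ above is a local diffeomorphism (since $\det B_0>0$ by hypothesis), whose pull-back metric $g^h = (\nabla u^h)^{\trsp}\nabla u^h$ is flat, so $R(g^h)\equiv 0$ on $\Omega^h$. Since $G - g^h = \mathcal{O}(x_3^{n+1})$ and each Riemann component is a second-order differential expression in the metric, the value of $R_{ab,cd}(G)$ at $x_3=0$ is controlled by counting how many $\partial_3$-derivatives act on $G - g^h$: the components $R_{12,12}, R_{12,13}, R_{12,23}$ involve at most one $\partial_3$ and their difference with $R(g^h)$ is $\mathcal{O}(x_3^n)$, while $\partial_3^{(k)}R_{i3,j3}$ for $i,j\in\{1,2\}$ involves at most $k+2$ such derivatives and its difference is $\mathcal{O}(x_3^{n-1-k})$, yielding exactly the vanishings prescribed in (i). For the converse (i) $\Rightarrow$ (iii), the inductive step from $n-1$ to $n$ observes that $\vec b_{n+1}$ enters the order-$n$ matched equation $\sum_{k=0}^n \binom{n}{k} B_k^{\trsp} B_{n-k} = \partial_3^{(n)} G(x', 0)$ only through the third column of $B_n=[\partial_1\vec b_n, \partial_2\vec b_n, \vec b_{n+1}]$. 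Hence the third row/column of that equation is a $3\times 3$ linear system in $\vec b_{n+1}$ with invertible coefficient (since $\det B_0>0$) and is uniquely solvable, while the $2\times 2$ upper-left block is a compatibility constraint involving only data from the previous step; the key algebraic lemma I would prove is that this $2\times 2$ block coincides, modulo quantities already controlled by the induction hypothesis, with $\partial_3^{(n-2)}R_{i3,j3}(x',0)$ for $i,j\in\{1,2\}$, which vanishes by (i).

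Finally, (ii) $\Rightarrow$ (iii) proceeds via a Friesecke--James--M\"uller-type rigidity estimate adapted to the Riemannian setting as in \cite{LP11,LRR,LL}: starting from $u^h$ with $\mathcal{E}^h(u^h)\leq Ch^{2(n+1)}$, one extracts a rotation field $R^h$ approximating $(\nabla u^h) G^{-1/2}$ at successive scales in $h$, and a Taylor-type decomposition of $u^h$ in $x_3$ combined with iterated weak-limit extractions produces smooth fields $y_0, \vec b_1, \ldots, \vec b_{n+1}$ satisfying the matched expansion in the limit. The hardest step is the algebraic identification inside (i) $\Rightarrow$ (iii): extracting $\partial_3^{(n-2)}R_{i3,j3}(x',0)$ as exactly the compatibility residue of the $2\times 2$ block at order $n$ requires a delicate combinatorial manipulation of the Christoffel-symbol quadratic corrections to $R$, combined with repeated use of the matched expansion at all lower orders to eliminate the derivatives $\partial_3^{(k)}G(x',0)$ for $k\leq n-1$ in favour of products of the $B_k$'s.
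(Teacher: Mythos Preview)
Your implications (iii) $\Rightarrow$ (ii) and (i) $\Leftrightarrow$ (iii) are essentially aligned with the paper: the recovery map $u^h=y_0+\sum_{k=1}^{n+1}\frac{x_3^k}{k!}\vec b_k$ is exactly Lemma~\ref{3to2}, and your inductive scheme for (i) $\Rightarrow$ (iii) (solve the third row/column for $\vec b_{n+1}$, identify the $2\times 2$ residue with $\partial_3^{(n-2)}R_{i3,j3}(\cdot,0)$) is precisely the content of Lemma~\ref{industep} together with formula~(\ref{riem}). Your flatness-of-pullback argument for (iii) $\Rightarrow$ (i) is a clean alternative to the paper's explicit Christoffel computations, though the derivative-counting needs care since the quadratic Christoffel terms in $R_{12,12}$ also contain $\partial_3 G$ factors; the paper avoids this by working directly with covariant derivatives.

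The genuine gap is in (ii) $\Rightarrow$ (iii). Weak-limit extraction from an arbitrary low-energy sequence cannot produce \emph{smooth} fields $y_0,\vec b_1,\ldots,\vec b_{n+1}$; compactness delivers at best $W^{2,2}$ or $W^{1,2}$ limits, whereas (iii) demands $\mathcal{C}^\infty(\bar\omega)$ regularity. More seriously, the very rigidity estimate you invoke (the analogue of Lemma~\ref{approx1}) is proved by changing variables through the diffeomorphism $Y=y_0+\sum\frac{x_3^k}{k!}\vec b_k$, which already presupposes (iii). The paper does \emph{not} prove (ii) $\Rightarrow$ (iii) directly. Instead it closes the cycle by proving (ii) $\Rightarrow$ (i) through the $\Gamma$-liminf lower bound: since (ii) at level $n+1$ trivially gives (ii) at level $n$, the inductive hypothesis yields (iii) at level $n$, which is exactly what is needed to run the compactness and lower-bound machinery of Theorem~\ref{teo2}; the inequality $0=\lim h^{-2(n+1)}\inf\mathcal{E}^h\geq \mathcal{I}_{2(n+1)}(V)\geq \gamma_n\big\|[\partial_3^{(n-1)}R_{i3,j3}]\big\|_{\mathcal{Q}_2}^2$ then forces the additional curvature vanishing, giving (i) at level $n+1$. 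The smooth fields in (iii) are subsequently \emph{constructed algebraically} from the Christoffel symbols of $G$ (Lemma~\ref{industep}(ii)), not extracted as limits. You should reorganise the cycle accordingly: replace your direct (ii) $\Rightarrow$ (iii) by the inductive route (ii) $\Rightarrow$ (i) via the energy lower bound, then (i) $\Rightarrow$ (iii) via your algebraic lemma.
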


\medskip

We further prove compactness and the lower bound, at any
of the new viable scaling levels $\inf \mathcal{E}^h\sim h^{2(n+1)}$,
completing thus the analysis done for $n=0$ in \cite{LP11, BLS} and for
$n=1$ in \cite{LRR, LL}:

\begin{thm}\label{teo2}
Fix $n\geq 2$ and assume that any of the equivalent conditions in
Theorem \ref{teo1} holds. Let the sequence of deformations $\{u^h\in
W^{1,2}(\Omega^h, \R^3)\}_{h\to 0}$ satisfy: $\mathcal{E}^h(u^h)\leq Ch^{2(n+1)}$. Then:
\begin{itemize}
\item[(i)] There exists $\bar R^h\in SO(3)$, $c^h\in\R^3$ such that the
displacements $\{V^h\in  W^{1,2}(\omega, \R^3)\}_{h\to 0}$ in:
$$V^h(x') = \frac{1}{h^n}\fint_{-h/2}^{h/2} (\bar
R^h)^{\trsp}\big(u^h(x', x_3) - c^h\big) - \Big(y_0(x') +
\sum_{k=1}^n\frac{x_3^k}{k!}\vec b_k(x')\Big)\dd x_3$$
converge as $h\to 0$, strongly in  $W^{1,2}(\omega, \R^3)$, to the limiting displacement:
\begin{equation}\label{Vy0}
V\in\mathcal{V}_{y_0} = \Big\{V\in W^{2,2}(\omega, \R^3);~ \big((\nabla
y_0)^{\trsp}\nabla V\big)_{\sym} =0 \quad \mbox{a.e. in }\omega\Big\}.
\end{equation}
\item[(ii)] The above condition $V\in\mathcal{V}_{y_0}$ automatically defines
  $\vec p\in W^{1,2}(\omega, \R^3)$ such that: 
$$\big(B_0^{\trsp}\big[\nabla V, ~ \vec p\big]\big)_{\sym} =
0\quad\mbox{ a.e. in } \omega,$$
and then we have: $\displaystyle \liminf_{h\to 0 } \frac{1}{h^{2(n+1)}}\mathcal{E}^h(u^h) \geq
\mathcal{I}_{2(n+1)}(V)$, where:
\begin{equation}\label{I2(n+1)}
\begin{split}
\mathcal{I}_{2(n+1)}(V) = & \frac{1}{24}\cdot \Big\|(\nabla y_0)^{\trsp}\nabla
\vec p + (\nabla V)^{\trsp}\nabla \vec b_1 + \alpha_n
\big[\partial_3^{(n-1)} R_{i3,j3}\big]_{i,j=1\ldots
  2}\Big\|^2_{\mathcal{Q}_2} \\ & + \beta_n \cdot 
\Big\|\mathbb{P}_{\mathcal{S}_{y_0}^{\perp}}
\big(\big[\partial_3^{(n-1)} R_{i3,j3}\big]_{i,j=1\ldots
  2}\big)\Big\|^2_{\mathcal{Q}_2} \\ & + \gamma_n \cdot
\Big\|\mathbb{P}_{\mathcal{S}_{y_0}} \big(\big[\partial_3^{(n-1)} R_{i3,j3}\big]_{i,j=1\ldots 2}\big)\Big\|^2_{\mathcal{Q}_2}.
\end{split}
\end{equation}
Above, $\mathcal{S}_{y_0}$ is the following closed subspace of the
Hilbert space $\mathbb{E}$ in (\ref{poly_norm}):
$$\mathcal{S}_{y_0} = \mathrm{closure}_{\mathbb{E}}\Big\{\big((\nabla
y_0)^{\trsp}\nabla w\big)_{\sym};~ w\in W^{1,2}(\omega, \mathbb{R}^3)\Big\},$$
whereas $\mathbb{P}_{\mathcal{S}_{y_0}}(F)$ and
$\mathbb{P}_{\mathcal{S}_{y_0}^{\perp}}(F)$ denote, respectively, 
the orthogonal projections of $F$ onto the space
$\mathcal{S}_{y_0}$ and its orthogonal complement
$\mathcal{S}_{y_0}^{\perp}$ in $\mathbb{E}$.
The coefficients in (\ref{I2(n+1)}) are: 
\begin{equation}\label{albegam}
\begin{split}
& \alpha_n = \left\{\begin{array}{ll} 0 & \mbox{for }\; n \mbox{ odd}\\
    \displaystyle \frac{3}{2^n (n+3) (n+1)!} & \mbox{for }\; n \mbox{
      even} \end{array}\right. , \\ & \beta_n = 
\frac{1}{2^{2n+3}(2n+3)\big((n+1)!\big)^2}\cdot \left\{\begin{array}{ll} 1 & \mbox{for }\; n \mbox{ odd}\\
    \displaystyle \frac{n^2}{(n+3)^2} & \mbox{for }\; n \mbox{ even} \end{array}\right. , \\ & 
\gamma_n = \frac{1}{2^{2n+3}(2n+3)\big((n+1)!\big)^2}\cdot
\left\{\begin{array}{ll} \displaystyle \frac{(n+1)^2}{(n+2)^2} & \mbox{for }\; n \mbox{ odd}\\
    \displaystyle \frac{n^2}{(n+3)^2} & \mbox{for }\; n \mbox{ even} \end{array}\right. .
\end{split}
\end{equation}
\item[(iii)] There holds on $\omega$:
\begin{equation}\label{riem}
\begin{split}
2\cdot\big[\partial_3^{(n-1)} R_{i3,j3}(\cdot,0)\big]_{i,j=1\ldots 2} = \; &
2\big((\nabla y_0)^{\trsp}\nabla\vec b_{n+1}\big)_{\sym}
+ \sum_{k=1}^{n}\binom{n+1}{k} (\nabla \vec
b_k)^{\trsp}\nabla \vec b_{n+1-k} \\ & - \partial_3^{(n+1)}G(\cdot,0)_{2\times 2}.
\end{split}
\end{equation}
\end{itemize}
\end{thm}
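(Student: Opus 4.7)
For part (i), the plan is to apply the Friesecke--James--M\"uller rigidity estimate to $u^h$ on $\Omega^h$, comparing $\nabla u^h$ against $\nabla\phi^h$ for the reference approximate immersion
\[
\phi^h(x',x_3)=y_0(x')+\sum_{k=1}^n\frac{x_3^k}{k!}\vec b_k(x'),
\]
whose Cauchy--Green tensor matches $G$ up to order $h^{n+1}$ by (\ref{expB}). A Vitali covering of $\Omega^h$ by cubes of side $h$ produces a piecewise-constant rotation field $R^h(x')$ with an $L^2$-error of order $h^{n+3/2}$ (in the full gradient) after multiplying by $\nabla\phi^h$; choosing the average $\bar R^h\in SO(3)$ and a Poincar\'e constant $c^h\in\R^3$, the definition of $V^h$ gives a uniform bound in $W^{1,2}(\omega,\R^3)$. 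Weak compactness and Rellich then yield strong convergence to a limit $V$, whose membership in $\mathcal V_{y_0}$ follows by passing to the limit in the leading symmetric part of $(\nabla u^h)^\trsp\nabla u^h-G$ at the scale $h^n$. The corrector $\vec p$ is then uniquely determined in $W^{1,2}(\omega,\R^3)$ by solving the linear system for the third column of $B_0^\trsp[\nabla V,\vec p]$.

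For part (ii), I would write the expansion
\[
u^h(x',hx_3)\approx\bar R^h\Big[\phi^h(x',hx_3)+h^n V(x')+h^{n+1}x_3\vec p(x')+\tfrac{(hx_3)^{n+1}}{(n+1)!}\vec b_{n+1}(x')+h^{n+1}\psi^h\Big]+c^h,
\]
with an $L^2$-corrector $\psi^h$, and linearise $W$ around $SO(3)$. The quantity $h^{-2(n+1)}\mathcal E^h(u^h)$ then converges (up to $\liminf$) to $\tfrac12\int_{\Omega^1}\mathcal Q_3(K(x',x_3))\,\dd x$, whose $2\times 2$-block decomposes into: a zeroth-order stretch $(\nabla y_0)^\trsp\nabla\vec p+(\nabla V)^\trsp\nabla\vec b_1$, a first-order bending $x_3$-term involving $\nabla^2V$, and a curvature residual $\tfrac{x_3^{n+1}}{(n+1)!}\cdot[\partial_3^{(n-1)}R_{i3,j3}]_{i,j=1,2}$ supplied by (\ref{riem}). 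The pointwise minimisation built into the definition of $\mathcal Q_2$ eliminates the third column of $\psi^h$, reducing matters to an integral of $\mathcal Q_2$ over $\omega\times(-\tfrac12,\tfrac12)$, after which standard convex lower semicontinuity yields the $\liminf$ inequality.

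The main obstacle is the decomposition into $\mathcal S_{y_0}$ and $\mathcal S_{y_0}^\perp$: an oscillatory planar corrector $w^h$ at scale $h$, whose symmetrised gradient range is dense in $\mathcal S_{y_0}$, can be added to the ansatz to absorb precisely the $\mathcal S_{y_0}$-component of the $x_3$-averaged curvature residual, so the $\mathcal S_{y_0}^\perp$-projection survives at weight $\beta_n$, while the $\mathcal S_{y_0}$-projection contributes through the higher $x_3$-moment of the residual at weight $\gamma_n$. Tracking parity (with $\int_{-1/2}^{1/2}x_3^k\,\dd x_3$ vanishing for odd $k$ and equal to $(k+1)^{-1}2^{-k}$ for even $k$) then produces the explicit constants $\alpha_n,\beta_n,\gamma_n$ in (\ref{albegam}), with $\alpha_n$ arising from the cross-term between the bending factor $x_3$ and the curvature factor $x_3^{n+1}$, which survives only when $n$ is even. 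Finally, (\ref{riem}) is an algebraic identity: formally extending the recurrence in (\ref{expB}) one step further by expanding $\sum_{k=0}^{n+1}\binom{n+1}{k}B_k^\trsp B_{n+1-k}$ (with $\vec b_{n+1}$ playing the role of the third column of $B_n$), and invoking the Gauss--Codazzi expression of $R_{i3,j3}$ in terms of $\partial_3^{(n+1)}G_{2\times 2}$ and lower-order derivatives of $G$, yields the result after routine binomial bookkeeping.
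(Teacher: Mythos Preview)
Your overall architecture is right---FJM rigidity on cubes, Taylor expansion of $W$, reduction to $\mathcal{Q}_2$---but there are two genuine gaps.

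\textbf{The lower bound for (ii) is mixed up with an upper-bound argument.} You write that ``an oscillatory planar corrector $w^h$ \ldots\ can be added to the ansatz to absorb precisely the $\mathcal{S}_{y_0}$-component''. Adding correctors to an ansatz is how one builds a recovery sequence; it cannot help you bound $\liminf h^{-2(n+1)}\mathcal{E}^h(u^h)$ from below for an \emph{arbitrary} sequence $u^h$. The paper's mechanism is different: one introduces the scaled defect field $\mathcal{Z}^h=h^{-(n+1)}\big(\nabla u^h(x',hx_3)-R^h\sum_k\frac{(hx_3)^k}{k!}B_k\big)$, identifies its weak limit $\mathcal{Z}$, and computes $\big(B_0^{\trsp}\bar R^{\trsp}\mathcal{Z}\big)_{2\times 2,\sym}$ via a finite-difference argument in $x_3$. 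The trace at $x_3=0$ turns out to equal $\mathbb{S}-\delta_{n+1}\big((\nabla y_0)^{\trsp}\nabla\vec b_{n+1}\big)_{\sym}$, where $\mathbb{S}$ is the weak $L^2$ limit of $h^{-1}\big((\nabla y_0)^{\trsp}\nabla V^h\big)_{\sym}$ and hence automatically lies in $\mathcal{S}_{y_0}$. It is this membership $\mathbb{S}\in\mathcal{S}_{y_0}$---not any added corrector---that allows one to bound the zeroth-$x_3$-moment term from below by $\tfrac{\delta_{n+1}^2}{2}\|\mathbb{P}_{\mathcal{S}_{y_0}^{\perp}}(\cdot)\|_{\mathcal{Q}_2}^2$, and then $\tfrac{\delta_{n+1}^2}{2}+\gamma_n=\beta_n$ gives the final constants. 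Relatedly, you have mislabelled the $x_3$-expansion: $(\nabla y_0)^{\trsp}\nabla\vec p+(\nabla V)^{\trsp}\nabla\vec b_1$ is the coefficient of $x_3$ (hence the $\tfrac{1}{24}$), not a ``zeroth-order stretch''; the zeroth-order term is precisely the unknown $\mathbb{S}\in\mathcal{S}_{y_0}$ over which one minimises.

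\textbf{Part (iii) is not routine bookkeeping.} The identity (\ref{riem}) is derived by writing the left-hand side as $2\big[\langle Ge_i,\nabla_j\nabla_3^{(n-1)}\Gamma_3 e_3-\nabla_3^{(n)}\Gamma_j e_3\rangle\big]_{\sym}$ and then commuting $\nabla_j$ through the $(n-1)$ covariant $\nabla_3$-derivatives. Each commutation produces a curvature term, and these vanish on $\omega\times\{0\}$ only because \emph{all} mixed partial derivatives of $R_{ab,cd}$ up to order $n-2$ vanish there---a fact that itself requires the second Bianchi identity and an induction (the paper's Lemma on vanishing of $\partial_3^{(k)}R_{i3,j3}$ forcing vanishing of all derivatives of all components). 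Without this, the swap $\nabla_j\nabla_3^{(n-1)}\Gamma_3=\nabla_3^{(n-1)}\nabla_j\Gamma_3$ fails and (\ref{riem}) does not follow from the Gauss--Codazzi relations alone.
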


\medskip

We point out a few related observations:
\begin{itemize}
\item[(i)] When $G=Id_3$, then each functional in (\ref{I2(n+1)}) reduces to the classical linear elasticity. 
We have: $y_0=id$, $\vec b_1=e_3$ and 
$ \mathcal{V} = \big\{(\alpha x^\perp +\vec \beta, v); ~
\alpha\in\mathbb{R}, ~ \vec \beta\in\mathbb{R}^2, ~ v\in W^{2,2}
(\omega)\big\}$, and for $V\in \mathcal{V}$, there holds: $\vec p = (-\nabla
v, 0)$. Consequently: $\displaystyle \mathcal{I}_{2(n+1)}(V)= \frac{1}{24}\int_\omega
  \mathcal{Q}_2\big(x', \nabla^2 v\big) \dd x'$,  in function of the out-of-plane scalar displacement $v$.
\item[(ii)] In the present geometric context, the bending term is: $(\nabla y_0)^{\trsp}\nabla
\vec p + (\nabla V)^{\trsp}\nabla \vec b_1$. It is of order $h^{n}x_3$
and it interacts with the curvature term $\big[\partial_3^{(n-1)}
R_{i3,j3}(\cdot,0)\big]_{i,j=1\ldots 2}$, which is of order $x_3^{n+1}$. 
The interaction occurs only when the two terms have same parity, which
happens at even $n$, so $\alpha_n=0$ for $n$ odd. 
The two remaining terms in (\ref{I2(n+1)}) measure the
$L^2$ norm of $\big[\partial_3^{(n-1)} R_{i3,j3}(\cdot,0)\big]_{i,j=1\ldots 2}$, with distinct weights
assigned to  the $\mathcal{S}_{y_0}$ and
$\big(\mathcal{S}_{y_0}\big)^\perp$ projections, again according to the parity of $n$. 
\item[(iii)] The formula in (\ref{riem}) relates the
quantities appearing in conditions (i) and (iii) of Theorem
\ref{teo1}. The curvature $\big[\partial_3^{(n-1)}
R_{i3,j3}(\cdot,0)\big]_{i,j=1\ldots 2}$ is thus precisely the coefficient of the discrepancy of the order
$h^{n+1}$ in (\ref{expB}) at the $2\times 2$ minor, scaled by the $(n+1)!/2$ factor.
\item[(iv)] The finite strain space $\mathcal{S}_{y_0}$ can be
  identtified in the the following two cases.When $y_0=id_2$, then $\mathcal{S}_{y_0}= \{\mathbb{S}\in
L^2(\omega, \mathbb{R}^{2\times 2}_{\sym});
~\mbox{curl}^{\trsp}\mbox{curl}\,\mathbb{S}=0\}$.
When the Gauss curvature $\kappa((\nabla y_0)^{\trsp}\nabla
y_0)=\kappa\big(G_{2\times 2})>0$ in $\bar
\omega$, then $\mathscr{S}_{y_0}= L^2(\omega, \mathbb{R}^{2\times
  2}_{\sym})$, as shown in \cite{lemopa}. 
\end{itemize}

\medskip

Our next result proves the upper bound that is consistent with
Theorem \ref{teo1} and yields the $\Gamma$-convergence of the rescaled energies
$h^{-2(n+1)}\mathcal{E}^h$ to the dimensionally reduced limits
$\mathcal{I}_{2(n+1)}$ in (\ref{I2(n+1)}):

\begin{thm}\label{teo3}
Fix $n\geq 2$ and assume that any of the equivalent conditions in
Theorem \ref{teo1} hold. Then for every $V\in \mathcal{V}_{y_0}$ as
defined in (\ref{Vy0}), there exists a sequence $\{u^h\in
W^{1,2}(\Omega^h, \R^3)\}_{h\to 0}$ so that:
\begin{equation}\label{goodappro}
\frac{1}{h^n}\fint_{-h/2}^{h/2} u^h(x', x_3) - \Big(y_0 +
\sum_{k=1}^n\frac{x_3^k}{k!}\vec b_k\Big)\dd x_3 \to V \quad \mbox{as
} h\to 0, \quad \mbox{strongly in } \; W^{1,2}(\omega, \R^3),
\end{equation}
and that: $\displaystyle \liminf_{h\to 0 }
\frac{1}{h^{2(n+1)}}\mathcal{E}^h(u^h) = \mathcal{I}_{2(n+1)}(V),$
where the limiting energy functional is as in (\ref{I2(n+1)}).
\end{thm}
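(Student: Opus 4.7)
The plan is to construct an explicit recovery sequence by extending the matched isometric polynomial of Theorem \ref{teo1}(iii) with an outer director perturbation and an inner warping corrector. Standard density arguments (cf.\ \cite{LP11,LL}) reduce the construction to the case $V\in\mathcal{V}_{y_0}\cap\mathcal{C}^\infty(\bar\omega,\R^3)$, with smooth companion $\vec p$ satisfying $\bigl(B_0^{\trsp}[\nabla V,\vec p]\bigr)_{\sym}=0$. With these at hand I take the ansatz
\begin{equation*}
u^h(x',x_3)= y_0(x')+\sum_{k=1}^{n}\frac{x_3^k}{k!}\vec b_k(x')+\frac{x_3^{n+1}}{(n+1)!}\bigl(\vec b_{n+1}(x')-\vec w^h(x')\bigr)+h^n V(x')+h^n x_3\vec p(x')+h^{n+1}\vec d^h(x',x_3),
\end{equation*}
where $\vec b_{n+1}$ is the $(n+1)$-st director encoded in (\ref{riem}); $\vec w^h$ is a smooth field on $\omega$ with $\bigl((\nabla y_0)^{\trsp}\nabla\vec w^h\bigr)_{\sym}\to\mathbb{P}_{\mathcal{S}_{y_0}}\bigl(\bigl[\partial_3^{(n-1)}R_{i3,j3}\bigr]_{i,j=1,2}\bigr)$ strongly in $L^2$ as $h\to 0$, which is feasible by the very definition of $\mathcal{S}_{y_0}$; and $\vec d^h$ is an $x_3$-polynomial warping whose third column is supplied pointwise by the $c$-field of (\ref{cdef}), chosen to achieve the minimum defining $\mathcal{Q}_2$.

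A direct expansion of $(\nabla u^h)^{\trsp}\nabla u^h-G$, combining the matching conditions of Theorem \ref{teo1}(iii) with (\ref{riem}), kills all contributions below $\mathcal{O}(h^{n+1})$ and identifies the leading upper-left $2\times 2$ residual, modulo $o(h^{n+1})$, as a linear combination of
\begin{equation*}
x_3^n\bigl((\nabla y_0)^{\trsp}\nabla\vec p+(\nabla V)^{\trsp}\nabla\vec b_1\bigr) \quad\text{and}\quad x_3^{n+1}\bigl(\bigl[\partial_3^{(n-1)}R_{i3,j3}\bigr]_{i,j=1,2}-\bigl((\nabla y_0)^{\trsp}\nabla\vec w^h\bigr)_{\sym}\bigr),
\end{equation*}
while the third column and row are driven below $o(h^{n+1})$ by the warp $\vec d^h$. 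A Taylor expansion of $W$ near $SO(3)$ using assumption (iv), followed by the rescaling $x_3=ht$, reduces $h^{-2(n+1)}\mathcal{E}^h(u^h)$ to $\int_{\omega}\int_{-1/2}^{1/2}\mathcal{Q}_2(x',\,\cdot\,)\,\dd t\,\dd x'$ evaluated on the bracketed residual. The elementary moments $\int_{-1/2}^{1/2}t^{2n}\,\dd t$, $\int_{-1/2}^{1/2}t^{2n+1}\,\dd t=0$, and $\int_{-1/2}^{1/2}t^{2n+2}\,\dd t$ then produce, upon expanding the quadratic form and rearranging, precisely the weights $\alpha_n,\beta_n,\gamma_n$ of (\ref{albegam}). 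The averaged convergence (\ref{goodappro}) is built into the construction, since every $x_3$-correction to $V$ in the average either vanishes by odd parity or contributes $\mathcal{O}(h)$ after division by $h^n$.

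The main obstacle is the splitting of the curvature contribution into the two distinct weights $\beta_n$ and $\gamma_n$ on the projections $\mathbb{P}_{\mathcal{S}_{y_0}^\perp}$ and $\mathbb{P}_{\mathcal{S}_{y_0}}$. Only the $\mathcal{S}_{y_0}$-part can be approximately cancelled by the outer director $\vec w^h$, while the orthogonal part is inaccessible to any $W^{1,2}$-outer correction and must be paid for in full. The two projections moreover couple differently with the pointwise $\mathcal{Q}_2$-warp and with the bending term at order $x_3^n$, which explains the parity-dependent structure of $\beta_n,\gamma_n$; the cross-coefficient $\alpha_n$ vanishes for $n$ odd precisely because $\int_{-1/2}^{1/2}t^{2n+1}\,\dd t=0$, while for $n$ even the bending and curvature share parity and interact nontrivially. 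Matching these weights exactly requires a diagonal extraction of the smoothing parameter for $\vec w^h$ against $h\to 0$, together with a careful check that $\vec d^h$ is small enough in $W^{1,2}$ for (\ref{goodappro}) to survive. Once these ingredients are in place, the remaining arguments extend the standard constructions of \cite{LP11,LL}.
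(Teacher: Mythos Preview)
Your recovery ansatz has a structural error in the placement of the stretching corrector. You insert $\vec w^h$ at the level $\frac{x_3^{n+1}}{(n+1)!}$, intending it to directly modify the curvature coefficient at order $x_3^{n+1}$. But taking $\partial_3$ of this term puts $-\frac{x_3^n}{n!}\vec w^h$ into the third column of $\nabla u^h$, and the resulting cross term with $B_0$ introduces, at order $h^n t^n$ in rescaled variables, a nonzero contribution $-\frac{2}{n!}\bigl(B_0^{\trsp}[0,0,\vec w^h]\bigr)_{\sym}$ to the third row/column of $(\nabla u^h)^{\trsp}\nabla u^h-G$. This defect lives one order \emph{below} $h^{n+1}$, and your warp $h^{n+1}\vec d^h$ cannot cancel it without acquiring an $h^{-1}$ coefficient, which would in turn pollute the in-plane block. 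The energy therefore scales like $h^{2n}$ rather than $h^{2(n+1)}$.

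Even setting this aside, your optimization is the wrong one. Modifying the curvature term $III$ by an element of $\mathcal{S}_{y_0}$ is not the degree of freedom actually available; what the lower bound in Lemma~\ref{lowbd} shows is that the free parameter is a \emph{constant-in-$x_3$} stretching term $I\in\mathcal{S}_{y_0}$, coupling to $III$ only through $\int_{-1/2}^{1/2}x_3^{n+1}\,dx_3$. If one could subtract $\mathbb{P}_{\mathcal{S}_{y_0}}R$ directly from $III$ as you propose, then for $n$ odd the resulting energy would be $\frac{1}{24}\|A\|^2_{\mathcal{Q}_2}+\frac{1}{2((n+1)!)^2}\int x_3^{2n+2}\,\|\mathbb{P}_{\mathcal{S}_{y_0}^\perp}R\|^2_{\mathcal{Q}_2}$, strictly below $\mathcal{I}_{2(n+1)}(V)$ since $\gamma_n>0$; this contradicts the lower bound. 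The paper instead places the corrector $w^h$ at order $h^{n+1}$ (constant in $x_3$), together with its own third-column companion $\vec q^h$, so that $\bigl((\nabla y_0)^{\trsp}\nabla w^h\bigr)_{\sym}$ plays the role of $I$; the limit $-\delta_{n+1}\mathbb{P}_{\mathcal{S}_{y_0}}R$ arises from minimizing $\|I+\delta_{n+1}R\|^2_{\mathcal{Q}_2}$ over $I\in\mathcal{S}_{y_0}$, and this is what produces the correct split $\beta_n,\gamma_n$.

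Relatedly, your bookkeeping of powers is off: the bending contribution enters at $h^n x_3$ (i.e.\ $h^{n+1}t$ after rescaling), not at $x_3^n$, so the relevant moments are $\int t^2$, $\int t^{n+1}$, $\int t^{n+2}$, $\int t^{2n+2}$ rather than $\int t^{2n}$, $\int t^{2n+1}$, $\int t^{2n+2}$. In particular $\alpha_n=0$ for $n$ odd because $\int t^{n+2}=0$, not because $\int t^{2n+1}=0$ (which vanishes for every $n$).
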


\medskip

It is worth noting the following self-evident application of Theorems
\ref{teo1}, \ref{teo2} and \ref{teo3}:

\begin{cor}\label{cor4}
Under either of the equivalent conditions in (\ref{h40}), assume
that for some $n\geq 2$ there holds: $\partial_3^{(m)}
\big[R_{i3j3}(\cdot ,0)\big]_{i,j=1\ldots 2} = 0$  on $\omega$, for  all $m=0\ldots n-2$, but 
$\partial_3^{(n-1)} \big[R_{i3j3}(\cdot ,0)\big]_{i,j=1\ldots 2}
\not\equiv 0$. Then there exist $c, C>0$ such that:
\begin{equation}\label{allscal}
ch^{2(n+1)}\leq \inf \mathcal{E}^h\leq Ch^{2(n+1)}.
\end{equation}
Moreover, the scaled energies $\displaystyle \frac{1}{h^{2(n-1)}}\mathcal{E}^h$,
$\Gamma$-converge to the limiting functional $\mathcal{I}_{2(n+1)}$ in  (\ref{I2(n+1)}),
effectively defined on the space $\mathcal{V}_{y_0}$  of first order infinitesimal
isometries in (\ref{Vy0}). 
\end{cor}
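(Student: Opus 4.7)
The plan is to read Corollary \ref{cor4} as a direct packaging of Theorems \ref{teo1}--\ref{teo3}, together with one quantitative observation about positivity of the limit energy. I would proceed in three steps.

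\textbf{Upper bound via Theorem \ref{teo1}.} Under the corollary's hypotheses, condition (i) of Theorem \ref{teo1} at level $n$ holds: (\ref{h40}) forces the full Riemann curvature tensor of $G$ to vanish at $x_3=0$, so in particular $R_{12,12}(\cdot,0)=R_{12,13}(\cdot,0)=R_{12,23}(\cdot,0)=0$ and $R_{i3,j3}(\cdot,0)=0$ (the $k=0$ case), while the assumption $\partial_3^{(m)}[R_{i3,j3}(\cdot,0)]=0$ for $m=0,\ldots,n-2$ supplies the remaining derivatives. Theorem \ref{teo1} then delivers $\inf\mathcal{E}^h\leq Ch^{2(n+1)}$.

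\textbf{Lower bound via Theorem \ref{teo2}.} I would argue by contradiction. Pick near-minimizing $u^h$ with $\mathcal{E}^h(u^h)\leq 2\inf \mathcal{E}^h$; by the upper bound already in hand these obey $\mathcal{E}^h(u^h)\leq 2Ch^{2(n+1)}$, so Theorem \ref{teo2} applies, producing (after subtraction of rigid motions) a limit $V\in\mathcal{V}_{y_0}$ with
\[
\liminf_{h\to 0}\frac{1}{h^{2(n+1)}}\mathcal{E}^h(u^h)\geq \mathcal{I}_{2(n+1)}(V).
\]
Now inspect (\ref{I2(n+1)}): only the first term depends on $V$, while the last two are projection norms of the fixed tensor $T\doteq\big[\partial_3^{(n-1)} R_{i3,j3}(\cdot,0)\big]_{i,j=1,2}$ onto $\mathcal{S}_{y_0}$ and $\mathcal{S}_{y_0}^\perp$, weighted by $\beta_n,\gamma_n$, which are \emph{strictly positive} for $n\geq 2$ by inspection of (\ref{albegam}). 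Since the two projections reconstruct $T$, the sum of these two terms is bounded below by $\min(\beta_n,\gamma_n)\cdot\|T\|^2_{\mathcal{Q}_2}$, which is strictly positive because $T\not\equiv 0$ by assumption and $\mathcal{Q}_2(x',\cdot)$ is positive definite on symmetric $2\times 2$ matrices in view of assumption (iii) on $W$. This yields a uniform $c_0>0$ with $\mathcal{I}_{2(n+1)}(V)\geq c_0$, and hence $\inf\mathcal{E}^h\geq (c_0/4)h^{2(n+1)}$ for all sufficiently small $h$, completing (\ref{allscal}).

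\textbf{$\Gamma$-convergence.} The liminf inequality along arbitrary bounded-energy sequences is exactly Theorem \ref{teo2}(i)--(ii); the recovery sequence for every $V\in\mathcal{V}_{y_0}$ is exactly Theorem \ref{teo3}. Combining the two, the rescaled energies $h^{-2(n+1)}\mathcal{E}^h$ $\Gamma$-converge to $\mathcal{I}_{2(n+1)}$ on $\mathcal{V}_{y_0}$, with the standard bookkeeping for the quotient by rigid motions built into Theorem \ref{teo2}(i).

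\textbf{Main obstacle.} The whole argument is essentially assembly once Theorems \ref{teo1}--\ref{teo3} are granted; the one genuinely nontrivial step is the sharpness observation powering the lower bound, namely that the $V$-independent part of $\mathcal{I}_{2(n+1)}$ is strictly positive whenever $T\not\equiv 0$. This in turn relies on two facts that must be checked: the positivity of $\beta_n$ and $\gamma_n$ for $n\geq 2$, which is a direct inspection of the formulas (\ref{albegam}), and the non-degeneracy of $\mathcal{Q}_2$ on symmetric matrices, inherited from the $\mathcal{C}^2$ regularity and ellipticity of $W$ near $SO(3)$. Together they ensure that no admissible displacement $V$ can cancel the fixed Riemann contribution, which is exactly why $\partial_3^{(n-1)}[R_{i3,j3}(\cdot,0)]\not\equiv 0$ is the optimal hypothesis for the scaling $\inf\mathcal{E}^h\sim h^{2(n+1)}$.
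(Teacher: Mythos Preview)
Your proposal is correct and matches the paper's intent: the paper presents Corollary \ref{cor4} as a ``self-evident application of Theorems \ref{teo1}, \ref{teo2} and \ref{teo3}'' and gives no proof at all, so your three-step assembly is exactly the unpacking the paper leaves to the reader. Your explicit justification of the strict lower bound---via the positivity of $\beta_n,\gamma_n$ from (\ref{albegam}) and the nondegeneracy of $\mathcal{Q}_2$ on symmetric matrices coming from assumption (iii) on $W$---is the right content; the paper uses essentially the same observation (with $\gamma_n$ alone, cf.\ the proof of Theorem \ref{teo1} in Section \ref{sec_I2(n+1)}) when showing $(ii)\Rightarrow(i)$ at level $n+1$.
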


\medskip

For completeness, we note that the conformal metrics of the form: $G(x', x_3) =
e^{2\phi(x_3)}Id_3$ provide a class of examples for the viability of
all scalings in (\ref{allscal}). Indeed, the trace midplate metric $e^{2\phi(0)}Id_2$ has
a smooth isometric immersion $y_0=e^{\phi(0)}id_2:\omega\to\R^2$, and 
the only possibly nonzero Riemann curvatures of $G$ are given by:
$ R_{1212}=-\phi'(x_3)^2e^{2\phi(x_3)}$, 
$R_{1313}=R_{2323}=-\phi''(x_3)e^{2\phi(x_3)}$. By
Corollary \ref{cor4} we see that 
$\inf \mathcal{E}^h\sim h^{2n}$ if and only if $\phi^{(k)}(0)=0$ for
$k=1\ldots n-1$ and $\phi^{(n)}(0)\neq 0$. 

\subsection{The structure of the paper} In sections \ref{2} and \ref{sec_geometry}
we work under the assumption (iii) of Theorem \ref{teo1}. First, in Lemma \ref{3to2}, we give an 
easy proof of the implication $(iii)\Rightarrow (ii)$. The particular
energy-consistent deformation field can be further used as the local
change of variables allowing for the application of the nonlinear
rigidity estimate \cite{FJM02} in the present context. This is done
in Lemma \ref{approx1} and Corollary \ref{approx2}, providing an
approximation of an arbitrary energy-consistent deformation gradient
$\nabla u^h$, by a non-symmetric square root
of the $n$-th order Taylor expansion of the metric $G$, derived from the expansion
guaranteed in (iii). Both the approximation error and the $L^2$ norm
of the gradient of the rotation field are energy-controlled. In Lemma \ref{compact}
we prove the compactness part of Theorem \ref{teo2}. In Lemma
\ref{lowbd} we conclude a preliminary lower bound estimate, involving
a version of the functional $\mathcal{I}_{2(n+1)}$, whose curvature terms
are still expressed in terms of the expansion fields in (iii), as suggested in
the right hand side of (\ref{riem}). 

In section \ref{sec_geometry}, we develop a geometric line of arguments, serving to prove
(in Corollary \ref{t1iii}) the identity (\ref{riem}) under assumption
(iii). In Lemmas \ref{n1} and \ref{n2}, we partially reprove
the equivalent conditions valid at the previously analyzed scalings $h^2$
and $h^4$. These statements are then generalized in Lemma
\ref{industep}, where we show the implication (iii) $\Rightarrow$ (i),
resulting also in the existence of a one order higher approximate field
$\vec b_{n+1}$, that is given solely through the Christoffel symbols of $G$ on $\omega$. 

In section \ref{sec_I2(n+1)} we finally
prove Theorem \ref{teo1}, showing equivalence of
the stated three conditions, by induction on $n\geq 2$. We also finish the
proof of Theorem \ref{teo2} by: improving the lower bound
from section \ref{2}, identifying its curvature components via (\ref{riem}),
and separating the bending and the excess terms.
In section \ref{sec_recseq} we prove Theorem \ref{teo3}, constructing
a energy-consistent recovery sequence.

\subsection{Notation} 
Given a matrix $F\in\mathbb{R}^{n\times n}$, we denote 
its transpose by $F^{\trsp}$ and its symmetric part by $F_{\mathrm{sym}} =
\frac{1}{2}(F + F^{\trsp})$. The space of symmetric $n\times n$ matrices is
denoted by $\R^{n\times n}_{\mathrm{sym}}$, whereas $\R^{n\times
  n}_{\mathrm{sym, pos}}$ stands for the space of symmetric, positive
definite  $n\times n$ matrices.
By $SO(n) = \{R\in\mathbb{R}^{n\times n}; ~ R^{\trsp} = R^{-1} \mbox{ and }
  \det R=1\}$ we mean the group of special rotations; its tangent
  space at $Id_n$ consists of skew-symmetric matrices:  $T_{Id_n}SO(n)=so(n) =
\{F\in \mathbb{R}^{n\times n}; ~ F_{\mathrm{sym}}=0\}$.
We use the matrix norm $|F| = (\mbox{trace}(F^{\trsp} F))^{1/2}$, which
is induced by the inner product $\langle F_1 : F_2\rangle =
\mbox{trace}(F_1^{\trsp} F_2)$. The $2 \times 2$ principal minor of 
$F\in\mathbb{R}^{3\times 3}$ is denoted by $F_{2\times 2}$.
Conversely, for a given $F_{2\times 2} \in \mathbb{R}^{2\times 2}$, the $3 \times 3$ matrix with
principal minor equal $F_{2\times 2}$ and all other entries equal to
$0$, is denoted by $F_{2\times 2}^*$. Unless specified otherwise, all limits are
taken as the thickness parameter $h$ vanishes: $h\to 0$. By $C$ we denote any
universal positive constant, independent of $h$. 

\subsection{Acknowledgments}
M.L. is grateful to Annie Raoult and Shankar Venkataramani for
interest in the project and discussions. Support by the NSF grant DMS-1613153 is acknowledged.

\section{A proof of Theorem \ref{teo2}: compactness and a preliminary lower bound}\label{2}

In this section, assuming condition (iii) of Theorem \ref{teo1}, we
derive the compactness and (a version of) the lower bound in Theorem \ref{teo2}.
We first observe the implication $(iii)\Rightarrow (ii)$ in
Theorem \ref{teo1}:

\begin{lemma} \label{3to2}
Assume that condition (iii) in Theorem \ref{teo1} holds, for some
$n\geq 2$. Then we have: $$\inf\mathcal{E}^h\leq C h^{2(n+1)}.$$
\end{lemma}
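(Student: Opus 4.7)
The plan is to construct an explicit test deformation that realizes the scaling $h^{2(n+1)}$ by using the expansion fields supplied by (iii). Define
\begin{equation*}
u^h(x',x_3) = y_0(x') + \sum_{k=1}^{n+1}\frac{x_3^k}{k!}\vec b_k(x') \qquad \mbox{on } \Omega^h.
\end{equation*}
The key algebraic observation is that the first two columns of $\nabla u^h$ are $\partial_\alpha y_0 + \sum_{k=1}^{n+1}\frac{x_3^k}{k!}\partial_\alpha \vec b_k$ while the third column, obtained by differentiating in $x_3$, reindexes to $\sum_{k=0}^{n}\frac{x_3^k}{k!}\vec b_{k+1}$. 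Comparing with the definition of the $B_k$, this identifies
\begin{equation*}
\nabla u^h(x',x_3) = \sum_{k=0}^n \frac{x_3^k}{k!} B_k(x') + \frac{x_3^{n+1}}{(n+1)!}\bigl[\partial_1 \vec b_{n+1},\, \partial_2 \vec b_{n+1},\, 0\bigr](x'),
\end{equation*}
so the only deviation from the ``square root polynomial'' in (iii) is a remainder of size $\mathcal{O}(h^{n+1})$ uniformly on $\Omega^h$.

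Next I would compute $(\nabla u^h)^{\trsp}(\nabla u^h)$. Expanding the square, the cross terms and the square of the remainder contribute at most $\mathcal{O}(h^{n+1})$, since $\sum_{k=0}^n \frac{x_3^k}{k!} B_k$ is uniformly bounded. Invoking the assumption (\ref{expB}) from condition (iii), I conclude
\begin{equation*}
(\nabla u^h)^{\trsp}(\nabla u^h) = G(x',x_3) + \mathcal{O}(h^{n+1}) \qquad \mbox{uniformly on } \Omega^h.
\end{equation*}
Multiplying on both sides by $G^{-1/2}$, which is smooth and uniformly bounded, yields $\bigl((\nabla u^h) G^{-1/2}\bigr)^{\trsp}\bigl((\nabla u^h) G^{-1/2}\bigr) = Id_3 + \mathcal{O}(h^{n+1})$. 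Combined with $\det B_0 > 0$ and smoothness (which force $\det \nabla u^h > 0$ for $h$ small), the polar decomposition theorem gives
\begin{equation*}
\mathrm{dist}\bigl((\nabla u^h) G^{-1/2},\, SO(3)\bigr) \leq C h^{n+1} \qquad \mbox{pointwise on } \Omega^h.
\end{equation*}

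Finally, by assumption (iv) on the energy density $W$ — namely $\mathcal{C}^2$ regularity in a neighbourhood $\mathcal{U}$ of $SO(3)$ — together with $W = 0$ and $DW = 0$ on $SO(3)$, Taylor's theorem yields a quadratic upper bound $W(F)\leq C\,\mathrm{dist}^2(F, SO(3))$ for $F\in\mathcal U$. For $h$ sufficiently small, $(\nabla u^h) G^{-1/2}$ lies uniformly in $\mathcal U$, so inserting the pointwise distance bound and integrating over $\Omega^h$ produces $\mathcal{E}^h(u^h)\leq Ch^{2(n+1)}$, which is the claimed bound on $\inf\mathcal{E}^h$. There is no serious obstacle here; the only care required is in the bookkeeping of the $x_3^{n+1}$ remainder term arising from $\partial_\alpha \vec b_{n+1}$, which is handled by the uniform smoothness of the fields.
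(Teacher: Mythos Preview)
Your proposal is correct and follows essentially the same approach as the paper: the same test deformation $u^h = y_0 + \sum_{k=1}^{n+1}\frac{x_3^k}{k!}\vec b_k$, the same identification of $\nabla u^h$ as $\sum_{k=0}^n\frac{x_3^k}{k!}B_k$ plus an $\mathcal{O}(h^{n+1})$ remainder, and the same conclusion via condition (iii). The only cosmetic difference is that the paper invokes frame invariance of $W$ to write $W\big((\nabla u^h)G^{-1/2}\big) = W\big(\sqrt{Id_3 + \mathcal{O}(h^{n+1})}\big)$ directly, whereas you pass through the distance to $SO(3)$ and a quadratic upper bound on $W$; the two are equivalent.
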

\begin{proof}
Define $\displaystyle u^h(x',x_3) = y_0 + \sum_{k=1}^{n+1}\frac{x_3^k}{k!}\vec b_k$, so that:
$$\nabla u^h(x', x_3) = \sum_{k=0}^n\frac{x_3^k}{k!}B_k +
\frac{x_3^{n+1}}{(n+1)!}\big[ \partial_1 \vec b_{n+1},~ \partial_2 \vec b_{n+1},~0\big].$$
Consequently, $(\nabla u^h) G^{-1/2}$ is positive definite for all
small $h$, and modulo a rotation field it equals the following matrix
field on $\Omega^h$, where we used the assumption (\ref{expB}):
\begin{equation*} 
\begin{split}
\sqrt{\big((\nabla u^h) G^{-1/2}\big)^{\trsp}\big((\nabla u^h)
  G^{-1/2}\big)} & = \sqrt{Id_3 + G^{-1/2}\big((\nabla u^h)^{\trsp}\nabla u^h-G\big)
  G^{-1/2}} \\ & = \sqrt{Id_3 + \mathcal{O}(h^{n+1})} = Id_3 + \mathcal{O}(h^{n+1}).
\end{split}
\end{equation*}
This implies: $\displaystyle \mathcal{E}^h(u^h) = \frac{1}{h} \int_{\Omega^h} W\big(Id_3 +
\mathcal{O}(h^{n+1})\big)\dd x\leq C h^{2(n+1)}$, as claimed.
\end{proof}

Recalling results (\ref{h40}) and (\ref{h44}) quoted from \cite{LL}, we already
see that $\displaystyle{\lim_{h\to
    0}\frac{1}{h^4}\inf\mathcal{E}^h}= 0$ automatically implies:
$\inf\mathcal E^h \leq Ch^6$. Before addressing compactness at
$h^{2n}$ with $h\geq 3$, we develop the nonlinear rigidity estimates
applicable in the present context.

\begin{lemma} \label{approx1}
Assume that condition (iii) in Theorem \ref{teo1} holds, for some
$n\geq 2$. Let $V\subset\omega$ be an open, Lipschitz subdomain such
that $y_0$ is injective on $V$. Denote $V^h=V\times
(-\frac{h}{2},\frac{h}{2})$. Then for every $u^h\in W^{1,2}(V^h,
\R^3)$ there exists $\bar R^h\in SO(3)$ such that:
$$\frac{1}{h}\int_{V^h}\big|\nabla u^h - \bar
R^h\sum_{k=0}^n\frac{x_3^k}{k!}B_k\big|^2\dd x \leq
C\Big(\frac{1}{h}\int_{V^h}W\big((\nabla u^h)G^{-1/2}\big)\dd x + h^{2n+1}|V^h|\Big).$$
The constant $C$ is uniform for all $V^h\subset\Omega^1$ that are
bi-Lipschitz equivalent with controlled Lipschitz constants.
\end{lemma}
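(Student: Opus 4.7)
The plan is to use the energy-consistent deformation from the proof of Lemma \ref{3to2} as a local bi-Lipschitz change of variables, and then to apply the Friesecke--James--Müller nonlinear rigidity estimate \cite{FJM02} to the composition. Concretely, I would set $\Phi^h(x', x_3) = y_0(x') + \sum_{k=1}^{n+1}\frac{x_3^k}{k!}\vec b_k(x')$ on $V^h$, so that $\nabla\Phi^h = A^h + \mathcal{O}(h^{n+1})$, where $A^h \doteq \sum_{k=0}^n\frac{x_3^k}{k!}B_k$ is the target matrix field in the lemma. Condition \eqref{expB} yields $(\nabla\Phi^h)^{\trsp}(\nabla\Phi^h) = G + \mathcal{O}(h^{n+1})$ uniformly on $V^h$; the injectivity of $y_0$ on $V$ combined with $\det B_0 > 0$ makes $\Phi^h$ a bi-Lipschitz homeomorphism of $V^h$ onto $\Phi^h(V^h)$ with bi-Lipschitz constants uniform in $h$, and the polar decomposition of $(\nabla\Phi^h)G^{-1/2}$ produces a rotation field $R_\Phi : V^h \to SO(3)$ such that $G^{1/2}(\nabla\Phi^h)^{-1} = R_\Phi^{\trsp} + \mathcal{O}(h^{n+1})$ uniformly on $V^h$.

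Next, applying FJM to $\tilde u^h \doteq u^h\circ(\Phi^h)^{-1}$ on $\Phi^h(V^h)$ produces $\bar R^h \in SO(3)$ with $\int_{\Phi^h(V^h)}|\nabla\tilde u^h - \bar R^h|^2 \leq C\int_{\Phi^h(V^h)}\dist^2(\nabla\tilde u^h, SO(3))$. Pulling back via $\nabla\tilde u^h\circ\Phi^h = (\nabla u^h)(\nabla\Phi^h)^{-1}$ and using the uniform bounds on $\det\nabla\Phi^h$ and on the operator norm of $(\nabla\Phi^h)^{-1}$ converts this into $\int_{V^h}|\nabla u^h - \bar R^h\nabla\Phi^h|^2 \leq C\int_{V^h}\dist^2\bigl((\nabla u^h)(\nabla\Phi^h)^{-1}, SO(3)\bigr)$. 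To handle the right-hand side, I would factor $(\nabla u^h)(\nabla\Phi^h)^{-1} = (\nabla u^h G^{-1/2})\cdot G^{1/2}(\nabla\Phi^h)^{-1}$; since the second factor is $\mathcal{O}(h^{n+1})$-close to $SO(3)$, the triangle inequality gives the pointwise bound $\dist^2\bigl((\nabla u^h)(\nabla\Phi^h)^{-1}, SO(3)\bigr) \leq C\bigl(h^{2(n+1)} + \dist^2(\nabla u^h G^{-1/2}, SO(3))\bigr) \leq C\bigl(h^{2(n+1)} + W((\nabla u^h)G^{-1/2})\bigr)$, using assumption (iii) on $W$. Integrating, dividing by $h$, and finally replacing $\nabla\Phi^h$ by $A^h$, which costs an absorbed $\mathcal{O}(h^{2n+1}|V^h|)$ term since $|\nabla\Phi^h - A^h| = \mathcal{O}(h^{n+1})$, produces the stated inequality.

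The main obstacle is the $h$-uniformity of the FJM rigidity constant on the thin domain $\Phi^h(V^h)$, which is itself of thickness of order $h$. This is precisely the issue addressed by the uniformity caveat in the lemma's statement: one covers $V^h$ by a controlled number of pieces, each bi-Lipschitz equivalent with $h$-independent constants to a fixed reference domain of unit aspect ratio, and applies FJM on each such piece, where the rigidity constant depends only on the bi-Lipschitz class. The remaining steps above are routine linear algebra and change of variables.
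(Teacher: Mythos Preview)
Your proof is correct and follows essentially the same route as the paper: define the change of variables $\Phi^h = Y$ from Lemma~\ref{3to2}, apply the Friesecke--James--M\"uller rigidity estimate to $u^h\circ(\Phi^h)^{-1}$, pull back using the uniform bounds on $\nabla\Phi^h$, and exploit $(\nabla\Phi^h)G^{-1/2}\in SO(3)\big(Id_3+\mathcal{O}(h^{n+1})\big)$ to pass from $\dist^2\big((\nabla u^h)(\nabla\Phi^h)^{-1},SO(3)\big)$ to $\dist^2\big((\nabla u^h)G^{-1/2},SO(3)\big)+\mathcal{O}(h^{2(n+1)})$. Your final paragraph slightly overcomplicates matters: the bi-Lipschitz equivalence with controlled constants is part of the \emph{hypothesis} of the lemma, so $\Phi^h(V^h)$ inherits a uniform bi-Lipschitz equivalence to the reference domain and FJM applies directly with a uniform constant---no covering is needed here (that step belongs to Corollary~\ref{approx2}).
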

\begin{proof}
Define $\displaystyle Y = y_0 + \sum_{k=1}^{n+1}\frac{x_3^k}{k!}\vec
b_k$, and observe that for $h$ sufficiently small, $Y$ is a smooth
diffeomorphism of $V^h$ onto its image $U^h\subset \R^3$. Consider the
change of variables $v^h=u^h\circ Y^{-1}\in W^{1,2}(U^h, \R^3)$ and
apply the fundamental geometric rigidity estimate \cite{FJM02}, yielding
existence of $\bar R^h\in SO(3)$ with:
$$\int_{U^h}|\nabla v^h - \bar R^h|^2 \leq C
\int_{U^h}\dist^2\big(\nabla v^h, SO(3)\big).$$
Changing variable in the left hand side gives:
\begin{equation*}
\begin{split}
\int_{U^h}|\nabla v^h - \bar R^h|^2 & =\int_{V^h}(\det \nabla
Y)\cdot \big|(\nabla u^h)(\nabla Y)^{-1} - \bar R^h\big|^2 \geq 
C \int_{V^h}\big|\nabla u^h - \bar R^h \nabla Y\big|^2 \\ & = 
C \int_{V^h}\big|\nabla u^h - \bar R^h \big(\sum_{k=0}^n\frac{x_3^k}{k!}B_k\big)\big|^2 +
C\int_{V^h}\mathcal{O}(h^{2(n+1)}).
\end{split}
\end{equation*}
Changing now variable in the right hand side and using 
$(\nabla Y)G^{-1/2}\in SO(3)\big(Id_3 + \mathcal{O}(h^{n+1})\big)$,
as established in Lemma \ref{3to2}, results in:
\begin{equation*}
\begin{split}
\int_{U^h}\dist^2\big(\nabla v^h, SO(3)\big) & =\int_{V^h}(\det \nabla
Y)\cdot \dist^2\big((\nabla u^h)(\nabla Y)^{-1}, SO(3)\big) \\ & \leq 
C \int_{V^h}\dist^2\big((\nabla v^h)G^{-1/2}, SO(3)(\nabla Y)G^{-1/2}\big)
\\ & \leq C \int_{V^h}\dist^2\big((\nabla v^h)G^{-1/2}, SO(3)\big) + 
C\int_{V^h}\mathcal{O}(h^{2(n+1)}).
\end{split}
\end{equation*}
Combining the three displayed inequalities above proves the result.
\end{proof}

The well-known approximation technique \cite{FJMhier} together with
the arguments in \cite[Corollary 2.3]{LRR}, yield the following 
estimate, whose proof we leave to the reader:
\begin{cor}\label{approx2}
Assume condition (iii) in Theorem \ref{teo1}, for some
$n\geq 2$. Then, given a sequence $\{u^h\in W^{1,2}(\Omega^h,
\R^3)\}_{h\to 0}$ such that $\mathcal{E}^h(u^h)\leq Ch^{2(n+1)}$, there
exists $\{R^h\in W^{1,2}(\omega, SO(3))\}_{h\to 0}$ with:
\begin{equation*}
\begin{split}
\frac{1}{h}\int_{\Omega^h} \big|\nabla u^h -
R^h\sum_{k=0}^n\frac{x_3^k}{k!}B_k\big|^2\dd x\leq Ch^{2(n+1)}\quad
\mbox{ and }\quad
\int_\omega |\nabla R^h(x')|^2\dd x' \leq C h^{2n}.
\end{split}
\end{equation*}
\end{cor}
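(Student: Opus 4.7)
The plan is to adapt the classical covering-and-mollification scheme from \cite{FJMhier} to the prestrained setting, with Lemma \ref{approx1} as the local building block. The key scaling observation is that the error term $h^{2n+1}|V^h|$ in Lemma \ref{approx1}, when summed over an $h$-scale partition of $\omega$, yields precisely an $h^{2(n+1)}$ contribution, since $\sum_k h^{2n+1}|V_k^h|\leq h^{2n+1}|\Omega^h|\leq Ch^{2n+2}$.

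First, I would cover $\omega$ by an overlapping family $\{V_k\}$ of order $h^{-2}$ Lipschitz patches of diameter comparable to $h$, all bi-Lipschitz equivalent with controlled constants (upon rescaling by $h^{-1}$) to a fixed reference square, and chosen so that $y_0$ is injective on each $V_k$. Lemma \ref{approx1} supplies rotations $\bar R_k^h\in SO(3)$, and summing the local estimates shows that the piecewise-constant field $\bar R^h$, defined to equal $\bar R_k^h$ on $V_k$, already satisfies
\[
\frac{1}{h}\int_{\Omega^h}\Big|\nabla u^h - \bar R^h\sum_{j=0}^n\frac{x_3^j}{j!}B_j\Big|^2\dd x \leq C\big(\mathcal{E}^h(u^h) + h^{2n+2}\big)\leq Ch^{2(n+1)}.
\]

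To control oscillations, for each pair of adjacent patches $V_k, V_\ell$ I would apply Lemma \ref{approx1} on the double patch $V_k\cup V_\ell$ (still bi-Lipschitz to the reference with controlled constants) to obtain a common rotation $\bar R_{k\ell}^h$. Comparison of this with each of $\bar R_k^h, \bar R_\ell^h$ via the triangle inequality, together with the uniform lower bound $|\xi B_0(x')|\geq c|\xi|$ stemming from $\det B_0>0$, yields
\[
|\bar R_k^h - \bar R_\ell^h|^2 \leq \frac{C}{h^3}\int_{(V_k\cup V_\ell)^h} W\big((\nabla u^h)G^{-1/2}\big)\dd x + Ch^{2n+2}.
\]
Summing over adjacent pairs and using the bounded overlap of the cover then gives $\sum_{k\sim\ell}|\bar R_k^h-\bar R_\ell^h|^2 \leq Ch^{2n}$.

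Finally, convolving $\bar R^h$ with a smooth kernel at scale $h$ produces a matrix field $\tilde R^h\in C^\infty(\omega,\R^{3\times 3})$ satisfying $\int_\omega|\nabla\tilde R^h|^2\dd x'\leq C\sum_{k\sim\ell}|\bar R_k^h-\bar R_\ell^h|^2 \leq Ch^{2n}$ and lying within distance $Ch^{n+1}$ of $SO(3)$. The pointwise nearest-point projection $R^h := \pi_{SO(3)}(\tilde R^h)\in W^{1,2}(\omega, SO(3))$ is then well-defined for $h$ small and Lipschitz, inheriting the same gradient bound; replacing $\bar R^h$ by $R^h$ in the approximation estimate costs at most an $L^2$-error of order $h^{2(n+1)}$, which is absorbed. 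The main obstacle I anticipate is keeping the combinatorics and bi-Lipschitz structure of the cover uniform in $h$; once that scaffolding is in place, the oscillation estimate and the smoothing-plus-projection step follow the standard FJM recipe as in \cite[Corollary 2.3]{LRR}.
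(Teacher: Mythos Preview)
Your proposal is correct and follows exactly the route the paper intends: the paper does not give its own proof of Corollary \ref{approx2} but simply invokes the FJM covering-and-mollification scheme \cite{FJMhier} together with \cite[Corollary 2.3]{LRR}, which is precisely the argument you outline. Your bookkeeping of the error terms (in particular the $h^{2n+1}|V_k^h|$ contributions summing to $Ch^{2(n+1)}$ and the oscillation sum $\sum_{k\sim\ell}|\bar R_k^h-\bar R_\ell^h|^2\leq Ch^{2n}$) is accurate.
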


\medskip

We now show the compactness part of Theorem \ref{teo1}:

\begin{lemma}\label{compact}
Assume condition (iii) in Theorem \ref{teo1}, for some $n\geq 2$.
Let the sequence of deformations $\{u^h\in
W^{1,2}(\Omega^h, \R^3)\}_{h\to 0}$ satisfy: $\mathcal{E}^h(u^h)\leq
Ch^{2(n+1)}$. Then:
\begin{itemize}
\item[(i)] The averaged displacements $V^h$ converge, up to a subsequence, 
to the first order isometry $V$ as in Theorem \ref{teo2} (i).
\item[(ii)] The scaled strains $\displaystyle \frac{1}{h}\big((\nabla
  y_0)^{\trsp}\nabla V^h\big)_{\sym}$ converge, up to a subsequence,
  weakly in $L^2(\omega, \R^{3\times 3})$ to some $\mathbb{S}\in \mathcal{S}_{y_0}$.
\end{itemize}
\end{lemma}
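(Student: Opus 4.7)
The starting point is Corollary \ref{approx2}, which supplies a rotation field $R^h\in W^{1,2}(\omega, SO(3))$ satisfying both a gradient approximation of order $h^{n+1}$ and the estimate $\|\nabla R^h\|_{L^2}\leq C h^n$. The first step is to pass from $R^h$ to a constant rotation $\bar R^h\in SO(3)$: by Poincar\'e's inequality and the Lipschitz continuity of the projection onto $SO(3)$ near the manifold, one obtains $\bar R^h$ with $\|R^h-\bar R^h\|_{L^2}\leq Ch^n$. Setting
\[
A^h = \frac{1}{h^n}\big((\bar R^h)^{\trsp} R^h - Id_3\big)\in W^{1,2}(\omega,\mathbb R^{3\times 3}),
\]
I would verify uniform bounds on $\|A^h\|_{W^{1,2}}$, extract a weak $W^{1,2}$ limit $A$, and use the $SO(3)$ constraint $(A^h)_{\sym}=-\tfrac{h^n}{2}(A^h)^{\trsp} A^h$ to force $A\in so(3)$ almost everywhere. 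Rellich and the 2D Sobolev embedding deliver strong convergence of $A^h$ in every $L^p$, $p<\infty$.

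Next, I would compute the gradient of the averaged displacement. Fixing $c^h$ by the normalisation $\fint_\omega V^h=0$, and applying $(\bar R^h)^{\trsp}$ to the approximation from Corollary \ref{approx2}, the identity $(\bar R^h)^{\trsp} R^h = Id_3+h^n A^h$, combined with the vanishing of $\fint_{-h/2}^{h/2} x_3^k\, dx_3$ for odd $k$ and its $O(h^k)$ bound for even $k$, yields
\[
\nabla' V^h = A^h\,\nabla' y_0 + O(h) \qquad\text{strongly in } L^2(\omega, \mathbb R^{3\times 2}).
\]
Together with the strong $L^2$ convergence $A^h\to A$ and Poincar\'e's inequality on $V^h$, this gives $V^h\to V$ strongly in $W^{1,2}$ with $\nabla V= A\nabla y_0$. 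Skew-symmetry of $A$ immediately produces $((\nabla y_0)^{\trsp}\nabla V)_{\sym}=0$, and smoothness of $\nabla y_0$ together with $A\in W^{1,2}$ upgrades $V$ to $W^{2,2}$. This yields (i).

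For (ii), membership of each strain $((\nabla y_0)^{\trsp}\nabla V^h)_{\sym}$ in $\mathcal S_{y_0}$ is automatic from the definition, so it suffices to establish $L^2$-boundedness of $h^{-1}((\nabla y_0)^{\trsp}\nabla V^h)_{\sym}$. Symmetrising the expansion from the previous step gives
\[
\frac{1}{h}\big((\nabla y_0)^{\trsp}\nabla V^h\big)_{\sym} = -\frac{h^{n-1}}{2}\,(\nabla y_0)^{\trsp}(A^h)^{\trsp}A^h\,\nabla y_0 + O(1).
\]
Here the hypothesis $n\geq 2$ is decisive: since $A^h$ is bounded in $L^4(\omega)$ via Sobolev embedding, the first term is of order $h^{n-1}\leq h$ in $L^2$. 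Extracting a weakly convergent subsequence in $L^2$ and invoking the weak closedness of $\mathcal S_{y_0}$ concludes the argument.

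The main technical difficulty is not conceptual but a careful bookkeeping of orders: one must track that the approximation error $O(h^{n+1})$ from Corollary \ref{approx2}, the Poincar\'e-controlled deviation $O(h^n)$ of $R^h$ from $\bar R^h$, and the quadratic $SO(3)$-correction $O(h^n)$ to $(A^h)_{\sym}$ combine to leave exactly the claimed $O(h)$ error in $\nabla V^h$ after division by $h^n$, and that a further division by $h$ in (ii) does not destroy boundedness. It is precisely at this juncture that the quantitative improvement from $n=1$ to $n\geq 2$ is used.
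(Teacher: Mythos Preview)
Your proposal is correct and follows essentially the same approach as the paper. Both arguments start from Corollary \ref{approx2}, pass to a constant rotation $\bar R^h$ (the paper via projecting the average of $(\nabla u^h)(\sum_k \frac{x_3^k}{k!}B_k)^{-1}$, you via Poincar\'e plus nearest-point projection --- equivalent devices), introduce the same normalised field $S^h=A^h=\frac{1}{h^n}((\bar R^h)^{\trsp}R^h-Id_3)$, identify its weak $W^{1,2}$ limit as skew, and read off $\nabla V = S\nabla y_0$; for (ii) both use the quadratic identity $(S^h)_{\sym}=-\frac{h^n}{2}(S^h)^{\trsp}S^h$ together with the weak closedness of $\mathcal S_{y_0}$.
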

\begin{proof}
{\bf 1.} Define the following rotation: $\displaystyle \bar R^h = \mathbb{P}_{SO(3)}\fint_{\Omega^h}
(\nabla u^h)\Big(\sum_{k=0}^n\frac{x_3^k}{k!} B_k\Big)^{-1}\dd x$. In
order to observe that the above definition is legitimate, we write:
\begin{equation*}
\begin{split}
\dist^2\Big(\fint_{\Omega^h} (\nabla u^h)&\Big(\sum_{k=0}^n\frac{x_3^k}{k!} B_k\Big)^{-1}\dd x,
SO(3)\Big)  \leq \Big|\fint_{\Omega^h}
(\nabla u^h)\Big(\sum_{k=0}^n\frac{x_3^k}{k!} B_k\Big)^{-1}\dd x -
R^h(x')\Big|^2 \\ & \leq 2 \fint_{\Omega^h}
\big|(\nabla u^h)\Big(\sum_{k=0}^n\frac{x_3^k}{k!} B_k\Big)^{-1} -
R^h\big|^2\dd x +  2 \big|\big(\fint_\omega R^h\dd x'\big) - R^h(x')\big|^2,
\end{split}
\end{equation*}
and upon integrating $\dd x'$ on the domain $\omega$ while noting Corollary \ref{approx2}, obtain:
\begin{equation*}
\dist^2\Big(\fint_{\Omega^h} (\nabla u^h)\Big(\sum_{k=0}^n\frac{x_3^k}{k!} B_k\Big)^{-1}\dd x,
SO(3)\Big)  \leq Ch^{2(n+1)} + C h^{2n}\leq Ch^{2n}.
\end{equation*}
Consequently, there also follows:
\begin{equation}\label{use1}
\Big|\fint_{\Omega^h} (\nabla u^h)\Big(\sum_{k=0}^n\frac{x_3^k}{k!}
B_k\Big)^{-1}\dd x - \bar R^h\Big|^2\leq Ch^{2n}, \qquad \fint_{\omega}
|R^h - \bar R^h|^2\dd x' \leq Ch^{2n}.
\end{equation}
Set now $c^h\in \R^3$ so that $\int_\omega V^h\dd x'= 0$. We get:
\begin{equation}\label{nabVh}
\begin{split}
\nabla V^h = & \;\frac{1}{h^n}\fint_{-h/2}^{h/2} (\bar R^h)^{\trsp}\big[\partial_1 u^h, \partial_2 u^h\big] - 
(\bar R^h)^{\trsp} R^h\Big(\sum_{k=0}^n\frac{x_3^k}{k!}B_k\Big)_{3\times 2}\dd x_3
\\ & + S^h \fint_{-h/2}^{h/2} \Big(\sum_{k=0}^n\frac{x_3^k}{k!}B_k\Big)_{3\times 2}\dd x_3,
\end{split}
\end{equation}
where we define the following matrix fields whose convergence (up to
a subsequence) results
from the second bound in (\ref{use1}) and from Corollary \ref{approx2}:
\begin{equation}\label{Sh}
S^h = \frac{1}{h^n}\Big((\bar R^h)^{\trsp}R^h - Id_3\Big)
\rightharpoonup S \quad \mbox{ weakly in }\; W^{1,2}(\omega, \R^{3\times 3}).
\end{equation}
We also note that $S\in so(3)$ a.e. in $\omega$.  Since the first
term in the right hand side of (\ref{nabVh}) converges to $0$ in
$L^2(\omega)$, in virtue of Corollary \ref{approx2}, we conclude the
following convergence, up to a subsequence:
$$\nabla V^h\to (SB_0)_{3\times 2} = S \nabla y_0 \quad \mbox{
  strongly in } \; L^2(\omega,\mathbb{R}^{3\times 2}).$$
It also follows that the limit $S\nabla y_0 \in W^{1,2}(\omega,
\R^{3\times 2})$. A further application of the Poincare inequality to
the mean-zero displacements $V^h$, yields their strong convergence (up to
a subsequence in $W^{1,2}(\omega,\R^3)$) to some $V\in
W^{2,2}(\omega,\R^3)$ satisfying $\nabla V = (SB_0)_{3\times 2}$. By
skew-symmetry of $S$, it follows that $(\nabla y_0)^{\trsp}\nabla V$
is skew a.e. in $\omega$, proving (i).

\medskip

{\bf 2.} We observe that the first term in the right hand side of
(\ref{nabVh}) has its $L^2(\omega)$ norm bounded by $Ch^2$, in view of the
first estimate in Corollary \ref{approx2}. Consequently, in the
decomposition of $\displaystyle \frac{1}{h}\big((\nabla
y_0)^{\trsp}\nabla V^h\big)_{\sym}$, parallel to that in (\ref{nabVh}),
the corresponding first term has a weakly converging subsequence. 
The remaining second term equals:
$$\frac{1}{h}\Big((\nabla y_0)^{\trsp} S^h \big(\nabla y_0 +
\mathcal{O}(h^2)\big)\Big)_{\sym} = \frac{1}{h}(\nabla y_0)^{\trsp} S^h_{\sym} \nabla y_0
+ \mathcal{O}(h|S^h|).$$
The $L^2(\omega)$ norm of the second term above clearly converges to $0$,
whereas the first term obeys: 
\begin{equation}\label{Ssym}
\frac{1}{h}S^h_{\sym}= -\frac{h^{n-1}}{2} (S^h)^{\trsp}S^h\to 0
\quad \mbox{ strongly in }\; L^2(\omega, \R^{3\times 3}).
\end{equation}
This ends the proof of the claim.
\end{proof}

\medskip

We are now ready to derive the lower bound on the scaled energies
$h^{-2(n+1)}\mathcal{E}^h(u^h)$, in terms of the expansion fields
$y_0, \{\vec b_k\}_{k=1}^{n+1}$ in condition (iii) of Theorem \ref{teo1}:

\begin{lemma}\label{lowbd}
In the context of Lemma \ref{compact}, there holds:
\begin{equation*}
\begin{split}
&\liminf_{h\to 0}  \frac{1}{h^{2(n+1)}}\mathcal{E}^h(u^h) \\ & \quad \geq  \frac{1}{2} \int_{\Omega^1}\mathcal{Q}_2\bigg(x',~
\mathbb{S} - \delta_{n+1} \big((\nabla y_0)^{\trsp}\nabla \vec b_{n+1}\big)_{\sym} 
+ x_3 \big((\nabla y_0)^{\trsp}\nabla \vec p + (\nabla V)^{\trsp}\nabla \vec b_1\big)
\\ & \qquad\quad
+ \frac{x_3^{n+1}}{2(n+1)!} \Big( 2\big((\nabla y_0)^{\trsp}\nabla \vec
b_{n+1}\big)_{\sym} + \sum_{k=1}^n\binom{n+1}{k}
(\nabla \vec b_k)^{\trsp}\nabla\vec b_{n+1-k} - \partial_{3}^{(n+1)}G(x',0)\Big)  \bigg)\dd x,
\end{split}
\end{equation*}
with the coefficient $\delta_{n+1}$ given by:
\begin{equation}\label{deldef}
\delta_{n+1} = \left\{\begin{array}{ll} \displaystyle
    \frac{1}{(n+2)!2^{n+1}} &\mbox{ for } \; n \mbox{ odd} \\
0 &\mbox{ for } \; n \mbox{ even} \end{array}\right. . 
\end{equation}
\end{lemma}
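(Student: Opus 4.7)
The plan is to combine a truncated Taylor expansion of $W$ around $SO(3)$ with weak $L^2$-lower semicontinuity of $\mathcal{Q}_3$, applied to a scaled strain. On the rescaled domain $\Omega^1$, introduce
$$K^h(x',x_3) := \frac{1}{h^{n+1}}\bigl((A^h)^{\trsp}A^h-G(x',hx_3)\bigr), \qquad A^h(x',x_3):=\nabla u^h(x',hx_3).$$
Using the expansion $W(F)=\tfrac{1}{8}\mathcal{Q}_3(F^{\trsp}F-Id_3)+o(|F^{\trsp}F-Id_3|^2)$ valid near $SO(3)$, applied to $F=A^hG(x',hx_3)^{-1/2}$, together with a standard truncation to the ``good set'' $\{h^{n+1}|K^h|\le h^{(n+1)/2}\}$, whose complement has vanishing measure by Chebyshev applied to the $L^2$-boundedness of $\chi^h K^h_{\sym}$ (itself coming from coercivity of $\mathcal{Q}_3$), one reduces the energy lower bound to
$$\liminf_{h\to 0}\frac{1}{h^{2(n+1)}}\mathcal{E}^h(u^h) \geq \frac{1}{8}\int_{\Omega^1}\mathcal{Q}_3\bigl(G(x',0)^{-1/2}K(x',x_3)G(x',0)^{-1/2}\bigr)\dd x,$$
where $K$ is the weak $L^2$-limit of $K^h$ along a subsequence.

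Next, compute $K$ via the rigidity approximation. Writing $B^h := (\bar R^h)^{\trsp}A^h = (Id_3+h^nS^h)P^h + h^{n+1}T^h$ via Corollary \ref{approx2} with $\|T^h\|_{L^2}\le C$, the key cancellation $(Id_3+h^nS^h)^{\trsp}(Id_3+h^nS^h)=Id_3$ eliminates all direct $S^h$-contributions from $(B^h)^{\trsp}B^h$, leaving
$$K^h = \frac{(P^h)^{\trsp}P^h-G(x',hx_3)}{h^{n+1}} + (P^h)^{\trsp}T^h + T^{h,\trsp}P^h + \text{negligible terms}.$$
Condition (iii) of Theorem \ref{teo1} implies that the first term converges uniformly on $\Omega^1$ to $\tfrac{x_3^{n+1}}{(n+1)!}\bigl(\sum_{k=1}^n\binom{n+1}{k}B_k^{\trsp}B_{n+1-k}-\partial_3^{(n+1)}G(x',0)\bigr)$, while the second term converges weakly in $L^2$ to $2(B_0^{\trsp}T)_{\sym}$, where $T^h\rightharpoonup T$. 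Passing to the upper-left $2\times 2$ block and invoking $\mathcal{Q}_3(G^{-1/2}MG^{-1/2})\geq \mathcal{Q}_2(x',M_{2\times 2})$ from \eqref{Qform}, one obtains
$$\liminf\frac{1}{h^{2(n+1)}}\mathcal{E}^h(u^h)\geq\frac{1}{2}\int_{\Omega^1}\mathcal{Q}_2(x',U+\sigma)\dd x,$$
with $U=\bigl((\nabla y_0)^{\trsp}T_{3\times 2}\bigr)_{\sym}$ and $\sigma=\tfrac{x_3^{n+1}}{2(n+1)!}\bigl[\sum_{k=1}^n\binom{n+1}{k}(\nabla\vec b_k)^{\trsp}\nabla\vec b_{n+1-k}-\partial_3^{(n+1)}G(x',0)_{2\times 2}\bigr]$.

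The final step identifies $U$'s $x_3$-moments and applies an orthogonality argument. The $0$-th moment $\int U\dd x_3=\mathbb{S}$ follows from the compactness computation of Lemma \ref{compact}, since $\tfrac{1}{h}S^h_{\sym}\to 0$ in $L^2$ for $n\geq 2$. The $1$-st moment, giving the bending $A_1$, requires the displacement $V^h_1(x') := \tfrac{12}{h^{n+1}}\int_{-1/2}^{1/2}s\bigl[(\bar R^h)^{\trsp}(u^h(x',hs)-c^h) - \sum_{k\geq 1}\tfrac{(hs)^k}{k!}\vec b_k(x')\bigr]\dd s$, for which $\nabla V^h_1 = S^h\nabla\vec b_1 + 12\int sT^h_{3\times 2}\dd s+o(1)$. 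Combining the weak $W^{1,2}$-convergence $S^h\rightharpoonup S$ with the identity $A_1 = (\nabla y_0)^{\trsp}(\nabla S)\vec b_1 = (\nabla y_0)^{\trsp}\nabla\vec p + (\nabla V)^{\trsp}\nabla\vec b_1$, derived from $\vec p=S\vec b_1$ (itself a consequence of $\nabla V=S\nabla y_0$ and the skew-symmetry of $S$), identifies the $1$-st moment of $U$ as $A_1/12$ up to a correction involving $((\nabla y_0)^{\trsp}\nabla\vec b_{n+1})_{\sym}$. The coefficient $\delta_{n+1}$ is specifically designed so that $\delta_{n+1}=\tfrac{1}{(n+1)!}\int_{-1/2}^{1/2}s^{n+1}\dd s$, which guarantees that the difference $U-\Pi$, where $\Pi := A_0 + x_3A_1 + \tfrac{x_3^{n+1}}{(n+1)!}((\nabla y_0)^{\trsp}\nabla\vec b_{n+1})_{\sym}$, has vanishing $0$-th, $1$-st, and $(n+1)$-th $x_3$-moments. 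By bilinearity of $\mathcal{Q}_2$ and these orthogonalities, expanding $\mathcal{Q}_2\bigl((\Pi+\sigma)+(U-\Pi)\bigr)$ makes the cross terms integrate to zero in $x_3$, yielding $\int\mathcal{Q}_2(U+\sigma)\geq\int\mathcal{Q}_2(\Pi+\sigma) = \int\mathcal{Q}_2(A_0+x_3A_1+x_3^{n+1}A_{n+1})$, the stated bound. The main technical obstacle is the first-moment identification: tracking how $V^h_1$, the rotation gradient $\nabla S^h$, and the bending vector $\vec p=S\vec b_1$ conspire through the expansion of $P^h$ to produce exactly $A_1/12$ at the right order.
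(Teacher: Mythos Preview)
Your overall strategy---Taylor expand $W$, truncate to a good set, pass to a weak limit of the scaled strain, then project to the $2\times 2$ block---matches the paper and is sound. The paper works with $\mathcal{Z}^h=\frac{1}{h^{n+1}}(\nabla u^h-R^hP^h)$ rather than $K^h$, but since $(Id_3+h^nS^h)^{\trsp}(Id_3+h^nS^h)=Id_3$ (as you correctly use), your $T$ is exactly $\bar R^{\trsp}\mathcal{Z}$ and your $U$ coincides with the paper's $\big(B_0^{\trsp}\bar R^{\trsp}\mathcal{Z}\big)_{2\times 2,\sym}$.

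The gap is in the last step. Your orthogonality argument needs the cross term $\int_{\Omega^1}\langle \Pi+\sigma,\,U-\Pi\rangle_{\mathcal{Q}_2}\dd x$ to vanish. Since $\Pi+\sigma$ is a polynomial in $x_3$ with only constant, linear, and $x_3^{n+1}$ terms, this requires the $0$th, $1$st, \emph{and} $(n+1)$th $x_3$-moments of $U-\Pi$ to vanish. You compute the $0$th moment of $U$ via Lemma~\ref{compact} and sketch the $1$st moment via $V_1^h$, but you never compute the $(n+1)$th moment of $U$; you simply assign $\Pi$ the coefficient $\frac{1}{(n+1)!}\big((\nabla y_0)^{\trsp}\nabla\vec b_{n+1}\big)_{\sym}$ without showing that $\int_{-1/2}^{1/2}x_3^{n+1}U\dd x_3$ matches $\int_{-1/2}^{1/2}x_3^{n+1}\Pi\dd x_3$. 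Without this, the cross term need not vanish and the inequality $\int\mathcal{Q}_2(U+\sigma)\geq\int\mathcal{Q}_2(\Pi+\sigma)$ does not follow. Note also that the $1$st moment of $U$ couples $A_1$ with the $x_3^{n+1}$ coefficient (via $\int x_3^{n+2}\dd x_3$ when $n$ is even), so two moment equations cannot disentangle three unknown coefficients.

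The paper avoids this entirely by proving more: instead of computing moments, it uses the finite-difference fields $f^{s,h}(x)=\frac{1}{h^{n+1}s}(\bar R^h)^{\trsp}\big(u^h(x',h(x_3+s))-u^h(x',hx_3)\big)-(\text{expansion})$ and identifies both their strong limit $\vec p=S\vec b_1$ and the weak limits of $\partial_j f^{s,h}$. Passing $s\to 0$ yields the \emph{pointwise} identity
\[
\big(B_0^{\trsp}\bar R^{\trsp}\mathcal{Z}(x',x_3)\big)_{2\times 2}=\big(B_0^{\trsp}\bar R^{\trsp}\mathcal{Z}(x',0)\big)_{2\times 2}+x_3\big((\nabla y_0)^{\trsp}\nabla\vec p+(\nabla V)^{\trsp}\nabla\vec b_1\big)+\tfrac{x_3^{n+1}}{(n+1)!}(\nabla y_0)^{\trsp}\nabla\vec b_{n+1},
\]
i.e.\ $U$ is \emph{exactly} the polynomial $\Pi$ (no residual, no projection needed). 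The $\delta_{n+1}$ then appears only when identifying the trace $\big(B_0^{\trsp}\bar R^{\trsp}\mathcal{Z}(x',0)\big)_{2\times 2,\sym}$ from the $0$th moment $\mathbb{S}$. To close your argument you would either need to adopt this finite-difference step, or supply a separate computation of the $(n+1)$th moment of $U$---which amounts to essentially the same work.
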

\begin{proof}
{\bf 1.} By Corollary \ref{approx2}, the following matrix fields
$\{\mathcal{Z}^h\in L^2(\Omega^1, \R^{3\times 3})\}_{h\to 0}$ have a
converging subsequence, weakly in $L^2(\Omega^1, \R^{3\times 3})$:
\begin{equation}\label{Zh}
\mathcal{Z}^h(x', x_3) = \frac{1}{h^{n+1}}\Big(\nabla u^h(x', hx_3) -
R^h(x')\sum_{k=0}^n\frac{h^kx_3^k}{k!}B_k(x')\Big) \rightharpoonup \mathcal{Z}.
\end{equation}
We write: $\displaystyle (R^h)^{\trsp}\nabla
u^h(x', hx_3) = \sum_{k=0}^n\frac{h^kx_3^k}{k!}B_k + h^{n+1}(R^h)^{\trsp}\mathcal{Z}^h$
and observe that:
\begin{equation}\label{comp}
\begin{split}
\mathcal{E}^h(u^h) & =\int_{\Omega^1} W\big((R^h)^{\trsp}\nabla
u^h(x', hx_3) G(x', hx_3)^{-1/2}\big)\dd x \\ & \geq
\int_{\{|\mathcal{Z}^h|^2\leq \frac{1}{h}\}} W\big(\sqrt{Id_3 + G(x',
  hx_3)^{-1/2}J^h G(x', hx_3)^{-1/2}}\big)\dd x,
\end{split}
\end{equation}
where the intermediary field $J^h$ has the following expansion, on the
set $\{|\mathcal{Z}^h|^2\leq \frac{1}{h}\}\subset\Omega^1$:
\begin{equation*}
\begin{split}
J^h(x', x_3)  = & \;
\Big(\sum_{k=0}^n\frac{h^kx_3^k}{k!}B_k\Big)^{\trsp}
\Big(\sum_{k=0}^n\frac{h^kx_3^k}{k!}B_k\Big) - G(x', hx_3) \\ & +
2h^{n+1}\bigg(\Big(\sum_{k=0}^n\frac{h^kx_3^k}{k!}B_k\Big)^{\trsp} (R^h)^{\trsp}\mathcal{Z}^h\bigg)_{\sym}
+ h^{2(n+1)}(\mathcal{Z}^h)^{\trsp}\mathcal{Z}^h \\  = & \;
\frac{h^{n+1}x_3^{n+1}}{(n+1)!} \Big(\sum_{k=1}^n\binom{n+1}{k}
(B_k)^{\trsp}B_{n+1-k} - \partial_{3}^{(n+1)}G(x',0)\Big) \\ & +
2h^{n+1}\big(B_0^{\trsp} (R^h)^{\trsp}\mathcal{Z}^h\big)_{\sym} + o(h^{n+1})
\end{split}
\end{equation*}
Consequently, we get from (\ref{comp}) and Taylor expanding $W$ at $Id_3$:
\begin{equation*}
\begin{split}
\frac{1}{h^{2(n+1)}}\mathcal{E}^h(u^h)  \geq \frac{1}{2}
\int_{\{|\mathcal{Z}^h|^2\leq \frac{1}{h}\}} \mathcal{Q}_3\Big(G(x',
  hx_3)^{-1/2} \big(\frac{1}{h^{n+1}}J^h + o(1)\big)
 G(x', hx_3)^{-1/2}\Big)\dd x.
\end{split}
\end{equation*}
Since $B_0^{\trsp} (R^h)^{\trsp}\mathcal{Z}^h$ converges weakly
in $L^2(\Omega^1, \R^{3\times 3})$, up to a subsequence, to
$B_0^{\trsp}\bar R^{\trsp}\mathcal{Z}$, for some 
$\bar R\in SO(3)$ (which is an accumulation point of $\bar R^h$ in the
proof of Lemma \ref{compact}), the above results in:
\begin{equation}\label{comp2}
\begin{split}
& \liminf_{h\to 0}  \frac{1}{h^{2(n+1)}}\mathcal{E}^h(u^h)  \\ & \geq 
\frac{1}{2} \int_{\Omega^1} \mathcal{Q}_3\bigg(
\frac{x_3^{n+1}}{2(n+1)!} G(x', 0)^{-1/2}\Big(\sum_{k=1}^n\binom{n+1}{k}
(B_k)^{\trsp}B_{n+1-k} - \partial_{3}^{(n+1)}G(x',0)\Big) G(x',
0)^{-1/2} \\ & \qquad\qquad \qquad + G(x', 0)^{-1/2} \big(B_0^{\trsp} \bar
R^{\trsp}\mathcal{Z} \big)_{\sym}  G(x', 0)^{-1/2}\bigg)\dd x
\end{split}
\end{equation}

\medskip

{\bf 2.} We need to identify the relevant $2\times 2$ minor of the limiting term $\big(B_0^{\trsp} \bar
R^{\trsp}\mathcal{Z} \big)_{\sym}$ in (\ref{comp2}). We apply the
finite difference technique \cite{FJMhier} and consider the following
fields $\{f^{s,h}\in W^{1,2}(\Omega^1, \R^3)\}_{s>0, h\to 0}$:
\begin{equation*}
\begin{split}
f^{s,h}(x) & = \fint_0^sh (\bar R^h)^{\trsp}\mathcal{Z}^h(x', x_3+t) +
S^h\sum_{k=0}^n\frac{h^k(x_3+t)^k}{k!}B_k(x')\dd t ~ e_3 \\
& = \frac{1}{h^{n+1}s}(\bar R^h)^{\trsp}\big(u^h(x', h(x_3+s)) - u^h(x', hx_3)\big) -
\frac{1}{h^n}\fint_{0}^s \sum_{k=0}^n\frac{h^k(x_3+t)^k}{k!}\vec b_{k+1}\dd t.
\end{split}
\end{equation*}
where $S^h$ is defined in (\ref{Sh}). Recall that, as proved in Lemma
\ref{compact}, $\nabla V = (SB_0)_{3\times 2}$ and that $S$ is a.e. in
$so(3)$. It follows that the vector $\vec p$ defined in Theorem
\ref{teo2} (ii) must coincide with $SB_0e_3 = S\vec b_1$.
Consequently, using the first definition above it now easily follows that:
\begin{equation}\label{p}
f^{s,h}\to S\vec b_1 = \vec p \quad \mbox{ strongly in }\;
L^2(\Omega^1, \R^3).
\end{equation}
Using the second definition, we further compute the in-plane derivatives of $f^{s,h}$ for $j=1\ldots 2$:
\begin{equation*}
\begin{split}
\partial_j f^{s,h}(x) & = \frac{1}{sh^{n+1}}(\bar R^h)^{\trsp}
\big(\partial_j u^h(x', h(x_3+s)) - \partial_j u^h(x', hx_3)\big) - \frac{1}{h^n}\fint_0^s\sum_{k=0}^n
\frac{h^k(x_3+t)^k}{k!}\partial_k\vec b_{k+1}\dd t \\ & =
\frac{1}{s}(\bar R^h)^{\trsp} \Big(\mathcal{Z}^h(x', x_3+s) -
\mathcal{Z}^h(x', x_3)\Big)e_j  \\ & \qquad + \frac{1}{sh^{n+1}}(Id_3 + h^n
  S^h)\sum_{k=1}^n\frac{h^k}{k!}\big((x_3+s)^k - x_3^k\big)B_k e_j -
  \frac{1}{h^n}\fint_0^s\sum_{k=0}^n
  \frac{h^k(x_3+t)^k}{k!}\partial_j\vec b_{k+1}\dd t.
\end{split}
\end{equation*}
The first term in the right hand side above converges to $\displaystyle \frac{1}{s}\bar R^{\trsp}
\big(\mathcal{Z}(x', x_3+s) - \mathcal{Z}(x', x_3)\big)e_j$, weakly in
$L^2(\Omega^1, \R^{3})$, whereas the last two terms may be rewritten as:
\begin{equation*}
\begin{split}
& \frac{1}{sh^{n+1}}(Id_3 + h^n
S^h)\sum_{k=1}^n\frac{h^k}{k!}\big((x_3+s)^k - x_3^k\big) \partial_j\vec b_k -
  \frac{1}{sh^{n+1}}\sum_{k=1}^{n+1}\frac{h^k}{k!}\big((x_3+s)^k-x_3^k\big)\partial_j\vec b_{k}
\\ & = \frac{1}{sh}S^h \sum_{k=1}^n\frac{h^k}{k!} \big((x_3+s)^k-x_3^k\big)\partial_j\vec b_{k}
- \frac{1}{s(n+1)!} \big((x_3+s)^{n+1}-x_3^{n+1}\big)\partial_j\vec b_{n+1}\\ &
\qquad \rightharpoonup S\partial_j\vec b_1 - \frac{1}{s(n+1)!}
\big((x_3+s)^{n+1}-x_3^{n+1}\big)\partial_j\vec b_{n+1} 
\quad \mbox{ weakly in } \; W^{1,2}(\Omega^1, \R^3).
\end{split}
\end{equation*}
In conclusion, and recalling (\ref{p}), we obtain the following
convergence, weakly in $W^{1,2}(\omega, \R^3)$:
\begin{equation*}
\begin{split}
\partial_j f^{s,h}(x) \rightharpoonup \frac{1}{s}\bar R^{\trsp}
\big(\mathcal{Z}(x', x_3+s) - \mathcal{Z}(x', x_3)\big)e_j + 
S\partial_j\vec b_1 - \frac{1}{s(n+1)!}
\big((x_3+s)^{n+1}-x_3^{n+1}\big)\partial_j\vec b_{n+1} = \partial_j\vec p. 
\end{split}
\end{equation*}
We thus see that:
\begin{equation*}
\begin{split}
\bar R^{\trsp} \Big(\mathcal{Z}(x', x_3) - \mathcal{Z}(x', 0)\Big)e_j =
x_3 \big(\partial_j\vec p - S \partial_j\vec b_1\big) +
\frac{1}{(n+1)!} x_3^{n+1}\partial_j\vec b_{n+1} \quad \mbox{ for }\; j=1\ldots 2,
\end{split}
\end{equation*}
which finally yields:
\begin{equation}\label{compu}
\begin{split}
\Big(B_0^{\trsp}\bar R^{\trsp}\mathcal{Z}(x', x_3)\Big)_{2\times 2} =
& \;  \Big(B_0^{\trsp}\bar R^{\trsp}\mathcal{Z}(x', 0)\Big)_{2\times 2}  
\\ & + x_3 \Big((\nabla y_0)^{\trsp}\nabla \vec p + (\nabla V)^{\trsp}\nabla \vec b_1\Big) +
\frac{1}{(n+1)!} x_3^{n+1}(\nabla y_0)^{\trsp}\nabla \vec b_{n+1}.
\end{split}
\end{equation}

\medskip

{\bf 3.} We now compute the symmetric part of the trace term
$\big(B_0^{\trsp}\bar R^{\trsp}\mathcal{Z}(x', 0)\big)_{2\times 2,
  \sym}$ and conclude the proof of the Lemma.
It follows from (\ref{nabVh}) and the definition of $\mathcal{Z}^h$ in (\ref{Zh}) that:
$$\nabla V^h = h\int_{-1/2}^{1/2}(\bar R^h)^{\trsp}\mathcal{Z}^h_{3\times 2}\dd x_3
+ S^h \big(\nabla y_0 + \mathcal{O}(h^2)\big)$$
In virtue of (\ref{Zh}), (\ref{compu}) and (\ref{Ssym}), we obtain
convergence, weakly in $\mathbb{E}$:
$$\frac{1}{h}\big((\nabla y_0)^{\trsp}\nabla V^h\big)_{\sym}
\rightharpoonup \Big((\nabla y_0)^{\trsp} \bar R^{\trsp}
\mathcal{Z}(x',0)_{3\times 2}\Big)_{\sym} +
\int_{-1/2}^{1/2}\frac{x_3^{n+1}}{(n+1)!}\dd x_3 \big((\nabla
y_0)^{\trsp}\nabla \vec b_{n+1}\big)_{\sym},$$
which allows to conclude, by Lemma \ref{compact} (ii):
\begin{equation}\label{trace}
\big(B_0^{\trsp}\bar R^{\trsp}\mathcal{Z}(x', 0)\big)_{2\times 2, \sym}
= \mathbb{S} - \delta_{n+1} \big((\nabla
y_0)^{\trsp}\nabla \vec b_{n+1}\big)_{\sym}.
\end{equation}
This ends the proof of Lemma, in virtue of (\ref{comp2}), 
(\ref{compu}), (\ref{trace}) and recalling definitions (\ref{Qform}).
\end{proof}

\section{Relations between (i) and (iii) of Theorem
  \ref{teo1} and a proof of Theorem \ref{teo2} (iii)}\label{sec_geometry}

In this section we show the relation between the defining quantities
appearing in conditions (i) and (iii) of Theorem \ref{teo1}.
Equivalence of (i) and (iii) at $n=2$ has been shown in \cite{LL}, building on the previous results in
\cite{BLS, LRR}, while the proof of the general case will be carried
out by induction on $n\geq 2$. We start by introducing some notation that allows for
a systematic approach.

\medskip

Define the smooth matrix fields $\{\Gamma_a:\bar\Omega^1\to\R^{3\times
3}\}_{a=1\ldots 3}$ by setting their coefficients
$(\Gamma_a)_{bc}=\Gamma^b_{ac}$ to be the usual Christoffel symbols
$\displaystyle \Gamma^b_{ac} = \frac{1}{2}\sum_{m=1}^3G^{bm}\big(\partial_b G_{mc} +
\partial_cG_{mb} - \partial_mG_{bc}\big)$ of the metric $G$. Recall
the standard notation for the coefficients of the inverse:
$(G^{-1})_{ab}=G^{ab}$. Since the Levi-Civita connection is torsion-free,
it follows that $\Gamma_ae_b = \Gamma_b e_a$ for all $a,b=1\ldots 3$
and also, the Riemann curvature tensor is expressed by, for all $c,d=1\ldots 3$:
\begin{equation*}
\begin{split}
& \big[R^a_{b,cd}\big]_{a,b=1\ldots 3} = \big(\partial_c\Gamma_d + \Gamma_c\Gamma_d\big)
- \big(\partial_d\Gamma_c + \Gamma_d\Gamma_c\big),\\
& \big[R_{ab,cd}\big]_{a,b=1\ldots 3} = G \big[R^a_{b,cd}\big]_{a,b=1\ldots 3} ~.
\end{split}
\end{equation*}
Given a matrix field $F:\Omega^1\to\R^{3\times 3}$, we define:
$\nabla_a F = \partial_a F + \Gamma_a F$  for each $a=1\ldots 
3$, so that $(\nabla_a F)e_b$ coincides with the usual covariant
derivative of vector fields: $\nabla_a(Fe_b)$. It also follows that:
\begin{equation*}
\begin{split}
 \nabla_c\nabla_dF - \nabla_d\nabla_cF =
 \big[R^a_{b,cd}\big]_{a,b=1\ldots 3} F \qquad \mbox{and} \qquad \nabla_{a}(F_1F_2) = (\nabla_a F_1)F_2 + F_1\partial_a F_2.
\end{split}
\end{equation*}

We now partially reprove
the mentioned statements at $n=1,2$ for completeness of presentation.

\begin{lemma}\label{n1}
Assume that there exist smooth fields $y_0, \vec
b_1:\bar\omega\to\R^3$ such that the matrix field: $B_0=\big[\partial_1
y_0, \partial_2 y_0, \vec b_1\big]$ has positive determinant and such that:
\begin{equation*}
B_0^{\trsp}B_0=G(x',0) \;\mbox{ and } \;\big((\nabla y_0)^{\trsp}\nabla\vec
b_1\big)_{\sym}=\frac{1}{2}\partial_3G(x',0)_{2\times 2} \qquad \mbox{ for all }\; x'\in\omega. 
\end{equation*}
Then:
\begin{itemize}
\item[(i)] $\partial_iB_0 = B_0\Gamma_i$ for all $i=1\ldots 2$, and in
  particular: $\partial_i\vec b_1 = B_0\Gamma_3e_i$.
\item[(ii)] $R^a_{b,ij}(x',0) = R_{ab, ij}(x',0)=0$ for all
  $x'\in\omega$ and all $a,b=1\ldots 3$, $i,j=1\ldots 2$.
\item[(iii)] There exists a unique smooth field $\vec
  b_2:\bar\omega\to\R^3$ such that defining the matrix field $B_1=\big[\partial_1\vec
b_1, ~\partial_2 \vec b_1, ~\vec b_2\big]$, there holds: 
  $\displaystyle \big(B_0^{\trsp}B_1\big)_{\sym}=\frac{1}{2}\partial_3
  G(x',0)$ for all $x'\in \omega$. Moreover:
$$B_1 = B_0\Gamma_3 \quad \mbox{ and }\quad \partial_i\vec b_2 =
B_0\nabla_i\Gamma_3e_3 \quad \mbox{ for all }\; i=1\ldots 2.$$
\end{itemize}
\end{lemma}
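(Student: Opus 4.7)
The plan is to derive everything from the single identity $\partial_iB_0=B_0\Gamma_i$ of part (i), from which parts (ii) and (iii) follow by Schwarz-type manipulations and an algebraic ansatz. The motivating picture is that if $\hat y:\Omega^1\to\R^3$ were an actual orientation-preserving isometric immersion of $G$, then the frame $\nabla\hat y$ would satisfy $\partial_i(\nabla\hat y)=(\nabla\hat y)\Gamma_i$ for every $i$, because $\partial_i\partial_j\hat y=\Gamma^k_{ij}\partial_k\hat y$ in a flat ambient space. Here no such $\hat y$ exists, but the hypotheses provide enough of the hypothetical frame at $x_3=0$ (the first two columns and the appropriate jet of the third column) to force the same relation when $i=1,2$.

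For (i), I would set $A_i=B_0^{-1}\partial_iB_0$ and $X_i=G(A_i-\Gamma_i)$ for $i=1,2$. Differentiating $B_0^{\trsp}B_0=G(x',0)$ in $x_i$ yields $(GA_i)_{\sym}=\tfrac12\partial_iG$, while the standard Christoffel identity $G_{ad}\Gamma^d_{cb}=\tfrac12(\partial_bG_{ac}+\partial_cG_{ab}-\partial_aG_{bc})$ gives $(G\Gamma_i)_{\sym}=\tfrac12\partial_iG$, so each $X_i$ is skew-symmetric. It then remains to pin down the three skew entries of $X_1$ and of $X_2$; this index bookkeeping is the real (though not deep) obstacle of the proof. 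Schwarz's equality $\partial_1\partial_2y_0=\partial_2\partial_1y_0$ translates into $A_1e_2=A_2e_1$, and combining with $\Gamma_1e_2=\Gamma_2e_1$ gives $X_1e_2=X_2e_1$, accounting for three of the six unknowns. The other three come from the hypothesis on $\vec b_1$: rewriting $\partial_j\vec b_1=B_0A_je_3$ and using $(G\Gamma_j)_{i3}=(G\Gamma_3)_{ij}$, the condition $\big((\nabla y_0)^{\trsp}\nabla\vec b_1\big)_{\sym}=\tfrac12\partial_3G(\cdot,0)_{2\times 2}$ becomes $(X_j)_{i3}+(X_i)_{j3}=0$ for $i,j=1,2$. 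A direct inspection of the resulting linear system on the six entries $(X_k)_{12},(X_k)_{13},(X_k)_{23}$, $k=1,2$, forces each to vanish. The identity $\partial_i\vec b_1=B_0\Gamma_3e_i$ is then read off by applying $\partial_iB_0=B_0\Gamma_i$ to $e_3$ and using $\Gamma_ie_3=\Gamma_3e_i$.

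Part (ii) I would obtain by differentiating $\partial_jB_0=B_0\Gamma_j$ in $x_i$ (with $i,j=1,2$) to get $\partial_i\partial_jB_0=B_0(\partial_i\Gamma_j+\Gamma_i\Gamma_j)$ at $x_3=0$, equating the two orderings of $i$ and $j$ via Schwarz, and using the invertibility of $B_0$ to obtain $\partial_i\Gamma_j+\Gamma_i\Gamma_j-\partial_j\Gamma_i-\Gamma_j\Gamma_i=0$ on $\omega$. The formula for $R^a_{b,ij}$ recalled in the paper then gives $R^a_{b,ij}(x',0)=0$, and lowering the index by $G$ yields $R_{ab,ij}(x',0)=0$ as well.

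For (iii), the linear conditions $(B_0^{\trsp}B_1)_{\sym}=\tfrac12\partial_3G(x',0)$ split into three equations that are automatically satisfied by the $2\times 2$ hypothesis on $\vec b_1$, and three equations prescribing the inner products of the unknown $\vec b_2$ with the basis $\partial_1y_0,\partial_2y_0,\vec b_1$; invertibility of $B_0$ then fixes $\vec b_2$ uniquely. To identify $\vec b_2$ explicitly I would take the ansatz $B_1=B_0\Gamma_3$: its first two columns equal $B_0\Gamma_3e_i=B_0\Gamma_ie_3=\partial_i\vec b_1$ by part (i) and torsion-freeness, while the required symmetric-part condition reduces to $(G\Gamma_3)_{\sym}=\tfrac12\partial_3G$, again the Christoffel identity. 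Differentiating $\vec b_2=B_0\Gamma_3e_3$ and using (i) finally gives $\partial_i\vec b_2=B_0(\partial_i\Gamma_3+\Gamma_i\Gamma_3)e_3=B_0\nabla_i\Gamma_3e_3$, as claimed.
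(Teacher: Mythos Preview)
Your proof is correct and follows essentially the same route as the paper. The only difference is organizational: for part (i) the paper computes each entry of $B_0^{\trsp}\partial_iB_0$ directly and matches it with the corresponding entry of $G\Gamma_i$, whereas you package the same computations by showing the difference $X_i=B_0^{\trsp}\partial_iB_0-G\Gamma_i$ is skew and then killing its three off-diagonal entries via the linear system coming from $X_1e_2=X_2e_1$ and $(X_j)_{i3}+(X_i)_{j3}=0$; parts (ii) and (iii) are identical to the paper's argument.
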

\begin{proof}
{\bf 1.} One easily calculates, by a repeated use of the assumed identities,
that: $\langle \partial_i y_0, \partial_j\vec b_1\rangle = \partial_j
G_{i3} - \langle \partial_{ij} y_0, \vec b_1\rangle = \partial_j
G_{i3} - \partial_iG_{j3} + \langle \partial_{j} y_0, \partial_i\vec
b_1\rangle$ and thus: $\partial_3 G_{ij}= \langle \partial_{i} y_0, \partial_j\vec b_1\rangle  
+ \langle \partial_{j} y_0, \partial_i\vec b_1\rangle = \partial_j
G_{i3} - \partial_i G_{j3} + 2 \langle \partial_{j}
y_0, \partial_i\vec b_1\rangle$, for all $i,j=1\ldots 2$, where all
the identities are taken on $\omega\times \{0\}$. Thus:
\begin{equation}\label{alpha}
\langle \partial_{j} y_0, \partial_i\vec b_1\rangle =
\frac{1}{2}\big(\partial_3 G_{ij}+ \partial_i G_{j3} - \partial_j
G_{i3} = \big(G\Gamma_3\big)_{ji}\quad\mbox{ for all }\; i,j=1\ldots 2.
\end{equation}
Secondly: $\langle \partial_j y_0, \partial_{ik}y_0\rangle
= \partial_{i}G_{jk}- \langle \partial_k y_0, \partial_{ij}y_0\rangle
= \partial_iG_{jk}-\partial_jG_{ik}+ \langle \partial_i
y_0, \partial_{jk}y_0\rangle
= \partial_iG_{jk}-\partial_jG_{ik}+ \partial_k G_{ij} - \langle \partial_j
y_0, \partial_{ik}y_0\rangle$, which results in:
 \begin{equation*}
\langle \partial_{j} y_0, \partial_{ik}y_0\rangle =
\frac{1}{2}\big(\partial_i G_{jk}+ \partial_k G_{ij} - \partial_j
G_{ik}\big) = \big(G\Gamma_i\big)_{jk}\quad\mbox{ for all }\; i,j,k=1\ldots 2.
\end{equation*}
Thirdly, from (\ref{alpha}) we obtain:
\begin{equation*}
\langle \vec b_1, \partial_{ik}y_0\rangle
= \partial_iG_{k3}-\langle \partial_i\vec b_1, \partial_ky_0\rangle =
\frac{1}{2}\big(\partial_i G_{k3}+ \partial_k G_{i3} - \partial_3
G_{ik}\big) = \big(G\Gamma_i\big)_{3k}\quad\mbox{ for all }\; i,k=1\ldots 2.
\end{equation*}
Finally: $\langle \vec b_1, \partial_i \vec b_1\rangle =
\frac{1}{2}\partial_i G_{33} = \big(G\Gamma_i\big)_{33}$, so that the
last two identities yield:
$$B_0^{\trsp}\partial_i B_0 = G\Gamma_i \quad \mbox{ for all }\; i =
1\ldots 2, \quad \mbox{ on }\; \omega \times \{0\}.$$
This proves (i) and further: $\partial_i\vec b_1 = B_0\Gamma_ie_3 =
B_0\Gamma_3e_i$, as claimed. 

\medskip

{\bf 2.} Using (i) we compute:
\begin{equation*}
\begin{split}
0 & =\partial_{ij}B_0 - \partial_{ji}B_0 = \partial_i\big(B_0\Gamma_j\big) - 
\partial_j\big(B_0\Gamma_i\big) = B_0\Gamma_i\Gamma_j +
B_0\partial_i\Gamma_j - \big(B_0\Gamma_j\Gamma_i +
B_0\partial_j\Gamma_i\big) \\ & = -B_0 \big[R^k_{s,ij}(\cdot, 0)\big]_{k,s=1\ldots
  3}  \quad\mbox{ for all }\; i,j=1\ldots 2.
\end{split}
\end{equation*}
which implies (ii). For (iii), uniqueness of $\vec b_2$ is
obvious, while $\vec b_2 = B_0\Gamma_3e_3$ follows from the
requested defining identity, in view of (\ref{alpha}). The covariant
derivative formula is a consequence of (i).
\end{proof}

\begin{lemma}\label{n2}
Assume that there exist smooth fields $y_0, \vec
b_1, \vec b_2:\bar\omega\to\R^3$ such that the matrix field: $B_0=\big[\partial_1
y_0, ~\partial_2 y_0, ~\vec b_1\big]$ has positive determinant and that
together with $B_1=\big[\partial_1\vec b_1, ~\partial_2 \vec b_1, ~\vec b_2\big]$
it satisfies:
\begin{equation*}
\begin{split}
& B_0^{\trsp}B_0=G(x',0) \;\mbox{ and }
\;\big(B_0^{\trsp}B_1\big)_{\sym}=\frac{1}{2}\partial_3 G(x',0)\\
& \big((\nabla y_0)^{\trsp}\nabla\vec b_2\big)_{\sym} + (\nabla \vec
b_1)^{\trsp}\nabla \vec b_1 = \frac{1}{2}\partial_{33}G(x',0)_{2\times 2}
\qquad \mbox{  for all }\; x'\in\omega. 
\end{split}
\end{equation*}
Then:
\begin{itemize}
\item[(i)] $R_{ab, cd}(x',0)=0$ for all
  $x'\in\omega$ and all $a,b,c,d=1\ldots 3$.
\item[(ii)] There exists a unique smooth field $\vec
  b_3:\bar\omega\to\R^3$ such that defining the matrix field $B_2=\big[\partial_1\vec
b_2, ~\partial_2 \vec b_2, ~\vec b_3\big]$, there holds: 
  $\displaystyle \big(B_0^{\trsp}B_2\big)_{\sym} + B_1^{\trsp}B_1 =\frac{1}{2}\partial_{33}
  G(x',0)$ for all $x'\in \omega$. Moreover:
$$B_2 = B_0\nabla_3\Gamma_3 \quad \mbox{ and }\quad \partial_i\vec b_3 =
B_0\nabla_i\nabla_3\Gamma_3e_3 \quad \mbox{ for all }\; i=1\ldots 2.$$
\end{itemize}
\end{lemma}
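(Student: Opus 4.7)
The first two hypotheses on $B_0,B_1$ coincide with those of Lemma \ref{n1}, so by the uniqueness in Lemma \ref{n1}(iii) we must have $B_1 = B_0\Gamma_3$; combined with Lemma \ref{n1}(i), this gives $\partial_i B_1 = B_0\nabla_i\Gamma_3$ and thus $\partial_i\vec b_2 = B_0(\nabla_i\Gamma_3)e_3$ for $i = 1,2$. Lemma \ref{n1}(ii) already yields $R_{ab,ij}(x',0)=0$ for $i,j\in\{1,2\}$ and all $a,b\in\{1,\ldots,3\}$; by the Riemann symmetries $R_{ab,cd} = -R_{ba,cd} = -R_{ab,dc} = R_{cd,ab}$, what remains for (i) is to prove $R_{i3,j3}(x',0) = 0$ for $i,j \in \{1,2\}$.

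To this end I would introduce the auxiliary map
\[
Y(x',x_3) := y_0(x') + x_3\vec b_1(x') + \tfrac{x_3^2}{2}\vec b_2(x'),
\]
which, thanks to $\det B_0 > 0$, is a smooth diffeomorphism on $\omega\times(-\varepsilon,\varepsilon)$ for some small $\varepsilon>0$. The pullback metric $G' := (\nabla Y)^{\trsp}(\nabla Y)$ satisfies $R(G')\equiv 0$ on its domain (being a pullback of the Euclidean metric). A direct Taylor expansion in $x_3$, using the three hypotheses, shows that $G$ and $G'$ coincide at every $(x',0)$ together with all first derivatives, all tangential second derivatives $\partial_{ij}G$, all mixed second derivatives $\partial_{i3}G$ for $i\in\{1,2\}$, and with the $2\times 2$ minor of $\partial_{33}G$. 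These are precisely the derivatives of $G$ entering the classical coordinate formula for $R_{i3,j3}(x',0)$; the Christoffel products there involve only $G$ and $\partial G$, hence also match. Consequently $R_{i3,j3}(x',0) = R^{G'}_{i3,j3}(x',0) = 0$, proving (i).

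For (ii), I would set $\vec b_3 := B_0(\nabla_3\Gamma_3)e_3$ and $B_2 := \big[\partial_1\vec b_2,~\partial_2\vec b_2,~\vec b_3\big]$. The identity $B_2 = B_0\nabla_3\Gamma_3$ reduces, in the first two columns, to checking $(\nabla_3\Gamma_3)e_i = (\nabla_i\Gamma_3)e_3$ at $x_3=0$. This follows by combining the torsion-free identity $\Gamma_3 e_i = \Gamma_i e_3$ (which gives $\nabla_3\Gamma_3 e_i = \nabla_3\Gamma_i e_3$) with the matrix equation $\nabla_i\Gamma_3 - \nabla_3\Gamma_i = \big[R^a_{b,i3}\big]$, vanishing at $x_3=0$ by part (i). The defining constraint then reads
\[
(B_0^{\trsp}B_2)_{\sym} + B_1^{\trsp}B_1 = (G\nabla_3\Gamma_3)_{\sym} + \Gamma_3^{\trsp}G\Gamma_3,
\]
whose right-hand side equals $\tfrac{1}{2}\partial_{33}G(x',0)$ by the purely algebraic identity obtained by twice differentiating $\partial_a G = G\Gamma_a + \Gamma_a^{\trsp}G$ in the $x_3$-direction. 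Uniqueness of $\vec b_3$ is immediate since the $(i,3)$ and $(3,3)$ entries of the constraint provide three scalar equations on $B_0^{\trsp}\vec b_3$ and $B_0$ is invertible; finally, $\partial_i\vec b_3 = B_0(\nabla_i\nabla_3\Gamma_3)e_3$ is obtained by differentiating the definition of $\vec b_3$ and using $\partial_iB_0 = B_0\Gamma_i$.

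The main obstacle is to package the three hypotheses into the vanishing of $R_{i3,j3}(x',0)$ without grinding through the Christoffel expansions of $\partial_{33}G_{ij}$ as in the Lemma \ref{n1} proof. The pullback device via $Y$ achieves this for free: the matching of the derivatives of $G$ that enter $R_{i3,j3}(x',0)$ with those of $G' = (\nabla Y)^{\trsp}\nabla Y$ is exactly what the three hypotheses encode, while flatness of $G'$ is tautological.
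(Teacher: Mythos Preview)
Your argument is correct. Part (ii) follows the paper's route essentially verbatim: both you and the paper set $B_2=B_0\nabla_3\Gamma_3$, verify the first two columns via $\nabla_3\Gamma_3 e_i=\nabla_3\Gamma_i e_3=\nabla_i\Gamma_3 e_3$ (torsion-freeness plus $[R^a_{b,i3}]=0$), and check the defining constraint using the identity $\tfrac{1}{2}\partial_{33}G=(G\nabla_3\Gamma_3)_{\sym}+\Gamma_3^{\trsp}G\Gamma_3$, which is exactly the paper's formula (\ref{G2}).

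For part (i) your approach is genuinely different. The paper works purely algebraically: it expands the third hypothesis using $B_1=B_0\Gamma_3$ and $\partial_j\vec b_2=B_0\nabla_j\Gamma_3 e_3$, subtracts the identity (\ref{G2}), and reads off $R_{i3,j3}(x',0)=0$ as the residual. You instead introduce the second-order jet $Y=y_0+x_3\vec b_1+\tfrac{x_3^2}{2}\vec b_2$ and compare $G$ with the flat pullback $G'=(\nabla Y)^{\trsp}\nabla Y$. Your key observation---that the three hypotheses force agreement of $G,G'$ at $(x',0)$ through precisely those second derivatives ($\partial_{ij}G$, $\partial_{i3}G$, and only the $2\times 2$ block of $\partial_{33}G$) entering the coordinate formula
\[
R_{i3,j3}=\tfrac{1}{2}\big(\partial_{j3}G_{i3}+\partial_{i3}G_{j3}-\partial_{ij}G_{33}-\partial_{33}G_{ij}\big)+G_{ef}\big(\Gamma^e_{3j}\Gamma^f_{i3}-\Gamma^e_{33}\Gamma^f_{ij}\big)
\]
---is correct and makes the vanishing conceptually transparent. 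The trade-off: your device is cleaner here but is tied to the specific structure of $R_{i3,j3}$, whereas the paper's direct computation (rewriting the hypothesis as $\langle Ge_i,(\nabla_j\Gamma_3-\nabla_3\Gamma_j)e_3\rangle_{\sym}=0$) is the template that scales to the inductive step in Lemma~\ref{industep}, where one must extract $\partial_3^{(n-1)}R_{i3,j3}(x',0)$ from the order-$(n{+}1)$ hypothesis.
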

\begin{proof}
Observe first that for all $a,b=1\ldots 3$ we have:
\begin{equation}\label{G2}
\begin{split}
\langle \partial_{33}G e_a, e_b\rangle & =\partial_3\big(\langle
G\Gamma_3 e_a, e_b\rangle + \langle G\Gamma_3 e_b, e_a\rangle\big) \\
& = \langle \nabla_3\Gamma_ae_3, Ge_b\rangle + \langle
\nabla_3\Gamma_be_3, Ge_a\rangle + 2 \langle G\Gamma_3e_a, \Gamma_3e_b\rangle.
\end{split}
\end{equation}
Consequently, and using Lemma \ref{n1} (iii), the last assumed condition is equivalent to:
\begin{equation*}
\begin{split}
0 & = \langle B_0e_i, \partial_j\vec b_2\rangle +
2\langle \partial_i\vec b_1, \partial_j\vec b_1\rangle + \langle
B_0e_j, \partial_i\vec b_2\rangle - \langle\partial_{33}G e_i, e_j\rangle
\\ & = \langle Ge_i, \nabla_j\Gamma_3 e_3\rangle + 2 \langle G\Gamma_3 e_i,
\Gamma_3 e_j\rangle + \langle Ge_j, \nabla_i\Gamma_3 e_3\rangle
\\ & \qquad\qquad 
- \Big(\langle \nabla_3\Gamma_je_3, Ge_i\rangle + \langle
\nabla_3\Gamma_ie_3, Ge_j\rangle + 2 \langle G\Gamma_3e_j, \Gamma_3e_j\rangle\Big)
\\ & = \langle Ge_i, \big[R^a_{3j3}(\cdot, 0)\big]_{a=1\ldots
  3}\rangle + \langle Ge_j, \big[R^a_{3j3}(\cdot, 0)\big]_{a=1\ldots 3}\rangle \\ &
= 2 R_{i3,j3}(\cdot, 0) \quad \mbox{on }\;\omega, \quad \mbox{for all }\; i,j=1\ldots 2.
\end{split}
\end{equation*}
The above proves (i), in virtue of Lemma \ref{n1} (ii) that guarantees
$R_{ab,ij}(\cdot, 0)=0$ for all $a,b=1\ldots 3$, $i,j=1\ldots 2$.
To show (ii), we observe that by Lemma \ref{n1} and by (i):
$$B_2e_i = \partial_i \vec b_2 = B_0\nabla_i \Gamma_3e_3 =
B_0\nabla_3\Gamma_ie_3 = B_0\nabla_3\Gamma_3e_i
\quad \mbox{for all }\; i,j=1\ldots 2$$
and also, $\displaystyle (B_0^{\trsp}B_2)_{\sym}
+ B_1^{\trsp}B_1 - \frac{1}{2}\partial_{33} G(x',0) =
(B_0^{\trsp}B_2)_{\sym} + \Gamma_3^{\trsp} G\Gamma_3 -
\Big((G\nabla_3\Gamma_3)_{\sym} + \Gamma_3^{\trsp} G\Gamma_3\Big)$, in
view of (\ref{G2}), so $B_2=B_0\nabla_3 \Gamma_3$ satisfies the defining
relation. Finally, $\partial_i\vec b_3
= \partial_i\big(B_0\nabla_3\Gamma_3e_3\big) =
B_0\nabla_i\nabla_3\Gamma_3e_3$ results from Lemma \ref{n1} (i).
\end{proof}

\medskip

We state the following two useful observations:

\begin{lemma}\label{lemmaG}
For all $n\geq 0$ there holds:
$$\partial_3^{(n+1)}G(x',0) = 2\Big(G\nabla_3^{(n)}\Gamma_3\Big)_{\sym}
+ \sum_{k=1}^n
\binom{n+1}{k}\big(\nabla_3^{(k-1)}\Gamma_3\big)^{\trsp}G\nabla_3^{(n-k)}\Gamma_3
\quad \mbox{for all }\; x'\in\omega. $$
\end{lemma}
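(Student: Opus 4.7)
My plan is to prove the identity by induction on $n \geq 0$. The base case $n = 0$ asserts $\partial_3 G = 2(G\Gamma_3)_{\sym}$, which is exactly the metric compatibility of the Levi--Civita connection: in coordinates, $\partial_c G_{ab} = G_{ak}\Gamma^k_{cb} + G_{kb}\Gamma^k_{ca}$, which specializes at $c = 3$ to $\partial_3 G = G\Gamma_3 + \Gamma_3^{\trsp}G$, whose symmetrization is the claim. This base identity, together with the definition $\partial_3 F = \nabla_3 F - \Gamma_3 F$, yields the operational formula
$$
\partial_3(F^{\trsp}\, G\, H) \;=\; (\nabla_3 F)^{\trsp}\, G\, H \;+\; F^{\trsp}\, G\, \nabla_3 H,
$$
valid for any smooth $F, H : \Omega^1 \to \R^{3\times 3}$: after expanding the left-hand side and substituting the base identity, the four $\Gamma_3$-terms cancel in pairs and only the covariant-derivative contributions remain. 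Specializing to $F = Id_3$ gives $\partial_3(G H) = G\,\nabla_3 H + \Gamma_3^{\trsp}\, G\, H$, which will handle the leading symmetric term in the inductive hypothesis.

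For the inductive step, I introduce the shorthand $A'_k = (\nabla_3^{(k-1)}\Gamma_3)^{\trsp}\, G\, \nabla_3^{(n+1-k)}\Gamma_3$ for the summands appearing in the target formula at level $n+1$. Differentiating the $k$-th summand of the formula at level $n$ via the operational identity gives exactly $A'_k + A'_{k+1}$. Reindexing the two resulting sums and applying Pascal's rule $\binom{n+1}{k-1} + \binom{n+1}{k} = \binom{n+2}{k}$ produces the correct interior coefficients $\binom{n+2}{k}$ for $2 \le k \le n$, together with coefficients $n+1$ at the two endpoints $k = 1$ and $k = n+1$. At the same time, differentiating $2(G\nabla_3^{(n)}\Gamma_3)_{\sym}$ produces $2(G\nabla_3^{(n+1)}\Gamma_3)_{\sym} + 2(\Gamma_3^{\trsp}\, G\, \nabla_3^{(n)}\Gamma_3)_{\sym}$. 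The transposition relation $(\Gamma_3^{\trsp}\, G\, \nabla_3^{(n)}\Gamma_3)^{\trsp} = (\nabla_3^{(n)}\Gamma_3)^{\trsp}\, G\, \Gamma_3$ then identifies $2(\Gamma_3^{\trsp}\, G\, \nabla_3^{(n)}\Gamma_3)_{\sym} = A'_1 + A'_{n+1}$, and adding this to the two endpoint contributions boosts their coefficients from $n+1$ to $n+2 = \binom{n+2}{1} = \binom{n+2}{n+1}$. The resulting expression is precisely the claim at level $n+1$.

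The proof is essentially combinatorial and I foresee no analytic obstacle: the operational identity reduces every differentiation to covariant derivatives modulo the vanishing of $\nabla G$, and the rest is bookkeeping. The only delicate point is the endpoint accounting, where the $A'_1$ and $A'_{n+1}$ contributions coming from the symmetric leading piece must combine with the boundary indices produced by the reindexing of the two sums to match the Pascal-rule output at $k=1$ and $k=n+1$. This is where the transposition symmetry $(A'_1)^{\trsp} = A'_{n+1}$ is essential.
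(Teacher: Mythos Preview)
Your proof is correct and follows essentially the same inductive strategy as the paper: induction on $n$, base case via metric compatibility $\partial_3 G = G\Gamma_3 + \Gamma_3^{\trsp}G$, differentiation of the formula at level $n$, and Pascal's rule to recombine the binomial coefficients. The only difference is organizational: you package the metric compatibility into the clean identity $\partial_3(F^{\trsp}GH) = (\nabla_3 F)^{\trsp}GH + F^{\trsp}G\,\nabla_3 H$ before differentiating, whereas the paper performs the same cancellations inline term by term; the substance and the endpoint bookkeeping are identical.
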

\begin{proof}
The proof follows by induction. For $n=0$, the statement is obviously
true. Assume that it is true for some $n-1$, then:
\begin{equation*}
\begin{split}
\partial_3^{(n+1)}G(x',0) & = \partial_3\bigg(2\big(G\nabla_3^{(n-1)}\Gamma_3\big)_{\sym}
+ \sum_{k=1}^{n-1}
\binom{n}{k}\big(\nabla_3^{(k-1)}\Gamma_3\big)^{\trsp}G\nabla_3^{(n-1-k)}\Gamma_3\bigg) \\ &
= G\nabla_3^{(n)}\Gamma_3 + \big(\nabla_3^{(n)}\Gamma_3\big)^{\trsp}G
+ \Gamma_3^{\trsp}G\nabla_3^{(n-1)}\Gamma_3 +
\big(\nabla_3^{(n-1)}\Gamma_3\big)^{\trsp}G\Gamma_3 \\ & \quad +
\sum_{k=1}^{n-1}\binom{n}{k} \Big(
\big(\nabla_3^{(k-1)}\Gamma_3\big)^{\trsp}G\nabla_3^{(n-k)}\Gamma_3 
+ \big(\nabla_3^{(k)}\Gamma_3\big)^{\trsp}G\nabla_3^{(n-k-1)}\Gamma_3\Big)
\\ & = G\nabla_3^{(n)}\Gamma_3 + \big(\nabla_3^{(n)}\Gamma_3\big)^{\trsp}G
+ \Gamma_3^{\trsp}G\nabla_3^{(n-1)}\Gamma_3 +
\big(\nabla_3^{(n-1)}\Gamma_3\big)^{\trsp}G\Gamma_3 \\ & \quad +
\sum_{k=1}^{n-1}\Big( \binom{n}{k} +\binom{n}{k-1}\Big) 
\big(\nabla_3^{(k-1)}\Gamma_3\big)^{\trsp}G\nabla_3^{(n-k)}\Gamma_3 \\ &
\quad - \binom{n}{0}\Gamma_3^{\trsp}G \nabla_3^{(n-1)}\Gamma_3 + 
\binom{n}{n-1}\big(\nabla_3^{(n-1)}\Gamma_3\big)^{\trsp}G \Gamma_3.
\end{split}
\end{equation*}
Collecting all the terms and recalling that $\binom{n}{k} +\binom{n}{k-1}=\binom{n+1}{k}$
implies the result.
\end{proof}

\begin{lemma}\label{Riem_zero}
Assume that $R_{12,12}(x',0)=R_{12,13}(x',0)=R_{12,23}(x',0)=0 $ for all
$x'\in \omega$ and also that $\partial_3^{(k)}R_{i3,j3}(x',0)=0$ for
all $k=0\ldots n$, all $i,j=1\ldots 2$ and all $x'\in\omega$. Then all
the mixed partial derivatives of both $R_{ab,cd}$ and $R^a_{b,cd}$, of any order up to $n$,
are zero on $\omega$, for all $a,b,c,d=1\ldots 3$.
\end{lemma}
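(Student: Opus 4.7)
Our plan is to exploit the symmetries of the Riemann tensor together with the second Bianchi identity. By antisymmetry in each pair and the pair-swap symmetry $R_{ab,cd}=R_{cd,ab}$, in three dimensions $R_{ab,cd}$ is determined by six independent components, which we split as $\mathcal{R}_1 = \{R_{12,12},\,R_{12,13},\,R_{12,23}\}$ and $\mathcal{R}_2 = \{R_{13,13},\,R_{13,23},\,R_{23,23}\}$. The hypothesis asserts that every $F\in\mathcal{R}_1$ vanishes on $\omega\times\{0\}$, while every $F\in\mathcal{R}_2$ satisfies $\partial_3^{(k)}F(\cdot,0)\equiv 0$ on $\omega$ for all $k=0,\ldots,n$.

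The main tool will be the second Bianchi identity $\nabla_e R_{ab,cd}+\nabla_c R_{ab,de}+\nabla_d R_{ab,ec}=0$, specialized at $(e,c,d)=(3,1,2)$ and $(ab)\in\{(12),(13),(23)\}$. Expanding $\nabla$ as $\partial$ plus Christoffel corrections, and using the pair-swap symmetry to rewrite $R_{13,12}=R_{12,13}$ and $R_{23,12}=R_{12,23}$, we obtain three scalar identities of the schematic form
\[\partial_3 F = \partial_2 G_F - \partial_1 H_F + (\Gamma\cdot R)_F, \qquad F\in\mathcal{R}_1,\]
where $(\Gamma\cdot R)_F$ denotes a fixed pointwise linear combination of products of Christoffel symbols and curvature components (with no further derivatives involved), and where $(G_F,H_F)$ equals $(R_{12,13},R_{12,23})$ for $F=R_{12,12}$, equals $(R_{13,13},R_{13,23})$ for $F=R_{12,13}$, and equals $(R_{23,13},R_{23,23})$ for $F=R_{12,23}$.

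We then proceed by a double induction: primary on the total derivative order $m=0,1,\ldots,n$, and, for each fixed $m$, secondary on the $\partial_3$-exponent $\alpha_3$ of the mixed partial $\partial_1^{\alpha_1}\partial_2^{\alpha_2}\partial_3^{\alpha_3}F$ of total order $m$. The primary base $m=0$ is the hypothesis itself. In the inductive step: if $F\in\mathcal{R}_2$, the assertion is immediate, since the function $x'\mapsto \partial_3^{(\alpha_3)}F(x',0)$ is identically zero on $\omega$ (as $\alpha_3\leq n$), and hence so are all its tangential derivatives. If $F\in\mathcal{R}_1$ and $\alpha_3=0$, the tangential derivative of a function vanishing on $\omega\times\{0\}$ is zero. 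If $F\in\mathcal{R}_1$ and $\alpha_3\geq 1$, we substitute the Bianchi identity above and apply $\partial_1^{\alpha_1}\partial_2^{\alpha_2}\partial_3^{\alpha_3-1}$: the two resulting $\partial_1$ and $\partial_2$ terms are mixed partials of curvature components of the \emph{same} total order $m$ but with $\partial_3$-exponent $\alpha_3-1$, handled by the secondary induction, whereas the Leibniz expansion of $\partial_1^{\alpha_1}\partial_2^{\alpha_2}\partial_3^{\alpha_3-1}((\Gamma\cdot R)_F)$ produces a sum of products in which the derivative of $R$ has total order at most $m-1$, which vanishes at $x_3=0$ by the primary induction.

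Finally, $R^a_{b,cd}=G^{ae}R_{eb,cd}$, so the statement for the raised-index tensor follows from that for $R_{eb,cd}$ via the Leibniz rule applied to this product, since $G^{ae}$ is smooth. The principal delicate point of the plan is the well-foundedness of the double induction: each application of Bianchi must strictly lower $\alpha_3$ while preserving total order $m$, and the correction $(\Gamma\cdot R)_F$ must contribute only curvature derivatives of strictly smaller total order --- both features visible directly from the identity.
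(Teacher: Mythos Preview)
Your proposal is correct and follows essentially the same approach as the paper: both arguments use the second Bianchi identity to express a transverse derivative $\partial_3$ of the ``tangential'' curvature components $R_{ab,12}$ as tangential derivatives of the ``normal'' components $R_{ab,i3}$, modulo zero-order Christoffel--curvature corrections, and then run an induction on derivative order to kill the corrections. The only organizational differences are that the paper applies Bianchi to the raised-index tensor $R^a_{\;b,cd}$ and toggles between raised and lowered indices via the metric at each step, whereas you work throughout with the six independent lowered components and defer the raised-index statement to a single Leibniz argument at the end; and that the paper phrases the induction as a single induction on $n$, while you make the implicit secondary induction on the $\partial_3$-exponent explicit.
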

\begin{proof}
The proof proceeds by induction on $n$. For $n=0$ the result is
obviously true. Assume that it is true for some $n\geq 0$ and let the result
assumption at $n+1$ hold. Then:
\begin{equation*}
\begin{split}
& \partial^{(k)}R^a_{b,cd}(x',0) = \partial^{(k)}R_{ab,cd}(x',0) = 0
\quad \mbox{ for all }\; k=0\ldots n, \quad a,b,c,d=1\ldots 3,\\ 
& \partial_3^{(n+1)}R_{i3,j3}(x',0) = 0 \qquad \qquad\qquad \quad \;
\; \mbox{ for all }\; i,j=1\ldots 2, \quad x'\in \omega,
\end{split}
\end{equation*}
and we need to show that any partial derivatives of order $n+1$, of the Riemann
tensor's components is zero on $\omega$. This is certainly true for partial derivatives
containing $\partial_i$ for some $i=1\ldots 2$, so it suffices to
prove the claim for $\partial_3^{(n+1)}$. Below, we consider various combinations
of indices $i,j=1\ldots 2$ and $a,b=1\ldots 3$. Firstly:
\begin{equation}\label{a}
\partial_3^{(n+1)}R^a_{b,ij}= \partial_3^{(n)}\nabla_3 R^a_{b,ij}
= \partial_3^{(n)}\big(-\nabla_i R^a_{b,j3}-\nabla_j R^a_{b,31}\big)
= \partial_3^{(n)}\big(-\partial_i R^a_{b,j3}-\partial_j R^a_{b,31}\big)=0,
\end{equation}
where we used the induction assumption in the first and the third
equalities and  the second Bianchi identity in the second one.
Secondly:
\begin{equation}\label{b}
\partial_3^{(n+1)}R_{ab,ij}= \partial_3^{(n+1)}\langle
\big[G_{ap}\big]_{p=1\ldots 3}, \big[ R^p_{b,ij}\big]_{p=1\ldots
3}\rangle = \langle \big[G_{ap}\big]_{p=1\ldots 3}, \big[\partial_3^{(n+1)}R^p_{b,ij}\big]_{p=1\ldots
3}\rangle =0,
\end{equation}
where we used the induction assumption and (\ref{a}) in the last
equality. Thirdly:
\begin{equation}\label{c}
\partial_3^{(n+1)}R^a_{b,i3}= \partial_3^{(n+1)}\langle
\big[G^{ap}\big]_{p=1\ldots 3}, \big[ R_{pb,i3}\big]_{p=1\ldots
3}\rangle = \langle \big[G^{ap}\big]_{p=1\ldots 3}, \big[\partial_3^{(n+1)}R_{pb,i3}\big]_{p=1\ldots
3}\rangle =0,
\end{equation}
by using (\ref{b}) and the result assumption at $n+1$, in the last equality.
Finally: $\partial_3^{(n+1)}R_{ab,cd} = 0$ by (\ref{b}) and the result assumption.
\end{proof}

\medskip

The following is the main result of this section:

\begin{lemma}\label{industep}
Fix $n\geq 2$. Assume that there exist smooth $y_0, \{\vec
b_k\}_{k=1}^n:\bar\omega\to\R^3$ such that the matrix fields: $B_0=\big[\partial_1
y_0, ~\partial_2 y_0, ~\vec b_1\big]$ with positive determinant and 
$\{B_k=\big[\partial_1\vec b_k, ~\partial_2 \vec b_k, ~\vec b_{k+1}\big]\}_{k=1}^{n-1}$, satisfy:
\begin{equation*}
\begin{split}
& \sum_{k=0}^m\binom{m}{k}B_k^{\trsp}B_{m-k} - \partial_3^{(m)} G(x',0) =
0\quad \mbox{  for all }\; m=0\ldots n-1,\\
& 2\big((\nabla y_0)^{\trsp}\nabla\vec b_n\big)_{\sym} + \sum_{k=1}^{n-1}\binom{n}{k}(\nabla \vec
b_k)^{\trsp}\nabla \vec b_{n-k} = \partial_{3}^{(n)}G(x',0)_{2\times 2}
\qquad \mbox{  for all }\; x'\in\omega. 
\end{split}
\end{equation*}
Then:
\begin{itemize}
\item[(i)] Condition in Theorem \ref{teo1} (i) holds.
\item[(ii)] There exists a unique smooth field $\vec
  b_{n+1}:\bar\omega\to\R^3$ such that defining the matrix field $B_n=\big[\partial_1\vec
b_{n}, ~\partial_{2} \vec b_{n}, ~\vec b_{n+1} \big]$, there holds: 
  $\displaystyle \sum_{k=0}^n\binom{n}{k}B_k^{\trsp}B_{n-k} = \partial_{3}^{(n)}
  G(x',0)$ for all $x'\in \omega$. Moreover:
$$B_n = B_0\nabla_3^{(n-1)}\Gamma_3 \quad \mbox{ and }\quad \partial_i\vec b_{n+1} =
B_0\nabla_i\nabla_3^{(n-1)}\Gamma_3e_3 \quad \mbox{ for all }\; i=1\ldots 2.$$
\end{itemize}
\end{lemma}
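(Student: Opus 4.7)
The proof proceeds by induction on $n\geq 2$, with the base case $n=2$ being Lemma \ref{n2}. Assuming Lemma \ref{industep} at level $n-1$, I first observe that the hypothesis at level $n$ encodes the hypothesis at level $n-1$: the full $3\times 3$ equations for $m=0,\ldots,n-2$ are already assumed, while the $2\times 2$ condition at order $n-1$ is obtained by restricting the order-$(n-1)$ full compatibility to its $2\times 2$ minor, using $(B_0^{\trsp}B_{n-1})_{2\times 2} = (\nabla y_0)^{\trsp}\nabla \vec b_{n-1}$ and $(B_k^{\trsp}B_{n-1-k})_{2\times 2} = (\nabla \vec b_k)^{\trsp}\nabla \vec b_{n-1-k}$ for $1\leq k\leq n-2$. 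The inductive conclusion then supplies: condition (i) of Theorem \ref{teo1} at level $n-1$; the matrix identities $B_m = B_0 \nabla_3^{(m-1)}\Gamma_3$ on $\omega$ for $m=1,\ldots,n-1$; and $\partial_i \vec b_n = B_0 \nabla_i \nabla_3^{(n-2)}\Gamma_3 e_3$ for $i=1,2$. Lemma \ref{Riem_zero} further guarantees that all mixed partials of $R_{ab,cd}$ of order up to $n-3$ vanish on $\omega$.

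The crux is to extract the new curvature identity $\partial_3^{(n-2)} R_{i3,j3}(x',0)=0$ for $i,j=1,2$. Substituting the inductive formulas into the $2\times 2$ hypothesis at order $n$, each interior term becomes $(\nabla\vec b_k)^{\trsp}\nabla\vec b_{n-k} = [(\nabla_3^{(k-1)}\Gamma_3)^{\trsp} G\nabla_3^{(n-1-k)}\Gamma_3]_{2\times 2}$ via $B_0^{\trsp}B_0 = G$, while $((\nabla y_0)^{\trsp}\nabla \vec b_n)_{ij} = \langle e_i, G\nabla_j\nabla_3^{(n-2)}\Gamma_3 e_3\rangle$. Lemma \ref{lemmaG} at level $n-1$ expands $\partial_3^{(n)}G(x',0)$ with leading term $2(G\nabla_3^{(n-1)}\Gamma_3)_{\sym}$ and exactly the same interior combinatorial sum; subtracting cancels all interior contributions, forcing the $2\times 2$ symmetric matrix with $(i,j)$-entry $\langle e_i, G[\nabla_j\nabla_3^{(n-2)}\Gamma_3 e_3 - \nabla_3^{(n-1)}\Gamma_3 e_j]\rangle$ to vanish. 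I then commute $\nabla_j$ through $\nabla_3^{(n-2)}$ by iterating $\nabla_c\nabla_d F - \nabla_d\nabla_c F = [R^a_{b,cd}]F$; every correction term involves a partial derivative of $R_{ab,cd}$ of order at most $n-3$, hence vanishes on $\omega$. Using the torsion-free identity $\Gamma_3 e_j = \Gamma_j e_3$ together with $\nabla_j\Gamma_3 - \nabla_3\Gamma_j = [R^p_{q,j3}]$, the bracket reduces on $\omega$ to $\nabla_3^{(n-2)}[R^p_{3,j3}]$, and a Leibniz expansion collapses it to $\partial_3^{(n-2)}[R^p_{3,j3}]$ by the same vanishing. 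Contracting with $G$ converts $R^p_{3,j3}$ into $R_{i3,j3}$, yielding $\partial_3^{(n-2)} R_{i3,j3}(x',0) = 0$, which together with the already established level-$(n-1)$ conditions proves condition (i).

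For conclusion (ii), I set $B_n := B_0\nabla_3^{(n-1)}\Gamma_3$ and $\vec b_{n+1} := B_n e_3 = B_0\nabla_3^{(n-1)}\Gamma_3 e_3$. The identity $B_n e_i = \partial_i \vec b_n$ for $i=1,2$ follows by writing $\nabla_3^{(n-1)}\Gamma_3 e_i = \nabla_3^{(n-1)}\Gamma_i e_3$ via torsion-freeness, then invoking $\nabla_3\Gamma_i = \nabla_i\Gamma_3 - [R^p_{q,i3}]$, and commuting $\nabla_i$ with $\nabla_3^{(n-2)}$; all correction terms are killed on $\omega$ by the improved vanishing of $R$-derivatives up to order $n-2$ just established (applying Lemma \ref{Riem_zero} at the new level). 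The order-$n$ compatibility $\sum_{k=0}^n\binom{n}{k}B_k^{\trsp}B_{n-k} = \partial_3^{(n)}G(x',0)$ then matches Lemma \ref{lemmaG} at level $n-1$ verbatim. Uniqueness of $\vec b_{n+1}$ holds because the equations at the third row and column of this matrix identity determine its inner products with the basis $\partial_1 y_0, \partial_2 y_0, \vec b_1$ of $\R^3$. Finally, $\partial_i\vec b_{n+1} = B_0\nabla_i\nabla_3^{(n-1)}\Gamma_3 e_3$ follows by direct differentiation using $\partial_i B_0 = B_0\Gamma_i$, proven in Lemma \ref{n1}(i).

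The principal technical obstacle is the commutator bookkeeping in the second paragraph: one must verify that in commuting $\nabla_j$ past $\nabla_3^{(n-2)}$, every correction term carries a partial derivative of $R_{ab,cd}$ of order at most $n-3$, so that the inductive curvature hypothesis combined with Lemma \ref{Riem_zero} clears them on $\omega$. Once this is secured, the algebraic identification with $\partial_3^{(n-2)} R_{i3,j3}$ is direct.
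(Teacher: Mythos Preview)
Your proof is correct and follows essentially the same approach as the paper's: induction with base case Lemma \ref{n2}, use of the inductive formulas $B_m = B_0\nabla_3^{(m-1)}\Gamma_3$ together with Lemma \ref{lemmaG} to reduce the $2\times 2$ hypothesis to $\langle Ge_i,\nabla_j\nabla_3^{(n-2)}\Gamma_3 e_3 - \nabla_3^{(n-1)}\Gamma_j e_3\rangle$, commutation of $\nabla_j$ past the iterated $\nabla_3$'s via Lemma \ref{Riem_zero}, and identification with $\partial_3^{(n-2)}R_{i3,j3}$. The only difference is a harmless shift in the induction index (you go from $n-1$ to $n$, the paper from $n$ to $n+1$); the key commutator bookkeeping you flag is exactly what the paper encapsulates in (\ref{rr}) and (\ref{swap}).
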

\begin{proof}
{\bf 1.} The proof proceeds by induction. The statement at $n=2$ has been shown
in Lemma \ref{n2}. We now assume it to be true for some $n\geq 2$. By
Lemma \ref{Riem_zero}, we get:
\begin{equation}\label{rr}
\mbox{\begin{minipage}{13cm}
All mixed partial derivatives up to order $n-2$, of all components of
the Riemann curvature tensor, are $0$ at $\omega\times \{0\}$.
\end{minipage}}
\end{equation}
Since $B_k=\big[\partial_1\vec b_n, ~\partial_2\vec b_n, ~\vec
b_{n+1}\big]$ with $\vec b_{n+1}$ as in (ii), and recalling Lemma
\ref{lemmaG}, we obtain for all $x'\in\omega$:
\begin{equation}\label{main_curv}
\begin{split}
2\big((&\nabla y_0)^{\trsp}\nabla\vec b_{n+1}\big)_{\sym} + \sum_{k=1}^{n}\binom{n+1}{k}(\nabla \vec
b_k)^{\trsp}\nabla \vec b_{n+1-k} - \partial_{3}^{(n+1)}G(x',0)_{2\times 2}
\\ & = 2\big((\nabla y_0)^{\trsp}\nabla\vec b_{n+1}\big)_{\sym} +
\sum_{k=1}^{n}\Big(B_k^{\trsp}B_{n+1-k}\Big)_{2\times 2} \\ &
\qquad\qquad\qquad\qquad \qquad - \bigg(
2\big(G\nabla_3^{(n)}\Gamma_3\big)_{\sym}
+ \sum_{k=1}^n
\binom{n+1}{k}\big(\nabla_3^{(k-1)}\Gamma_3\big)^{\trsp}G\nabla_3^{(n-k)}\Gamma_3\bigg)_{2\times 2}
\\ & = 2\Big((\nabla y_0)^{\trsp}\nabla\vec b_{n+1} - G\nabla_3^{(n)}\Gamma_3\Big)_{\sym} 
= 2\Big[\langle Ge_i, \nabla_j\nabla_3^{(n-1)}\Gamma_3e_3 -
\nabla_3^{(n)}\Gamma_3e_j\rangle\Big]_{i,j=1\ldots 2, \;\sym} \\ & 
= 2\Big[\langle Ge_i, \nabla_j\nabla_3^{(n-1)}\Gamma_3e_3 -
\nabla_3^{(n)}\Gamma_je_3\rangle\Big]_{i,j=1\ldots 2, \;\sym}.
\end{split}
\end{equation}
By (\ref{rr}) we can consecutively swap the order of all the covariant
derivatives on $\omega\times \{0\}$ in:
$$ \nabla_j\nabla_3^{(n-1)}\Gamma_3 =
\nabla_3\nabla_j\nabla_3^{(n-2)}\Gamma_3 =
\nabla_3^{(2)}\nabla_j\nabla_3^{(n-3)}\Gamma_3 =
\nabla_3^{(3)}\nabla_j\nabla_3^{(n-4)}\Gamma_3 = (\ldots) =
\nabla_3^{(n-1)}\nabla_j\Gamma_3, $$
so that:
\begin{equation}\label{swap} 
\nabla_j\nabla_3^{(n-1)}\Gamma_3 - \nabla_3^{(n)}\Gamma_j = \nabla_3^{(n-1)}\big(\nabla_j\Gamma_3
- \nabla_3\Gamma_j\big) =
\nabla_3^{(n-1)}\big[R^a_{b,j3}(x',0)\big]_{a,b=1\ldots 3}.
\end{equation}
In conclusion, using (\ref{rr}) again, the formula in (\ref{main_curv}) becomes:
\begin{equation}\label{main_curv2}
\begin{split}
2\big((\nabla y_0)^{\trsp}\nabla\vec b_{n+1}&\big)_{\sym} + \sum_{k=1}^{n}\binom{n+1}{k}(\nabla \vec
b_k)^{\trsp}\nabla \vec b_{n+1-k} - \partial_{3}^{(n+1)}G(x',0)_{2\times 2}
\\ & = \Big[\langle Ge_i, \big[\partial_3^{(n-1)}R^a_{3,j3}(x',0)\big]_{a=1\ldots 3}\rangle + 
 \langle Ge_j, \big[\partial_3^{(n-1)}R^a_{3,i3}(x',0)\big]_{a=1\ldots 3}\rangle\Big]_{i,j=1\ldots 2} \\ & =
 2\Big[\partial_3^{(n-1)}R_{i3,j3}(x',0)\Big]_{i,j=1\ldots 2} \qquad \mbox{  for all }\; x'\in\omega,
\end{split}
\end{equation}
proving (i) in view of the second assumption at $n+1$.

\medskip

{\bf 2.} For (ii), observe that $B_{n+1}$ is indeed
uniquely defined, by choosing $\vec b_{n+2} = B_{n+1}e_3$ such that:
$$\sum_{k=0}^{n+1}\binom{n+1}{k}B_k^{\trsp}B_{n+1-k} = \partial_3^{(n+1)} G(x',0) 
 \quad \mbox{ for all }\; x'\in\omega,$$
since the principal $2\times 2$ minors of both sides in the above
formula coincide by assumption. Further, by (\ref{swap}) and
the already established (i) at $n+1$, we get:
\begin{equation*}
\begin{split}
B_{n+1}e_i & = \partial_i\vec b_{n+1}=\partial_i\Big(
B_0\nabla_3^{(n-1)}\Gamma_3e_3\Big) = B_0
\nabla_i\nabla_3^{(n-1)}\Gamma_3e_3 \\ & =
B_0 \nabla_3^{(n)}\Gamma_ie_3+ \nabla_3^{(n-1)}\big[R^a_{3,i3}(x',0)\big]_{a=1\ldots 3}
= B_0 \nabla_3^{(n)}\Gamma_ie_3 \\ & = B_0 \nabla_3^{(n)}\Gamma_3e_i \qquad
\mbox{ for all }\; i=1\ldots 2 \quad \mbox{and all }\; x'\in\omega.
\end{split}
\end{equation*}
Hence, there must be $\vec b_{n+1} = B_0\nabla_3^{(n)}\Gamma_3$, as
claimed. This ends the proof of the Lemma.
\end{proof}

\medskip

We note that the argument in the proof above leading to
(\ref{main_curv2}), automatically gives:

\begin{cor}\label{t1iii}
For any $n\geq 1$, condition (iii) in Theorem \ref{teo1} implies the formula (\ref{riem}).
\end{cor}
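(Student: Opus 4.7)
The plan is to observe that (\ref{riem}) is precisely the content of the chain of equalities (\ref{main_curv})--(\ref{main_curv2}) already carried out inside the proof of Lemma \ref{industep}, once it is re-read as an algebraic identity between its two ends rather than as a statement that both ends vanish. In Lemma \ref{industep} these equalities are combined with the hypothesis ``$2\times 2$-minor matching at top order $n$'' to force both sides to be zero; for the present corollary I simply retain the identity and drop that final application of the matching.

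Concretely, under condition (iii) of Theorem \ref{teo1} at a given $n\geq 2$, the hypotheses of Lemma \ref{industep} at the same $n$ are immediately in force, since condition (iii) supplies the full matching at orders $0,\ldots,n-1$ together with the $2\times 2$-minor matching at order $n$. Invoking Lemma \ref{industep} inductively up through level $n$ yields two ingredients: first, the vanishing of the Riemann components listed in Theorem \ref{teo1}(i) at that level, which via Lemma \ref{Riem_zero} upgrades to the vanishing of all mixed partial derivatives up to order $n-2$ of every curvature component on $\omega\times\{0\}$; second, the structural formulas $B_k=B_0\nabla_3^{(k-1)}\Gamma_3$ and $\vec b_{k+1}=B_0\nabla_3^{(k-1)}\Gamma_3\, e_3$ for $k=1,\ldots,n$, along with $\partial_j\vec b_{n+1}=B_0\nabla_j\nabla_3^{(n-1)}\Gamma_3\, e_3$. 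With these in hand, the display (\ref{main_curv}) goes through verbatim: Lemma \ref{lemmaG} expands $\partial_3^{(n+1)}G(\cdot,0)$, the structural formulas substitute for $B_k$ and for $\partial_j\vec b_{n+1}$, and the first ingredient permits the commutation of covariant derivatives used in (\ref{swap}). The remainder left on the right-hand side is exactly $2\big[\partial_3^{(n-1)}R_{i3,j3}(\cdot,0)\big]_{i,j=1\ldots 2}$, and rearranging produces (\ref{riem}).

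For the endpoint $n=1$, the identical strategy applies with Lemma \ref{n2} in place of Lemma \ref{industep}: its internal chain of equalities, running from $\langle B_0e_i,\partial_j\vec b_2\rangle+2\langle\partial_i\vec b_1,\partial_j\vec b_1\rangle+\langle B_0e_j,\partial_i\vec b_2\rangle-\langle\partial_{33}G\, e_i,e_j\rangle$ to $2R_{i3,j3}(\cdot,0)$, is again an algebraic identity whose two ends are precisely the two sides of (\ref{riem}) at $n=1$; the only use made of any matching there beyond condition (iii) of Theorem \ref{teo1} at $n=1$ is the initial rewriting. The only real obstacle is conceptual rather than technical: one must recognise that the manipulations in Lemmas \ref{n2} and \ref{industep} deliver an equality rather than a vanishing, so that the corollary is extracted without any new calculation.
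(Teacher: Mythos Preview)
Your proposal is correct and follows precisely the paper's own approach: the paper's proof of the corollary is the single sentence that the argument leading to (\ref{main_curv2}) automatically yields (\ref{riem}), and you have simply unpacked why that is so, including the observation that the $\vec b_{n+1}$ given by condition (iii) coincides (by uniqueness) with the one produced in Lemma~\ref{industep}(ii), and that the endpoint $n=1$ is handled by the analogous chain in Lemma~\ref{n2}.
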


\section{The end of proof of Theorem \ref{teo2} and a proof of Theorem
\ref{teo1}}\label{sec_I2(n+1)}

The following statement concludes the proof of Theorem \ref{teo2}, assuming (iii) of Theorem \ref{teo1}:

\begin{lemma}\label{fin_strange} 
In the context of Lemma \ref{compact}, there holds: $\displaystyle \liminf_{h\to
  0}\frac{1}{h^{2(n+1)}}\mathcal{E}^h(u^n) \geq \mathcal{I}_{2(n+1)}(V)$.
\end{lemma}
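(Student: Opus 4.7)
The goal is to upgrade the preliminary lower bound from Lemma \ref{lowbd} to the sharp one $\mathcal{I}_{2(n+1)}(V)$. Two ingredients are already in place: formula (\ref{riem}) from Corollary \ref{t1iii} identifies the curvature-type combination $\frac{1}{2(n+1)!}\big(2((\nabla y_0)^{\trsp}\nabla \vec b_{n+1})_{\sym} + \sum_{k=1}^n\binom{n+1}{k}(\nabla \vec b_k)^{\trsp}\nabla \vec b_{n+1-k} - \partial_3^{(n+1)}G(\cdot,0)_{2\times 2}\big)$ that appears inside the $\mathcal{Q}_2$ in Lemma \ref{lowbd} with $\frac{1}{(n+1)!}\mathcal{R}$, where $\mathcal{R}\doteq\big[\partial_3^{(n-1)}R_{i3,j3}(\cdot,0)\big]_{i,j=1\ldots 2}$; and Lemma \ref{compact} (ii) guarantees $\mathbb{S}\in\mathcal{S}_{y_0}$. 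Writing $\mathcal{B}=(\nabla y_0)^{\trsp}\nabla \vec p + (\nabla V)^{\trsp}\nabla \vec b_1$ and noting that $\big((\nabla y_0)^{\trsp}\nabla \vec b_{n+1}\big)_{\sym}\in\mathcal{S}_{y_0}$ by the very definition of that subspace, the shifted strain $\mathbb{S}'\doteq \mathbb{S} - \delta_{n+1}\big((\nabla y_0)^{\trsp}\nabla \vec b_{n+1}\big)_{\sym}$ also lies in $\mathcal{S}_{y_0}$. Lemma \ref{lowbd} therefore yields
\[
\liminf_{h\to 0}\frac{1}{h^{2(n+1)}}\mathcal{E}^h(u^h) \;\geq\; \inf_{\mathbb{S}'\in\mathcal{S}_{y_0}}\frac{1}{2}\int_{\Omega^1}\mathcal{Q}_2\Big(x',\, \mathbb{S}' + x_3\mathcal{B} + \frac{x_3^{n+1}}{(n+1)!}\mathcal{R}\Big)\dd x.
\]

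The plan from here is to expand $\mathcal{Q}_2$ as a quadratic form $\langle\cdot,\cdot\rangle_{\mathcal{Q}_2(x')}$ and integrate in $x_3$ over $(-\frac{1}{2},\frac{1}{2})$. Among the three cross-moments of $\{1,x_3,x_3^{n+1}\}$, the only surviving ones are $\int 1\cdot x_3^{n+1}\dd x_3$ (nonzero iff $n$ is odd) and $\int x_3\cdot x_3^{n+1}\dd x_3$ (nonzero iff $n$ is even); this parity dichotomy is precisely what drives the piecewise formulas in (\ref{albegam}). When $n$ is even, the only surviving cross-term is $\langle \mathcal{B},\mathcal{R}\rangle_{\mathcal{Q}_2}$; completing the square in $(\mathcal{B},\mathcal{R})$ produces $\frac{1}{24}\|\mathcal{B}+\alpha_n\mathcal{R}\|_{\mathcal{Q}_2}^2$ together with a residual proportional to $\|\mathcal{R}\|_{\mathcal{Q}_2}^2 = \|\mathbb{P}_{\mathcal{S}_{y_0}}\mathcal{R}\|_{\mathcal{Q}_2}^2 + \|\mathbb{P}_{\mathcal{S}_{y_0}^\perp}\mathcal{R}\|_{\mathcal{Q}_2}^2$; since $\beta_n=\gamma_n$ in this parity, the last two terms of (\ref{I2(n+1)}) are reproduced verbatim, and the minimization over $\mathbb{S}'$ is trivially attained at $\mathbb{S}'=0$. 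The key arithmetic reduces to $(n+3)^2-3(2n+3)=n^2$.

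For $n$ odd, $\alpha_n=0$ and $\mathcal{B}$ decouples from $\mathcal{R}$ in the $x_3$-integral, immediately producing the bending term $\frac{1}{24}\|\mathcal{B}\|_{\mathcal{Q}_2}^2$; what remains is the genuinely constrained minimization of $\|\mathbb{S}'\|_{\mathcal{Q}_2}^2 + \frac{1}{(n+2)!\,2^n}\langle \mathbb{S}',\mathcal{R}\rangle_{\mathcal{Q}_2}$ over $\mathbb{S}'\in\mathcal{S}_{y_0}$, whose unique minimizer is $\mathbb{S}' = -\frac{1}{2(n+2)!\,2^n}\mathbb{P}_{\mathcal{S}_{y_0}}\mathcal{R}$; the resulting minimum value combined with the diagonal $\mathcal{R}\cdot\mathcal{R}$ contribution and split via $\mathbb{P}_{\mathcal{S}_{y_0}}\oplus\mathbb{P}_{\mathcal{S}_{y_0}^\perp}$ gives exactly $\beta_n\|\mathbb{P}_{\mathcal{S}_{y_0}^\perp}\mathcal{R}\|_{\mathcal{Q}_2}^2 + \gamma_n\|\mathbb{P}_{\mathcal{S}_{y_0}}\mathcal{R}\|_{\mathcal{Q}_2}^2$, the reconciliation coming down to the identity $(n+2)^2-(2n+3)=(n+1)^2$. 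The main obstacle is the coefficient bookkeeping and the recognition that the Hilbert-space projection onto $\mathcal{S}_{y_0}$ is forced on us because the constraint $\mathbb{S}'\in\mathcal{S}_{y_0}$ prevents the unconstrained quadratic minimization; once this structural point is in hand, the rest is polynomial orthogonality and completion of squares.
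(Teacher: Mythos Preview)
Your proposal is correct and follows essentially the same route as the paper: invoke Lemma~\ref{lowbd} together with (\ref{riem}) to replace the curvature combination by $\frac{1}{(n+1)!}\big[\partial_3^{(n-1)}R_{i3,j3}\big]$, use $\mathbb{S}'\in\mathcal{S}_{y_0}$, expand the quadratic form in the $x_3$-moments of $\{1,x_3,x_3^{n+1}\}$, and identify the coefficients $\alpha_n,\beta_n,\gamma_n$. The only cosmetic difference is that the paper performs the $x_3$ completion-of-squares uniformly in $n$ (with $\delta_{n+1}$ encoding the parity) and only afterwards applies the distance-to-$\mathcal{S}_{y_0}$ bound, whereas you pass to the infimum over $\mathbb{S}'\in\mathcal{S}_{y_0}$ first and then split explicitly into the even/odd cases; the computations and the verification $\frac{\delta_{n+1}^2}{2}+\gamma_n=\beta_n$ are the same either way.
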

\begin{proof}
By Lemma \ref{compact} and Corollary \ref{t1iii}, we get:
\begin{equation*}
\begin{split}
\liminf_{h\to 0}  \frac{1}{h^{2(n+1)}}\mathcal{E}^h(u^h) \geq  \frac{1}{2} \int_{\Omega^1}\mathcal{Q}_2\bigg(x',~
& \mathbb{S} - \delta_{n+1} \big((\nabla y_0)^{\trsp}\nabla \vec b_{n+1}\big)_{\sym} 
+ x_3 \big((\nabla y_0)^{\trsp}\nabla \vec p + (\nabla V)^{\trsp}\nabla \vec b_1\big)
\\ & + \frac{x_3^{n+1}}{(n+1)!} \big[\partial_3^{(n-1)}R_{i3,j3}(x',0)\big]_{i,j=1\ldots 2}\bigg)\dd x.
\end{split}
\end{equation*}
Denoting the $x'$-dependent tensor terms at different powers of $x_3$
in the integrand above by $I,II$ and $III$, and recalling the
definition of $\delta_{n+1}$ in (\ref{deldef}), the right hand side becomes:
\begin{equation*}
\begin{split}
\frac{1}{2} & \int_{\Omega^1}\mathcal{Q}_2\big(x',~ I+x_3 II +
x_3^{n+1}III \big)\dd x \\ & = 
\frac{1}{2} \int_{\omega}\mathcal{Q}_2\Big(x',~ I+
\big(\int_{-1/2}^{1/2} x_3^{n+1}\dd x_3\big) III \Big) + 
\frac{1}{24} \mathcal{Q}_2\Big(x',~ II+
12\big(\int_{-1/2}^{1/2} x_3^{n+2}\dd x_3\big) III \Big)\\ & 
\qquad \quad + \frac{1}{2} \bigg(\big(\int_{-1/2}^{1/2} x_3^{2n+2}\dd x_3\big) -
\big(\int_{-1/2}^{1/2} x_3^{n+1}\dd x_3\big)^2 - 12 \big(\int_{-1/2}^{1/2} x_3^{n+2}\dd x_3\big)^2\bigg)
 \mathcal{Q}_2\big(x',~ III \big)\dd x'
\\ & = \frac{1}{2} \int_{\omega}\mathcal{Q}_2\bigg(x',~
\mathbb{S} - \delta_{n+1} \big((\nabla y_0)^{\trsp}\nabla \vec b_{n+1}\big)_{\sym} 
+ \delta_{n+1} \big[\partial_3^{(n-1)}R_{i3,j3}(x',0)\big]_{i,j=1\ldots 2}\bigg)\dd x'
\\ & \quad + \frac{1}{24} \int_{\omega}\mathcal{Q}_2\bigg(x',~
(\nabla y_0)^{\trsp}\nabla \vec p + (\nabla V)^{\trsp}\nabla \vec b_1+
\alpha_n \big[\partial_3^{(n-1)}R_{i3,j3}(x',0)\big]_{i,j=1\ldots
  2}\bigg)\dd x' \\ & \quad + \gamma_n \int_{\omega}\mathcal{Q}_2\bigg(x',~
\big[\partial_3^{(n-1)}R_{i3,j3}(x',0)\big]_{i,j=1\ldots 2}\bigg)\dd x',
\end{split}
\end{equation*}
where by a direct calculation one easily checks that the numerical coefficients $\alpha_n$
and $\gamma_n$ have the form (\ref{albegam}). Further, since
$\mathbb{S} - \delta_{n+1} \big((\nabla y_0)^{\trsp}\nabla \vec
b_{n+1}\big)_{\sym} \in\mathcal{S}_{y_0}$, the first term in the right
hand side above is bounded from below by:
\begin{equation*}
\begin{split}
\frac{1}{2} \dist^2_{\mathcal{Q}_2}\Big(\delta_{n+1}
\big[\partial_3^{(n-1)}R_{i3,j3}(x',0)\big]_{i,j=1\ldots 2},
~\mathcal{S}_{y_0}\Big) & = \frac{\delta^2_{n+1}}{2} \dist^2_{\mathcal{Q}_2}\Big(
\big[\partial_3^{(n-1)}R_{i3,j3}(x',0)\big]_{i,j=1\ldots 2},
~\mathcal{S}_{y_0}\Big) \\ & = \frac{\delta^2_{n+1}}{2} \Big\|\mathbb{P}_{\mathcal{S}_{y_0}^{\perp}}\big(
\big[\partial_3^{(n-1)}R_{i3,j3}(x',0)\big]_{i,j=1\ldots 2}\big)\Big\|_{\mathcal{Q}_2}^2.
\end{split}
\end{equation*}
Decomposing the third term into:
$$\gamma_n \Big\|\mathbb{P}_{\mathcal{S}_{y_0}^{\perp}}\big(
\big[\partial_3^{(n-1)}R_{i3,j3}(x',0)\big]_{i,j=1\ldots 2}\big)\Big\|_{\mathcal{Q}_2}^2
+ \gamma_n\Big\|\mathbb{P}_{\mathcal{S}_{y_0}}\big(
\big[\partial_3^{(n-1)}R_{i3,j3}(x',0)\big]_{i,j=1\ldots 2}\big)\Big\|_{\mathcal{Q}_2}^2,$$
the claim follows by checking that: $\displaystyle
\frac{\delta^2_{n+1}}{2} + \gamma_n = \beta_n$ in (\ref{albegam}).
\end{proof}

\medskip

We are now ready to give:

\medskip

\noindent {\bf A proof of Theorem \ref{teo1}.}
The proof is carried out by induction on $n\geq 2$. When $n=2$, then
(i) is equivalent with (iii) by facts recalled in the preliminary
discussion in section \ref{sec_begin}. Condition (iii) implies (ii) by
Lemma \ref{3to2}, whereas (ii) implies (i) again in view of (\ref{h40}).

\smallskip

Assume now the equivalence of the three conditions at some $n\geq
2$. We want to show the equivalence at $n+1$. Condition (i) implies
(iii) by Corollary \ref{t1iii}. Condition (iii) implies (ii) by Lemma
\ref{3to2}. Finally, assuming (ii) at $n+1$ allows to write:
\begin{equation*}
\begin{split} 
0 & = \lim_{h\to 0} \frac{1}{h^{2(n+1)}}\inf \mathcal{E}^h =
\lim_{h\to 0} \frac{1}{h^{2(n+1)}}\mathcal{E}^h(u^h)\geq
\mathcal{I}_{2(n+1)}(V) \\ & \geq \gamma_n \cdot
\Big\|\big[\partial_3^{(n-1)}R_{i3,j3}(x',0)\big]_{i,j=1\ldots
  2}\Big\|_{\mathcal{Q}_2}^2,
\end{split}
\end{equation*}
for some infimizing sequence $\{u^h\in W^{1,2}(\Omega^h,\R^3)\}_{h\to 0}$ and a resulting $V$
from Theorem \ref{teo2}. This establishes (i) at $n+1$, in view of the inductive assumption.
\endproof

\medskip

For completeness, we state the following auxiliary observations:

\begin{lemma}\label{lem_extra}
In the context of Theorem \ref{teo2}, we have:
\begin{itemize}
\item[(i)] The bending term $(\nabla y_0)^{\trsp}\nabla \vec p + (\nabla V)^{\trsp}\nabla \vec b_1$
is symmetric and it equals: $$\displaystyle \Big[\big\langle \Gamma_je_i,
\left[\begin{array}{c}(\nabla V)^{\trsp}\vec b_1 \\  0\end{array}\right]\big\rangle -
\langle \partial_{ij} V, \vec b_1\rangle \Big]_{i,j=1\ldots 2}. $$
\item[(ii)] Under any of the equivalent conditions in Theorem
  \ref{teo1} at $n+1$, we have: 
$$\mathrm{Ker}\;\mathcal{I}_{2(n+1)} = 
\big\{Sy_0+c; ~ S\in so(3), ~ c\in\mathbb{R}^3\big\},$$ 
and the following coercivity estimate holds:
\begin{equation*}
\dist^2_{W^{2,2}(\omega, \mathbb{R}^3)} \big(V,~ \mathrm{Ker}\; \mathcal{I}_{2(n+1)}\big) \leq
C \mathcal{I}_{2(n+1)}(V)\qquad\mbox{ for all }\; V\in\mathcal{V}_{y_{y_0}}
\end{equation*}
with a constant $C>0$ that depends on $G,\omega$ and $W$ but is
independent of $V$.
\end{itemize}
\end{lemma}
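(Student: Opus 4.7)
For \textbf{part (i)}, the plan is to unpack the defining relation $(B_0^{\trsp}[\nabla V, \vec p])_{\sym}=0$ into its scalar content. Beyond the $2{\times}2$ minor expressing $V\in\mathcal{V}_{y_0}$, this yields the identities $\partial_i y_0\cdot \vec p + \vec b_1 \cdot \partial_i V = 0$ for $i=1,2$ and $\vec b_1\cdot \vec p = 0$, combined as $B_0^{\trsp}\vec p = -\left[\begin{array}{c}(\nabla V)^{\trsp}\vec b_1 \\ 0\end{array}\right]$. Differentiating the first identity in $x_j$ and rearranging gives
\[
\partial_i y_0\cdot \partial_j \vec p + \partial_j \vec b_1 \cdot \partial_i V \;=\; -\partial_{ij} y_0 \cdot \vec p - \vec b_1 \cdot \partial_{ij} V,
\]
whose right-hand side is visibly symmetric in $(i,j)$, establishing the symmetry claim. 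Substituting $\partial_{ij} y_0 = B_0\Gamma_j e_i$ from Lemma \ref{n1}(i) together with the formula for $B_0^{\trsp}\vec p$ produces the expression announced in (i).

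For the kernel identification in \textbf{part (ii)}, my first step is to observe that, under the equivalent conditions of Theorem \ref{teo1} at order $n+1$, Lemma \ref{Riem_zero} annihilates $\partial_3^{(n-1)}R_{i3,j3}(\cdot,0)$, so $\mathcal{I}_{2(n+1)}(V) = \frac{1}{24}\|B(V)\|_{\mathcal{Q}_2}^2$ for $B(V) = (\nabla y_0)^{\trsp}\nabla \vec p + (\nabla V)^{\trsp}\nabla \vec b_1$. For the inclusion $\{Sy_0+c\}\subset \mathrm{Ker}\,\mathcal{I}_{2(n+1)}$, I note that $\vec p = S\vec b_1$ solves the defining constraints when $V = Sy_0+c$, whence $B(V) = (\nabla y_0)^{\trsp}(S+S^{\trsp})\nabla \vec b_1 = 0$. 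For the reverse inclusion, skew-symmetry of $B_0^{\trsp}[\nabla V, \vec p]$ supplies a pointwise factorisation $[\nabla V, \vec p] = \tilde S B_0$ with $\tilde S:\omega\to so(3)$ measurable, so $\partial_i V = \tilde S \partial_i y_0$ and $\vec p = \tilde S \vec b_1$. Substituting into the formula of part (i), the $\tilde S\partial_{ij}y_0$-contributions cancel by skew-symmetry and one is left with $B(V)_{ij} = -\vec b_1\cdot(\partial_i \tilde S)\partial_j y_0$. Writing $\tilde S v = \omega\times v$, the hypothesis $B(V)=0$ reads $[\vec b_1,\partial_i\omega,\partial_j y_0]=0$ for all $i,j=1,2$; since $(\partial_1 y_0,\partial_2 y_0,\vec b_1)$ is a basis this forces $\partial_i \omega\in\mathrm{span}(\vec b_1)$, and the compatibility $(\partial_i\tilde S)\partial_j y_0 = (\partial_j\tilde S)\partial_i y_0$ following from $\partial_{ij}V=\partial_{ji}V$ pins $\partial_i\omega=0$, yielding $V=\tilde S y_0+c$ with $\tilde S\in so(3)$ constant.

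The \textbf{coercivity} estimate I plan to establish by a standard compactness-contradiction argument. If it fails, a normalised sequence $V_n\in\mathcal{V}_{y_0}$ with $\|V_n\|_{W^{2,2}}=1$, $W^{2,2}$-orthogonal to the kernel, and $B(V_n)\to 0$ in $L^2$ can be extracted. The key point is that differentiating the isometry constraint $((\nabla y_0)^{\trsp}\nabla V)_{\sym}=0$ controls the three components $\partial_a y_0\cdot \partial_{bc}V$ (for $a,b,c=1,2$) in terms of $\nabla V$, while the formula of part (i) controls $\vec b_1\cdot \partial_{bc}V$ in terms of $B(V)$ and $\nabla V$; together these linearly reconstruct $\nabla^2 V$ from $B(V)$ and $\nabla V$. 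Rellich–Kondrachov then gives strong $W^{1,2}$-convergence of a subsequence to some $V$, upgraded to strong $W^{2,2}$-convergence by the reconstruction, with $B(V)=0$ in the limit; then $V$ lies in the kernel, contradicting the orthogonality and unit-norm conditions.

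\textbf{The main obstacle} will be the converse inclusion in the kernel characterisation: distilling the pointwise information $\vec b_1\cdot (\partial_i\tilde S)\partial_j y_0 = 0$ into the rigidity statement ``$\tilde S$ is constant'' requires combining the algebraic structure of $so(3)$ via the cross-product parametrisation with the mixed-partial compatibility $\partial_{ij}V=\partial_{ji}V$. The $\nabla^2 V$-reconstruction underlying coercivity is the other delicate point, though it reduces to a pointwise linear-algebra computation once the formula of (i) is available.
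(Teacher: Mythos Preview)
Your treatment of part (i) is essentially identical to the paper's: both differentiate the constraint $\langle\partial_i y_0,\vec p\rangle+\langle\partial_i V,\vec b_1\rangle=0$ in $x_j$, observe that the resulting right-hand side $-\langle\partial_{ij}y_0,\vec p\rangle-\langle\partial_{ij}V,\vec b_1\rangle$ is symmetric in $(i,j)$, and then substitute $\partial_{ij}y_0=B_0\Gamma_j e_i$ and $B_0^{\trsp}\vec p=-[(\nabla V)^{\trsp}\vec b_1;\,0]$ to obtain the displayed formula. You spell out the last substitution more explicitly than the paper does.

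For part (ii) the approaches diverge. The paper does not prove (ii) at all; it simply refers to \cite[Theorems 8.2, 8.3]{LL} for both the kernel identification and the coercivity estimate. You instead supply a self-contained argument: reducing $\mathcal{I}_{2(n+1)}$ to $\frac{1}{24}\|B(V)\|_{\mathcal{Q}_2}^2$ via the vanishing of $\partial_3^{(n-1)}R_{i3,j3}(\cdot,0)$ under the hypotheses at level $n+1$, factorising $[\nabla V,\vec p]=\tilde S B_0$ with $\tilde S\in so(3)$ pointwise, using the formula from (i) together with the cross-product parametrisation and the compatibility $\partial_{12}V=\partial_{21}V$ to force $\tilde S$ constant, and finally running a compactness--contradiction argument for coercivity based on the pointwise linear reconstruction of $\nabla^2 V$ from $(B(V),\nabla V)$. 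This line of reasoning is correct and has the merit of being independent of the cited reference; the paper's route is of course shorter within the present text but defers the actual work elsewhere.
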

\begin{proof}
The symmetry of the bending term in (i) follows from:
\begin{equation*}
\begin{split}
\langle\partial_iy_0, \partial_j\vec p\rangle +
\langle\partial_iV, \partial_j\vec b_1\rangle & = \partial_j\big( 
\langle\partial_iy_0, \vec p\rangle + \langle\partial_iV, \vec b_1\rangle\big)
- \big(\langle\partial_{ij}y_0, \vec p\rangle +
\langle\partial_{ij}V, \vec b_1\rangle\big) \\ & = - \langle\partial_{ij}y_0, \vec p\rangle -
\langle\partial_{ij}V, \vec b_1\rangle \qquad \mbox{ for all } i,j=1\ldots 2.
\end{split}
\end{equation*}
The coercivity statement in (ii) has been proved in \cite[Theorems 8.2, 8.3]{LL}.
\end{proof}

\section{A proof of Theorem \ref{teo3}}\label{sec_recseq}

In this section, we prove the upper bound result of Theorem \ref{teo3}.
In view of the already established Theorem \ref{teo1}, it suffices to show:

\begin{lemma}\label{construct_rec}
Fix $n\geq 2$ and assume  condition (iii) in Theorem \ref{teo1}. Let
$V\in \mathcal{V}_{y_0}$ be a first order isometry displacement as in
(\ref{Vy0}). Then, there exists a sequence $\{u^h\in
W^{1,2}(\Omega^h, \R^3)\}_{h\to 0}$ of deformations satisfying (\ref{goodappro}),
and such that: $\displaystyle \liminf_{h\to 0 }
\frac{1}{h^{2(n+1)}}\mathcal{E}^h(u^h) = \mathcal{I}_{2(n+1)}(V)$.
\end{lemma}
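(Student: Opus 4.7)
The plan is to adapt the standard dimension-reduction recovery ansatz of \cite{FJMhier} to the geometric setting. Start from the base deformation $\displaystyle \tilde u^h(x', x_3) = y_0 + \sum_{k=1}^{n+1}\frac{x_3^k}{k!}\vec b_k$, whose energy is already $O(h^{2(n+1)})$ by Lemma \ref{3to2}, and add three perturbations: a displacement of order $h^n$ encoding $V$ and its associated $\vec p$; an in-plane correction of order $h^{n+1}$ that cancels the $\mathcal{S}_{y_0}$-projection of the limiting curvature in the finite-strain slot; and a through-thickness warping of order $h^{n+1}$ realizing the pointwise $\mathcal{Q}_3 \to \mathcal{Q}_2$ reduction in (\ref{cdef}).

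First I would reduce to smooth $V$ (hence smooth $\vec p$, which is linearly determined from $V$ via the relation in Theorem \ref{teo2}(ii)) by a standard mollification-plus-projection density argument for $\mathcal{V}_{y_0}$. Next, using the definition of $\mathcal{S}_{y_0}$ as an $\mathbb{E}$-closure, choose a smooth family $\{w_\epsilon\}_{\epsilon \to 0} \subset C^\infty(\bar\omega, \R^3)$ with
\[
((\nabla y_0)^{\trsp}\nabla w_\epsilon)_{\sym} \to -\delta_{n+1}\mathbb{P}_{\mathcal{S}_{y_0}}\big(\big[\partial_3^{(n-1)}R_{i3,j3}\big]_{i,j=1\ldots 2}\big) \quad \text{in } \mathbb{E} \text{ as } \epsilon \to 0,
\]
so that, in the limit, the $\mathcal{S}_{y_0}$-portion of the curvature tensor is absorbed into the finite strain, leaving only the $\mathcal{S}_{y_0}^\perp$-portion as the irreducible excess generating the second term of $\mathcal{I}_{2(n+1)}$.

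The ansatz is then
\[
u^h(x', x_3) = \tilde u^h(x', x_3) + h^n V(x') + h^n x_3\, \vec p(x') + h^{n+1} w_\epsilon(x') + h^{n+1}\int_0^{x_3}\vec c^h(x', t)\dd t,
\]
where $\vec c^h(x', x_3)$ is defined so that the third column of the leading $h^{n+1}$-coefficient of $G^{-1/2}((\nabla u^h)^{\trsp}\nabla u^h - G)G^{-1/2}$ realizes the minimizer of (\ref{cdef}) pointwise in $x_3$. Using (\ref{expB}) at the base level and Corollary \ref{t1iii} to identify the $2\times 2$ curvature contribution, the expansion of $(\nabla u^h)^{\trsp}\nabla u^h - G$ at order $h^{n+1}$ has $2\times 2$ minor given by $I + x_3\, II + x_3^{n+1}\, III$ in the notation of the proof of Lemma \ref{fin_strange}, with the finite strain $\mathbb{S}$ now realized by $((\nabla y_0)^{\trsp}\nabla w_\epsilon)_{\sym}$. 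Truncating to stay in the $\mathcal{C}^2$-neighborhood of $SO(3)$ (assumption (iv) on $W$) and Taylor-expanding yield, a.e.\ after rescaling $x_3 \mapsto hx_3$,
\[
\frac{1}{h^{2(n+1)}}W\big((\nabla u^h) G^{-1/2}\big) \to \frac{1}{2}\mathcal{Q}_2\big(x',\, I + x_3 II + x_3^{n+1} III\big);
\]
dominated convergence and a diagonal extraction against $\epsilon \to 0$ recover $\mathcal{I}_{2(n+1)}(V)$ by the computation in the proof of Lemma \ref{fin_strange}. The convergence (\ref{goodappro}) is immediate since all added perturbations are $o(h^n)$ in $W^{1,2}(\omega, \R^3)$-norm after averaging in $x_3$.

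The main obstacle is the simultaneous construction of the warping $\vec c^h$ and the strain-corrector $w_\epsilon$: the warping must realize the pointwise optimizer of (\ref{cdef}) at every $x_3$-level without disturbing the lower-order bending and stretching terms, and the density approximation for $w_\epsilon$ must be carried out in such a way that the diagonal extraction against $h$ is admissible. Both are notationally delicate but follow routine patterns from the analogous constructions in \cite{LRR, LL} for $n=1$.
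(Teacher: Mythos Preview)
Your overall architecture---base deformation plus $h^n$-displacement encoding $(V,\vec p)$, plus $h^{n+1}$-corrector $w_\epsilon$ realizing the $\mathcal{S}_{y_0}$-projection, plus a through-thickness warping realizing the $\mathcal{Q}_3\to\mathcal{Q}_2$ reduction via $c(x',\cdot)$---is exactly the paper's scheme. The paper makes the warping explicit by splitting it into three polynomial-in-$x_3$ pieces ($\vec q^h$, $\vec r^h$, $\vec k_0$ at orders $x_3$, $x_3^2$, $x_3^{n+2}$), but this is only a presentational difference from your integrated $\vec c^h$.

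The genuine gap is your first reduction step. The claim that smooth elements are dense in $\mathcal{V}_{y_0}$ by ``mollification-plus-projection'' is not standard and in general is not known: mollifying $V$ destroys the first-order isometry constraint $((\nabla y_0)^{\trsp}\nabla V)_{\sym}=0$ because $\nabla y_0$ varies in $x'$, and the $W^{2,2}$-orthogonal projection back onto $\mathcal{V}_{y_0}$ has no reason to preserve smoothness (it solves a variational problem whose regularity depends on the geometry of $y_0(\omega)$). Without this density, you cannot diagonalize and the argument collapses for generic $V\in W^{2,2}$. The paper avoids this entirely by using the Friesecke--James--M\"uller Lipschitz truncation: it takes $v^h\in W^{2,\infty}$ with $h^n\|v^h\|_{W^{2,\infty}}\leq\varepsilon$ and $h^{-2n}|\{v^h\neq V\}|\to 0$, and does \emph{not} require $v^h\in\mathcal{V}_{y_0}$. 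The resulting constraint violation $((\nabla y_0)^{\trsp}\nabla v^h)_{\sym}$ enters the strain at order $h^n$, and the key extra estimate (\ref{bdsym}),
\[
\frac{1}{h^2}\int_\omega\big|((\nabla y_0)^{\trsp}\nabla v^h)_{\sym}\big|^2\dd x' \;\leq\; \frac{C\varepsilon^2}{h^{2n+2}}\big|\{v^h\neq V\}\big|^2 \;\to\; 0,
\]
shows it is negligible at the $h^{2(n+1)}$ energy scale. This is the substantive technical point your outline misses; the ``main obstacle'' you flag (compatibility of $\vec c^h$ and $w_\epsilon$) is by comparison routine.
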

\begin{proof}
{\bf 1.} Denote $\displaystyle Y(x',x_3) = y_0 + \sum_{k=1}^{n+1}\frac{x_3^k}{k!}\vec b_k$ and define:
\begin{equation}\label{uh}
\begin{aligned}
u^h(x', x_3) = & \, \, Y(x', x_3)+ h^n v^h(x')+h^{n+1}w^h(x')+
h^nx_3\vec p^h(x') + h^{n+1}x_3 \vec q^h(x') \\
& + \frac{x_3^{n+2}}{(n+2)!} \vec k_0(x') + h^n\frac{x_3^2}{2}\vec r^h(x')
\qquad \mbox{for all }\, (x',x_3)\in \Omega^h.
\end{aligned}
\end{equation}
We now introduce terms in the above expansion.
For a fixed small $\varepsilon >0$, the truncated sequence $\{v^h\in
W^{2,\infty}(\omega, \R^3)\}_{h\to 0}$ is chosen according to the
standard construction in \cite{FJM02} (see also references therein), in a way that: 
\begin{equation}\label{vh}
\begin{split}
& v^h\to V\quad\mbox{ strongly in }\,W^{2,2}(\omega,\R^3) ~~ \mbox{ as }\,h\to 0,\\
& h^n\|v^h\|_{W^{2,\infty}(\omega, \R^3)}\leq \varepsilon\quad\mbox{ and
}\quad\lim_{h\to 0}\frac{1}{h^{2n}}\,\big|\{x'\in \omega;~ v^h(x')\neq
V(x')\}\big|=0. 
\end{split}
\end{equation}
The sequence $\{\vec p^h\in W^{1,\infty}(\omega, \R^3)\}_{h\to 0}$ is defined by:
\begin{equation}\label{ph} 
B_0^{\trsp}\vec p^h =\left[\begin{matrix}
-(\nabla v^h)^{\trsp}\vec b_1\\ 0\end{matrix}\right]\qquad \mbox{so
that} \qquad \Big(B_0^{\trsp}\big[\nabla v^h, ~ \vec
p^h\big]\Big)_{\sym} = \Big((\nabla y_0)^{\trsp}\nabla v^h\Big)_{\sym}^*.
\end{equation}
The sequence $\{w^h\in \mathcal{C}^{\infty}(\bar \omega, \R^3)\}_{h\to
  0}$ is such that, recalling (\ref{deldef}): 
\begin{equation}\label{wh}
\begin{split}
& \Big((\nabla y_0)^{\trsp}\nabla w^h\Big)_{\sym}\to
- \delta_{n+1} \mathbb{P}_{\mathcal{S}_{y_0}}\Big(
\big[\partial_3^{(n-1)}R_{i3j3}\big]_{i,j=1\ldots 2}\Big)\\
& \quad\qquad\qquad\qquad\qquad \mbox{
  strongly in }\, \mathbb{E}=\LL^2(\omega, \R^{2\times 2}_{\sym}) ~~\mbox{ as } h\to 0, \\
& \lim_{h\to 0} h^{1/2}\|w^h\|_{\WW^{2,\infty}(\omega, \R^3)}=0.
\end{split}
\end{equation}

Finally, $\vec k_0\in \mathcal{C}^{\infty}(\bar\omega, \R^3)$
and $\{\vec q^h \in \mathcal{C}^{\infty}(\bar \omega, \R^3)\}_{h\to 0}$, $\{\tilde r^h\in
\LL^{\infty}(\omega, \R^3)\}_{h\to 0}$ are defined by:
\begin{equation}\label{qh}
\begin{aligned}
2B_0^{\trsp}  \vec k_0=&\, c\Big(x',  2\big((\nabla
y_0)^{\trsp}\nabla \vec b_{n+1}\big)_{\sym} +
\sum_{k=1}^n\binom{n+1}{k}(\nabla \vec b_k)^{\trsp}\nabla\vec
b_{n+1-k} - \partial_3^{(n+1)}G(x',0)_{2\times 2}\Big) \vspace{1mm} \\  & - \left[\begin{matrix}
\displaystyle 2 \sum_{k=0}^n\binom{n+1}{k}(\nabla \vec b_{n+1-k})^{\trsp}\nabla\vec
b_{k+1} \vspace{1mm}  \\ \displaystyle \sum_{k=1}^n\binom{n+1}{k}(\nabla \vec b_{k+1})^{\trsp}\nabla\vec
b_{n+2-k}  \end{matrix}\right] + \left[\begin{matrix} \displaystyle
    2\partial_3^{(n+1)}G(x',0)_{31,32} \vspace{1mm} \\
\displaystyle \partial_3^{(n+1)}G(x',0)_{33}\end{matrix}\right],  \\
B_0^{\trsp}\vec q^h = &\, c\Big(x',\big((\nabla
y_0)^{\trsp}\nabla w^h\big)_{\sym}\Big) - \left[\begin{matrix}
(\nabla w^h)^{\trsp}\vec b_1 \vspace{1mm} \\ 0\end{matrix}\right],\\
B_0^{\trsp}\tilde r^h =&\, c\Big(x', (\nabla
y_0)^{\trsp}\nabla \vec p^h+ (\nabla v^h)^{\trsp}\nabla
\vec b_1\Big)-\left[\begin{matrix} 
(\nabla v^h)^{\trsp}\vec b_2 \vspace{1mm} \\ \langle \vec p^h, \vec b_2\rangle\end{matrix}\right].
\end{aligned}
\end{equation}
Further, we choose $\{\vec r^h\in \mathcal{C}^\infty(\bar\omega,
\R^3)\}_{h\to  0}$ to satisfy, in view of (\ref{vh}):
\begin{equation}\label{rhh}
\lim_{h\to 0}\|\vec r^h-\tilde r^h\|_{\LL^2(\omega,
  \R^3)}=0\quad\mbox{ and }\quad \lim_{h\to
  0} {h}^{1/2}\|\vec r^h\|_{\WW^{1,\infty}(\omega,\R^3)}=0. 
\end{equation}

\medskip

{\bf 2.} By (\ref{wh}) and (\ref{rhh}) we easily deduce
(\ref{goodappro}). Compute now, for all rescaled variables $(x',x_3)\in\Omega^1$:
\[
\begin{aligned}
\nabla u^h(x',hx_3)= & \; h^n \big[\nabla v^h,~ \vec p^h\big] +\sum_{k=0}^n \frac{h^kx_3^k}{k!}B_k +
\frac{h^{n+1}x_3^{n+1}}{(n+1)!}\big[ \partial_1\vec b_{n+1},
~\partial_2\vec b_{n+1},~ \vec k_0\big] \\ & 
+ h^{n+1}x_3 \big[ \nabla \vec
p^h,~ \vec r^h\big] + h^{n+1}\big[\nabla w^h,~ \vec q^h\big] +
\mathcal{O}(h^{n+2}) \big(1+ |\nabla \vec q^h| + |\nabla \vec r^h|\big).
\end{aligned}
\]
Consequently, it follows that for $h$ small enough we have:
\[
\dist\big((\nabla u^h) G^{-1/2}, SO(3)\big)\leq C\big( |\nabla u^h -
B_0| + h\big) \leq C\epsilon,
\]
which justifies writing,  by Taylor's expansion of $W$ and taking $\epsilon\ll 1$:
\begin{equation*}
\begin{split}
W\big((\nabla u^h) G^{-1/2}\big) & = W\Big(\sqrt{Id_3 +
  G^{-1/2}\big((\nabla u^h)^{\trsp}\nabla u^h - G\big)G^{-1/2}}\Big) \\ & =  W\Big(Id_3 +
 \frac{1}{2} G^{-1/2}\big((\nabla u^h)^{\trsp}\nabla u^h - G\big)G^{-1/2} +
 \mathcal{O}\big(|(\nabla u^h)^{\trsp}\nabla u^h - G|^2\big) \Big) \\ & = 
W\Big(Id_3 + \frac{1}{2} G(x',0)^{-1/2}\big((\nabla u^h)^{\trsp}\nabla u^h -
G\big)G(x',0)^{-1/2} \\ & \qquad\qquad\qquad \qquad\qquad +
 \mathcal{O}\big(h|(\nabla u^h)^{\trsp}\nabla u^h - G|\big) +
 \mathcal{O}\big(|(\nabla u^h)^{\trsp}\nabla u^h - G|^2\big) \Big) \\ & 
= \frac{1}{8} \mathcal{Q}_3\Big(G(x',0)^{-1/2}\big((\nabla u^h)^{\trsp}\nabla
u^h - G\big)G(x',0)^{-1/2}\Big) \\ & \qquad\qquad\qquad \qquad\qquad +
 \mathcal{O}\big(h|(\nabla u^h)^{\trsp}\nabla u^h - G|^2\big) +
 \mathcal{O}\big(|(\nabla u^h)^{\trsp}\nabla u^h - G|^3\big).
\end{split}
\end{equation*}
This implies that:
\begin{equation}\label{caca}
\begin{split}
\frac{1}{h^{2n+2}}\mathcal{E}^h&(u^h)  \\ = &\;
\frac{1}{8}\int_{\Omega^1} \mathcal{Q}_3\Big(\frac{1}{h^{n+1}}G(x',0)^{-1/2}\big((\nabla u^h)^{\trsp}\nabla
u^h(x', hx_3) - G(x',hx_3)\big)G(x',0)^{-1/2}\Big) \dd x\\ & +
\int_{\Omega^1} \frac{1}{h^{2n+2}}
\mathcal{O}\big(h|(\nabla u^h)^{\trsp}\nabla u^h - G|^2\big) +\frac{1}{h^{2n+2}}
 \mathcal{O}\big(|(\nabla u^h)^{\trsp}\nabla u^h - G|^3\big) \dd x.
\end{split}
\end{equation}

\medskip

\noindent We thus compute, for all $(x',x_3)\in \Omega^1$:
\[
\begin{aligned}
(\nabla u^h)^{\trsp} &\nabla u^h(x',hx_3) - G(x',hx_3) = 2 h^n
\big((\nabla y_0)^{\trsp}\nabla v^h\big)^*_{\sym} \\ & + \frac{h^{n+1}x_3^{n+1}}{(n+1)!} 
\bigg(\sum_{k=1}^n \binom{n+1}{k}B_k^{\trsp}B_{n+1-k} +
2\big(B_0^{\trsp}\big[\partial_1\vec b_{n+1},~\partial_2 \vec
b_{n+1},~\vec k_0\big]\big)_{\sym} - \partial_3^{(n+1)}G(x',0)\bigg)
\\ & + 2h^{n+1}x_3 \big(B_0^T\big[ \nabla \vec p^h,~ \vec r^h\big]\big)_{\sym} +  
2h^{n+1}x_3 \big(B_1^T\big[ \nabla \vec v^h,~ \vec p^h\big]\big)_{\sym} \\ & 
+ 2h^{n+1}\big(B_0^T\big[\nabla w^h,~ \vec q^h\big]\big)_{\sym} +\mathcal{R}_h,
\end{aligned}
\]
where:
$$\mathcal{R}^h = 
{o}(h^{n+1}) + \mathcal{O}(h^{n+2}) \big( |\nabla \vec v^h| +
|\nabla^2 \vec v^h|\big) + \mathcal{O}(h^{2n}) |\nabla^2 \vec v^h|^2 + 
\mathcal{O}(h^{2n+2}) |\nabla^2 \vec v^h|^2.$$

\medskip

{\bf 3.} We now estimate the two last (error) terms in the right hand
side of (\ref{caca}). Observe that:
\begin{equation*}
\begin{split}
|(\nabla u^h)^{\trsp}\nabla u^h- G| = & \; \mathcal{O}(h^{n+1})\big(1+
|\nabla v^h| + |\nabla w^h| + |\vec p^h|+ |\nabla \vec p^h| + |\vec
q^h| + |\vec r^h|\big) \\ & + \mathcal{R}^h + \mathcal{O}(h^n) |\big((\nabla
y_0)^{\trsp}\nabla v^h\big)_{\sym} |\\ = & \;\mathcal{O}(h^{n+1})\big(1+
|\nabla v^h| + |\nabla^2 v^h| + h^{-1/2}o(1)\big) + \mathcal{O}(h^n) |\big((\nabla
y_0)^{\trsp}\nabla v^h\big)_{\sym} | \\ & + 
\mathcal{O}(h^{2n})|\nabla v^h|^2 + \mathcal{O}(h^{2n+2}) |\nabla^2 v^h|^2,
\end{split}
\end{equation*}
where we have repeatedly used (\ref{ph}), (\ref{wh}), (\ref{qh}) and (\ref{rhh}), Consequently:
\begin{equation*}
\begin{split}
\frac{1}{h^{2n+2}}\mathcal{O}\big(|(\nabla u^h)^{\trsp}\nabla u^h-
G|^3\big) = & \; \mathcal{O}(h^{n+1})\big(1+
|\nabla v^h|^3 + |\nabla^2 v^h|^3 + h^{-3/2}o(1)\big) + 
\mathcal{O}(h^{4n+4})|\nabla^2 v^h|^6 \\ & + \mathcal{O}(h^{4n-2}) |\nabla
v^h|^6 + \mathcal{O}(h^{n-2}) |\big((\nabla y_0)^{\trsp}\nabla v^h\big)_{\sym} |^3. 
\end{split}
\end{equation*}
The first two terms in the right hand side above converge to $0$ in
$L^1(\omega^1)$ by (\ref{vh}) and (\ref{wh}). The $L^1$ norm of the
third term is bounded by $Ch^{4n-2}\|\nabla v^h\|_{W^{1,2}}^4$ and
thus converges to $0$ as well. The final fourth term is bounded,
in virtue of (\ref{vh}) by:
\begin{equation}\label{bdsym}
\begin{split}
\frac{1}{h^2}\int_\omega &\big|\big((\nabla y_0)^{\trsp}\nabla
v^h\big)_{\sym}\big|^2\dd x \leq \frac{C}{h^2} \big(\|\nabla v^h\|^2_{L^\infty}
+ \|\nabla^2 v^h\|^2_{L^\infty}\big) \int_{\{v^h\neq V\}} \dist^2(x',
\{v^h=V\})\dd x' \\ & \leq \frac{C\epsilon^2}{h^{2n+2}}\int_{\{v^h\neq V\}} \dist^2(x',
\{v^h=V\})\dd x' \leq \frac{C\epsilon^2}{h^{2n+2}}\big|\{v^h\neq
V\}\big|^2\\ & \leq \frac{C\epsilon^2}{h^{2n+2}}h^{4n} \cdot o(1) \to
0 \quad \mbox{ as } h\to 0.
\end{split}
\end{equation}
This completes the convergence analysis of the first error term in
(\ref{caca}). For the second term, we get:
\begin{equation*}
\begin{split}
\frac{1}{h^{2n+2}}\mathcal{O}\big(h|(\nabla u^h)^{\trsp}\nabla u^h-
G|^2\big) = & \; \mathcal{O}(h)\big(1+
|\nabla v^h|^2 + |\nabla^2 v^h|^2 + h^{-1}o(1)\big) + 
\mathcal{O}(h^{2n-1})|\nabla v^h|^4 \\ & + \mathcal{O}(h^{2n+3}) |\nabla^2
v^h|^4 + \frac{1}{h}\mathcal{O}\big(|\big((\nabla y_0)^{\trsp}\nabla v^h\big)_{\sym} |^2\big). 
\end{split}
\end{equation*}
As before, the first three terms converge to $0$ in $L^1(\omega)$,
whereas convergence of the last term follows by (\ref{bdsym}).
Concluding, and since $\frac{1}{h^{n+1}}\mathcal{R}^h$
converges to $0$ in $L^2(\Omega^1)$, the limit in (\ref{caca}) becomes:
\begin{equation}\label{caca2}
\begin{split}
\lim_{h\to 0}&\frac{1}{h^{2n+2}}\mathcal{E}^h(u^h)  \\ & = 
\lim_{h\to 0}\frac{1}{8}\int_{\Omega^1} \mathcal{Q}_3\Big(\frac{1}{h^{n+1}}G(x',0)^{-1/2}\big((\nabla u^h)^{\trsp}\nabla
u^h(x', hx_3) - G(x',hx_3)\big)G(x',0)^{-1/2}\Big) \dd x \\ & =
\lim_{h\to 0}\frac{1}{8}\int_{\Omega^1} \mathcal{Q}_3\Big(G(x',0)^{-1/2}K^h(x',x_3) G(x',0)^{-1/2}\Big) \dd x,
\end{split}
\end{equation}
where for a.e. $(x', x_3)\in\Omega^1$ we define:
\[
\begin{aligned}
K^h(x',x_3) = & \; \frac{2}{h} \big((\nabla y_0)^{\trsp}\nabla
v^h\big)^*_{\sym} \\ & + \frac{x_3^{n+1}}{(n+1)!} 
\bigg(\sum_{k=1}^n \binom{n+1}{k}B_k^{\trsp}B_{n+1-k} +
2\big(B_0^{\trsp}\big[\partial_1\vec b_{n+1},~\partial_2 \vec
b_{n+1},~\vec k_0\big]\big)_{\sym} - \partial_3^{(n+1)}G(x',0)\bigg)
\\ & + 2x_3 \big(B_0^T\big[ \nabla \vec p^h,~ \vec r^h\big] +  
\big(B_1^T\big[ \nabla \vec v^h,~ \vec p^h\big]\big)_{\sym} 
+ 2 \big(B_0^T\big[\nabla w^h,~ \vec q^h\big]\big)_{\sym}.
\end{aligned}
\]

\smallskip

\noindent In view of (\ref{bdsym}) and since $\|\vec r^h - \tilde
r^h\|_{L^2}$converges to $0$ as requested in (\ref{rhh}), the
compatibility in the definition (\ref{qh}) now yields from (\ref{caca2}):
\begin{equation}\label{caca3}
\begin{split}
\lim_{h\to 0}&\frac{1}{h^{2n+2}}\mathcal{E}^h(u^h)  \\ & =   
\lim_{h\to 0}\frac{1}{2}\int_{\omega} \mathcal{Q}_2\bigg(x', 
\frac{x_3^{n+1}}{2(n+1)!} 
\bigg(2\big((\nabla y_0)^{\trsp} \nabla \vec b_{n+1}\big)_{\sym } +
\sum_{k=1}^n \binom{n+1}{k}(\nabla\vec b_k)^{\trsp}\nabla \vec b_{n+1-k} 
\\ & \qquad \qquad\qquad\qquad\qquad \qquad\qquad\qquad \qquad\qquad \qquad\qquad\qquad
- \partial_3^{(n+1)}G(x',0)_{2\times 2}\bigg)
\\ & \quad \qquad\qquad\qquad\qquad +2x_3 \Big((\nabla
y_0)^{\trsp}\nabla\vec p^h 
+ (\nabla v^h)^{\trsp}\nabla \vec b_1\Big)
+  \big((\nabla y_0)^{\trsp}\nabla w^h\big)_{\sym} \bigg) \dd x'.
\end{split}
\end{equation}
Now, decomposing the integrand above as in the proof of Lemma
\ref{fin_strange} and recalling convergences in (\ref{vh}) and
(\ref{wh}), we conclude that the right hand side of (\ref{caca3}) equals 
$\mathcal{I}_{2(n+1)}(V)$, as claimed.
\end{proof}

\medskip

It is worth observing that directly from Theorems \ref{teo2} and \ref{teo3} we obtain:

\begin{cor}\label{26new}
Each functional $\mathcal{I}_{2(n+1)}$  attains its infimum and there holds:
$$\lim_{h\to 0} \frac{1}{h^{2(n+1)}}\inf\mathcal{E}^h =\min \;\mathcal{I}_{2(n+1)}.$$
The infima  in the left hand side are taken over 
$W^{1,2}(\Omega, \mathbb{R}^3)$ deformations $u^h$, whereas the minimum in
the right hand side is taken over admissible displacements $V\in
\mathcal{V}_{y_0}$.
\end{cor}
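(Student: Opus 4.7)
The plan is to deduce the corollary as a fairly standard consequence of $\Gamma$-convergence coupled with coercivity. I would organize the argument into three steps: attainment of $\min\mathcal{I}_{2(n+1)}$, the upper bound on $\limsup$, and the matching lower bound on $\liminf$.

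For attainment, I would apply the direct method of the calculus of variations to the functional $\mathcal{I}_{2(n+1)}$ on the space $\mathcal{V}_{y_0}$. Take a minimizing sequence $\{V_k\}\subset\mathcal{V}_{y_0}$. Since $\mathrm{Ker}\,\mathcal{I}_{2(n+1)} = \{Sy_0 + c;\, S\in so(3),\, c\in\R^3\}$ is a finite-dimensional subspace of $\mathcal{V}_{y_0}$, I can replace each $V_k$ by its projection $\tilde V_k$ onto the orthogonal complement without changing the energy. The coercivity estimate in Lemma \ref{lem_extra}(ii) then gives $\|\tilde V_k\|_{W^{2,2}}\leq C(1+\mathcal{I}_{2(n+1)}(V_k)^{1/2})$, hence $\{\tilde V_k\}$ is bounded in $W^{2,2}(\omega,\R^3)$. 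Passing to a subsequence, $\tilde V_k\rightharpoonup V^*$ weakly in $W^{2,2}$. The constraint $((\nabla y_0)^{\trsp}\nabla V)_{\sym}=0$ is weakly closed, so $V^*\in\mathcal{V}_{y_0}$. Since $\mathcal{I}_{2(n+1)}$ is a sum of squared seminorms which are continuous (the bending term) or constant (the curvature terms) in their dependence on $V$, weak lower semicontinuity yields $\mathcal{I}_{2(n+1)}(V^*)\leq \liminf \mathcal{I}_{2(n+1)}(\tilde V_k)=\inf\mathcal{I}_{2(n+1)}$, so $V^*$ is a minimizer.

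For the upper bound, given any $V\in\mathcal{V}_{y_0}$ (in particular the minimizer $V^*$), Theorem \ref{teo3} provides a recovery sequence $\{u^h\}$ with $h^{-2(n+1)}\mathcal{E}^h(u^h)\to\mathcal{I}_{2(n+1)}(V^*) = \min\mathcal{I}_{2(n+1)}$. Hence $\limsup_{h\to 0} h^{-2(n+1)}\inf\mathcal{E}^h \leq \min\mathcal{I}_{2(n+1)}$. For the lower bound, pick an almost-infimizing sequence $\{u^h\}$ with $\mathcal{E}^h(u^h)\leq \inf\mathcal{E}^h+h^{2(n+1)+1}$, so that in particular $\mathcal{E}^h(u^h)\leq Ch^{2(n+1)}$ by Lemma \ref{3to2}. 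Theorem \ref{teo2}(i) gives, up to a subsequence and after subtracting rigid motions, convergence of the averaged displacements $V^h\to V\in\mathcal{V}_{y_0}$. Theorem \ref{teo2}(ii) then yields $\liminf h^{-2(n+1)}\mathcal{E}^h(u^h)\geq \mathcal{I}_{2(n+1)}(V)\geq \min\mathcal{I}_{2(n+1)}$, which combined with the upper bound closes the chain of inequalities.

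The step I expect to be the most delicate is attainment, and specifically verifying that the coercivity in Lemma \ref{lem_extra}(ii) really does control the $W^{2,2}$ distance to $\mathrm{Ker}\,\mathcal{I}_{2(n+1)}$ along a minimizing sequence, since the curvature contributions in $\mathcal{I}_{2(n+1)}$ are constants independent of $V$ and only the bending term $(\nabla y_0)^{\trsp}\nabla\vec p + (\nabla V)^{\trsp}\nabla\vec b_1$ carries $V$; however, this is precisely the situation covered by \cite[Theorems 8.2, 8.3]{LL} cited in Lemma \ref{lem_extra}(ii), so no further work is needed. The remaining pieces—weak closedness of $\mathcal{V}_{y_0}$, weak lower semicontinuity of the quadratic integrand, and assembly of the two-sided estimate—are essentially automatic once Theorems \ref{teo2} and \ref{teo3} are in hand.
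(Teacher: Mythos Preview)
Your argument is correct and follows the standard route for deducing convergence of infima from $\Gamma$-convergence together with equicoercivity. The paper itself provides no proof of this corollary beyond the single sentence ``directly from Theorems \ref{teo2} and \ref{teo3} we obtain,'' so your three-step outline (attainment via the direct method, upper bound from the recovery sequence, lower bound from compactness plus the $\liminf$ inequality) is in fact more detailed than what the paper supplies.

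One small point worth tightening: Lemma \ref{lem_extra}(ii) as stated in the paper assumes the conditions of Theorem \ref{teo1} at level $n+1$, under which the curvature terms vanish and $\mathcal{I}_{2(n+1)}$ reduces to the pure bending functional; only then is $\mathrm{Ker}\,\mathcal{I}_{2(n+1)}=\{Sy_0+c\}$ literally correct (otherwise the functional is bounded below by $\gamma_n\|\cdot\|_{\mathcal{Q}_2}^2>0$ and has empty zero set). You already identify the fix: what \cite[Theorems 8.2, 8.3]{LL} actually control is the bending operator $V\mapsto (\nabla y_0)^{\trsp}\nabla\vec p + (\nabla V)^{\trsp}\nabla\vec b_1$, and since the remaining terms in $\mathcal{I}_{2(n+1)}$ are constant in $V$, boundedness of $\|L(V_k)+\alpha_n K\|_{\mathcal{Q}_2}$ along a minimizing sequence gives boundedness of $\|L(V_k)\|_{\mathcal{Q}_2}$ by the triangle inequality, and then the coercivity of $L$ modulo its kernel yields the required $W^{2,2}$ bound after normalizing by rigid motions. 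With that clarification your attainment step is complete.
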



\end{document}